\theoremstyle{plain}
\newtheorem{theorem}{Theorem}[section]
\newtheorem{corollary}[theorem]{Corollary}
\newtheorem{lemma}[theorem]{Lemma}
\newtheorem{proposition}[theorem]{Proposition}
\theoremstyle{definition}
\newtheorem{example}[theorem]{Example}
\newtheorem{assumption}[theorem]{Assumption}
\theoremstyle{remark}
\newtheorem{remark}[theorem]{Remark}
\def\Z{{\mathbb Z}}
\def\N{{\mathbb N}}
\def\R{{\mathbb R}}
\def\C{{\mathbb C}}
\def\Q{{\mathbb Q}}
\def\cG{{\mathcal G}}
\def\cH{{\mathcal H}}
\def\cN{{\mathcal N}}
\def\cX{{\mathcal X}}
\def\a{{\boldsymbol a}}
\def\b{{\boldsymbol b}}
\def\e{{\boldsymbol e}}
\def\g{{\boldsymbol g}}
\def\l{{\boldsymbol l}}
\def\m{{\boldsymbol m}}
\def\p{{\boldsymbol p}}
\def\q{{\boldsymbol q}}
\def\s{{\boldsymbol s}}
\def\u{{\boldsymbol u}}
\def\v{{\boldsymbol v}}
\def\w{{\boldsymbol w}}
\def\x{{\boldsymbol x}}
\def\y{{\boldsymbol y}}
\def\z{{\boldsymbol z}}
\def\0{{\boldsymbol 0}}
\def\1{{\boldsymbol 1}}
\def\bbeta{{\boldsymbol{\beta}}}
\def\llambda{{\boldsymbol{\lambda}}}
\def\mmu{{\boldsymbol{\mu}}}
\def\nnu{{\boldsymbol{\nu}}}
\def\supp{{\rm supp}}
\def\nsupp{{\rm nsupp}}
\def\deg{{\rm deg}}
\def\Ker{{\rm Ker}}
\def\ini{{\rm in}}
\def\ind{{\rm ind}}
\def\find{{\rm find}}
\def\rank{{\rm rank}}
\def\vol{{\rm vol}}
\def\NS{{\rm NS}}
\title{Logarithmic $A$-hypergeometric series ${\textrm I}\! {\textrm I}$}
\author{Go Okuyama and Mutsumi Saito}
\keywords{$A$-hypergeometric systems; the method of Frobenius.}
\subjclass[2020]{Primary: 33C70}
\address{
\begin{flushleft}
(Go Okuyama) 
Higher Education Support Center\\
Hokkaido University of Science\\
Sapporo, 006-8585, Japan,\\
(Mutsumi Saito) 
Department of Mathematics, Faculty of Science\\
Hokkaido University\\
Sapporo, 060-0810, Japan\\
\end{flushleft}
}
\email{gokuyama@hus.ac.jp (G.Okuyama), saito@math.sci.hokudai.ac.jp (M.Saito)} 
\begin{document}

\begin{abstract}
  In this paper, following \cite{Log}, we continue to develop the perturbing 
method of constructing logarithmic series solutions 
to a regular $A$-hypergeometric system.

  Fixing a fake exponent of an $A$-hypergeometric system, we consider some spaces of linear partial differential operators with constant coefficients.
  Comparing these spaces, we construct a fundamental system of series solutions with the given exponent by the perturbing method. In addition,
we give a sufficient condition for a given fake exponent to be an exponent.
  As important examples of the main results, we give fundamental systems of series solutions to Aomoto-Gel'fand systems and to Lauricella's $F_C$ systems with special parameter vectors, respectively.
\end{abstract}

\maketitle

\section{Introduction}
Let $A=(\a_1,\ldots, \a_n)=(a_{ij})$ be a $d\times n$-matrix
of rank $d$ with coefficients in $\Z$. 
Throughout this paper,
we assume the homogeneity of $A$, i.e.,
we assume that all $\a_j$ belong to one hyperplane off the origin 
in ${\Q}^d$.
Let ${\N}$ be the set of nonnegative integers.
Let $I_A$ denote the toric ideal in the polynomial ring
${\C}[\partial_\x]={\C}[\partial_1,\ldots,\partial_n]$, i.e.,
\begin{equation}
\label{eqn:ToricIdeal}
I_A=\langle \partial_\x^\u-\partial_\x^\v\,|\, A\u=A\v, \, \u, \v\in {\N}^n
\rangle \subseteq {\C}[\partial_\x].
\end{equation}
Here and hereafter we use the multi-index notation;
for example, $\partial_\x^\u$ means $\partial_1^{u_1}\cdots \partial_n^{u_n}$
for $\u=(u_1,\ldots,u_n)^T$.
Given a column vector $\bbeta=(\beta_1,\ldots,\beta_d)^T\in {\C}^d$,
let $H_A(\bbeta)$ denote the left ideal of the Weyl algebra
\begin{equation}
D={\C}\langle\x,\partial_\x\rangle={\C}\langle x_1,\ldots, x_n,\partial_1,\ldots,
\partial_n\rangle
\end{equation}
generated by $I_A$ and 
\begin{equation}
\label{eqn:EulerOperators}
\sum_{j=1}^n a_{ij}\theta_{j} -\beta_i\qquad
(i=1,\ldots, d),
\end{equation}
where $\theta_j =x_j\partial_j$.
The quotient $M_A(\bbeta)=D/H_A(\bbeta)$ is called
the {\it $A$-hypergeometric system with parameter $\bbeta$},
and a formal series annihilated by $H_A(\bbeta)$
an {\it $A$-hypergeometric series with parameter $\bbeta$}.
The homogeneity of $A$ is known to be equivalent to
the regularity of $M_A(\bbeta)$ by Hotta \cite{Hotta} and
Schulze, Walther \cite{SW}.

Logarithm-free series solutions to $M_A(\bbeta)$ 
were constructed by Gel'fand et al. \cite{GGZ,GZK2}
for a generic parameter $\bbeta$, and
more generally in \cite{SST}.

Note that the logarithmic coefficients of $A$-hypergeometric series solutions are polynomials of $\log x^\b$ $(\b\in L)$ \cite[Proposition 5.2]{LogFree},
where
\begin{equation}
L:=\Ker_\Z(A)=
\{ \u\in \Z^n\,|\, A\u=\0\}.
\end{equation}
To construct logarithmic series solutions,
the second author \cite{Log} introduced a method of perturbation 
by a finite subset $B=\{\b^{(1)},\ldots,\b^{(h)}\}\subset L$, and
explicitly described logarithmic series solutions for a fake exponent and a set $B$ that satisfy certain conditions \cite[Theorems 5.4, 6.2 and Remarks 5.6, 6.3]{Log}.

In this paper, following \cite{Log}, we continue to develop the perturbing 
method of constructing logarithmic series solutions 
to a regular $A$-hypergeometric system.

Fixing a fake exponent of an $A$-hypergeometric system, we consider some spaces of linear partial differential operators with constant coefficients.
Comparing these spaces, we construct a fundamental system of series solutions with the given exponent by the perturbing method. 
In addition,
we give a sufficient condition for a given fake exponent to be an exponent.
As important examples of the main results, we give fundamental systems of series solutions to Aomoto-Gel'fand systems and to Lauricella's $F_C$ systems with special parameter vectors, respectively.

This paper is organized as follows. 
In Section 2, we first recall a power series to perturb from \cite{Log}, 
associated with a fake exponent $\v$ and a linearly independent subset $B$ of 
$L$. 
In particular, we discuss properties of each term $a_\u(\s)$ appearing in the series (for the definition of $a_\u(\s)$, see \eqref{eqn:termtoperturb}), and modify the series by changing the range of the sum from $\NS_\w(\v)$ in \cite{Log} to $\cN$ which incorporates $B$.
We give a refinement of \cite[Theorem 6.2]{Log} as Theorem \ref{thm:refinement}.

In Section 3, for a fake exponent $\v$ of the $A$-hypergeometric ideal $H_A(\bbeta)$ with respect to a generic weight vector $\w$, we recall the structure of the ideal $Q_\v$ associated with the fake indicial ideal $\find_\w(H_A(\bbeta))$ and that of its orthogonal complement $Q_\v^\perp$ defined in \cite[Sections 2.3 and 3.6]{SST}.
We introduce ideals $P_\cN$ and $P_B$ of $\C[\s]$, and their orthogonal complements $P_\cN^\perp$ and $P_B^\perp$.
Then we discuss relations among these ideals.
Under a certain condition, we can derive $Q_\v^\perp$ as the image of a linear map from $P_\cN^\perp$ (Proposition \ref{prop:Key2}, Theorem \ref{thm:Pperp Qperp}).

In Section 4, we give a sufficient condition for a fake exponent $\v$ to be an exponent (Proposition \ref{prop:exponent}).
Then we construct a fundamental system of solutions with the exponent $\v$ (Theorem \ref{thm:PBKandQperp})
by applying Theorem \ref{thm:refinement} and the results of Section 3 under the condition that $B$ is a basis of $L$,
which is the main theorem of this paper.

In Sections 5 and 6, we deal with the Aomoto-Gel'fand systems and Lauricella's $F_C$ systems, which are important examples of $H_A(\bbeta)$.
We discuss a fundamental system of solutions to $H_A(\0)$ in each system. 
In each case, we have a unique fake exponent $\v=\0$. 
Taking a basis $B$ of $L$, we can obtain a fundamental system of series solutions for $\bbeta=\0$.

\section{Refinement of \cite[Theorem 6.2]{Log}}

In this section,
we refine \cite[Theorem 6.2]{Log}.

Recall that for $\v=(v_1,\ldots,v_n)^T\in \C^n$ its support $\supp(\v)$ and 
its {\it negative support} $\nsupp(\v)$
are defined as
\begin{align}
  \supp(\v) &:=\{ j\in \{ 1,\ldots,n\} \,|\, v_j\ne 0\},\\
  \nsupp(\v)&:=\{ j\in \{ 1,\ldots,n\} \,|\, v_j\in \Z_{<0}\},
\end{align}
respectively.

For $\v\in \C^n$ and $\u\in \N^n$, set
\begin{equation}
[\v]_{\u}:= \prod_{j=1}^n v_j(v_j-1)\cdots(v_j-u_j+1).
\end{equation}
Here recall that $\N=\{ 0,1,2,\cdots\}$.

Note that we can uniquely write $\u\in \Z^n$ as the sum
$\u=\u_+-\u_-$ with $\u_+,\u_-\in \N^n$ and
$\supp(\u_+)\cap \supp(\u_-)=\emptyset$.

Let $B=\{\b^{(1)},\ldots, \b^{(h)}\}\subset L$. 
We write the same symbol $B$ for the $n\times h$ matrix $(\b^{(1)},\ldots, \b^{(h)})$.

Set
\begin{equation}
\supp(B):=\bigcup_{k=1}^h\supp(\b^{(k)})\subset \{1,\ldots,n\},
\end{equation}
which means the set of all labels for nonzero rows in $B$.

Let $\s=(s_1,\ldots,s_h)^T$ be indeterminates, and  
let
\[
  (B\s)_j:=\sum_{k=1}^h b_{j}^{(k)}s_k\in \C[\s]:=\C[s_1,\ldots,s_h]
\]
for $j=1,\ldots,n$. 
Set
\begin{equation}
  (B\s)^J:=\prod_{j\in J}(B\s)_j\in \C[\s]
\end{equation}
for $J\subset \{1,\ldots,n\}$.
Note that $(B\s)_j=0$ if $j\notin \supp(B)$, hence we have $(B\s)^J=0$ if $J\not\subset \supp(B)$.
\begin{lemma}
\label{lem:denominonzero-2}
Let $B=\{ \b^{(1)},\ldots, \b^{(h)}\}\subset L$, $\u, \u'\in L$ and $\v\in{\C}^{n}$. 
Let $\s=(s_1,\ldots,s_h)^T$ be indeterminates.
Then $[\v+B\s+\u]_{\u'_+}\neq 0$ if and only if $\nsupp(\v+\u-\u')\subset \supp(B)\cup\nsupp(\v+\u)$.
In particular, $[\v+B\s+\u]_{\u_+}\neq 0$ 
if and only if
$\nsupp(\v)\subset \supp(B)\cup \nsupp(\v+\u)$.
\end{lemma}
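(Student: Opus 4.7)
The plan is to expand the Pochhammer-like symbol $[\v+B\s+\u]_{\u'_+}$ as an explicit product of linear polynomials in $\s$:
\[
[\v+B\s+\u]_{\u'_+} \;=\; \prod_{j=1}^{n}\prod_{k=0}^{(u'_+)_j-1} \bigl(v_j + (B\s)_j + u_j - k\bigr).
\]
Since $\C[\s]$ is an integral domain, this product is zero as an element of $\C[\s]$ if and only if at least one of the linear factors vanishes identically in $\s$. So the first step is to characterize exactly when one factor $v_j+(B\s)_j+u_j-k$ is the zero polynomial: this requires (i) that the $\s$-linear part $(B\s)_j$ vanishes identically, i.e.\ $j \notin \supp(B)$, and (ii) that the constant $v_j+u_j-k$ equals $0$. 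Combining these, $[\v+B\s+\u]_{\u'_+}=0$ if and only if there exists $j\notin \supp(B)$ with $v_j+u_j \in \{0,1,\ldots,(u'_+)_j-1\}$.

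The second step is to reformulate this vanishing criterion in terms of $\nsupp$. I would handle this by a short case analysis on the sign of $u'_j$ for each $j$. If $u'_j \le 0$ then $(u'_+)_j = 0$, so the set $\{0,\ldots,(u'_+)_j-1\}$ is empty and the criterion contributes nothing; correspondingly, in this case $v_j+u_j-u'_j \in \Z_{<0}$ already forces $v_j+u_j \in \Z_{<0}$ (since $-u'_j \ge 0$), so $j\in\nsupp(\v+\u-\u')$ implies $j\in\nsupp(\v+\u)$ automatically. If $u'_j>0$ then $(u'_+)_j=u'_j$, and one checks directly that $v_j+u_j \in \{0,1,\ldots,u'_j-1\}$ is equivalent to the conjunction $v_j+u_j-u'_j\in\Z_{<0}$ and $v_j+u_j\notin\Z_{<0}$, i.e.\ $j\in\nsupp(\v+\u-\u')\setminus\nsupp(\v+\u)$.

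Putting the two cases together, the existence of an index $j\notin\supp(B)$ violating the inequality translates precisely into $\nsupp(\v+\u-\u')\not\subset \supp(B)\cup\nsupp(\v+\u)$. Taking the contrapositive delivers the stated equivalence, and setting $\u'=\u$ gives the special case at once since then $\v+\u-\u'=\v$ and $\u'_+=\u_+$.

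The argument is essentially an integral-domain factor-by-factor vanishing analysis; the only part that needs care is the bookkeeping in the case split on the sign of $u'_j$, where one must verify that the ``vacuous'' sides of the equivalence really do match up when $u'_j\le 0$. I expect no deeper obstacle than this matching check.
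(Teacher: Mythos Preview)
Your proposal is correct and follows essentially the same approach as the paper: expand the falling factorial as a product of linear polynomials in $\s$, use that $\C[\s]$ is a domain to reduce to the vanishing of a single factor, and translate the resulting condition into the $\nsupp$ inclusion. Your write-up is simply more explicit than the paper's (which compresses the case split on the sign of $u'_j$ into a single line by observing that the conditions $v_j+u_j\in\N$ and $v_j+u_j-u'_j\in\Z_{<0}$ automatically force $u'_j>0$), but the logic is identical.
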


\begin{proof}
Note that 
\begin{align}
  &[\v+B\s+\u]_{\u'_+}\\
  &\quad= \prod_{j; u'_j>0}(v_j+(B\s)_j+u_j)\cdots(v_j+(B\s)_j+u_j-u'_j+1).
\end{align}
Hence, $[\v+B\s+\u]_{\u'_+}= 0$ if and only if
there exists $j$ such that
$v_j+u_j-u'_j\in \Z_{<0}$, $v_j+u_j\in\N$, and $b^{(k)}_j=0$ for all $k$.

Hence $[\v+B\s+\u]_{\u'_+}= 0$
if and only if
$\nsupp(\v+\u-\u')\not\subset \supp(B)\cup \nsupp(\v+\u)$.
\end{proof}

Let $\w$ be a generic weight.
Recall that $\v$ is called a {\it fake exponent}
of $H_A(\bbeta)$ with respect to $\w$
if
$A\v=\bbeta$ and
$[\v]_{\u_+}=0$ for all
$\u\in L$ with $\u_+\cdot\w>\u_-\cdot\w$,
where
$\u \cdot\w=\sum_{j=1}^n u_j w_j$.

Throughout this paper, fix a generic weight $\w$, a fake exponent $\v$ of $H_{A}(\bbeta)$ with respect to $\w$. 
In addition, we assume the following for a subset $B=\{\b^{(1)},\ldots, \b^{(h)}\}\subset L$.
\begin{assumption}
  \label{ass3}
  A subset $B=\{\b^{(1)},\ldots,\b^{(h)}\}\subset L$ is linearly independent, hence $\rank(B)=h$, and satisfies
\begin{equation}
  \nsupp(\v)\subset \supp(B)\cup \nsupp(\v+\u)
\end{equation}
for any $\u\in L$.
\end{assumption}

\begin{remark}\label{rem:rem_to_ass3}
  \begin{enumerate}
    \item 
    If $B$ satisfies $\nsupp(\v)\subset\supp(B)$, then Assumption \ref{ass3} holds. 
  For example, this condition holds for each of the following cases: 
  \begin{enumerate}
    \item[(i)] $\supp(B)=\{1,\ldots,n\}$,
    \item[(ii)] $\nsupp(\v)=\emptyset$.
  \end{enumerate}
  \item If $B$ is a basis of $L$, then
  $B$ satisfies
  \begin{equation}
    \supp(B)\cup\nsupp(\v+\u)=\supp(B)\cup \nsupp(\v)
  \end{equation}
  for all $\u\in L$.
  Indeed, since $B$ is a basis of $L$, we see that if $j\notin\supp(B)$ then $\u_j=0$ for all $\u\in L$.
  This implies that $\nsupp(\v+\u)\setminus\supp(B)=\nsupp(\v)\setminus\supp(B)$ for all $\u\in L$. 
  \end{enumerate}
\end{remark}

We abbreviate $\nsupp(\v+\u)$ to $I_\u$ for $\u\in L$.
In particular, $I_\0=\nsupp(\v)$.
Assumption \ref{ass3} can be rewritten as
\begin{equation}
  I_\0\subset \supp(B)\cup I_{\u}
\end{equation}
for all $\u\in L$.

For $\u\in L$, let
\begin{equation}\label{eqn:termtoperturb}
  a_\u(\s):=\frac{[\v+B\s]_{\u_-}}{[\v+B\s+\u]_{\u_+}}.
\end{equation}
\begin{lemma}
\label{lem:supp(b)-2}
Let $\u, \u'\in L$. Under Assumption \ref{ass3}, the following hold.
\begin{enumerate}
\item[(i)]\label{lem:supp(b)-2-(i)}
$a_\u(\s)\neq 0$
if and only if
$I_\u\subset \supp(B)\cup I_\0$, if and only if
$\supp(B)\cup I_\u= \supp(B)\cup I_\0$.
\item[(ii)]\label{lem:supp(b)-2-(ii)}
If $I_\u\cup I_{\u-\u'}\not\subset \supp(B)\cup I_\0$, then
$\partial^{\u'_+}(a_\u(\s)x^{\v+B\s+\u})=0$.
\end{enumerate}
\end{lemma}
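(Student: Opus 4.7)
The plan is to deduce both parts directly from Lemma \ref{lem:denominonzero-2} together with Assumption \ref{ass3}. The key initial observation is that Assumption \ref{ass3}, which reads $I_\0\subset \supp(B)\cup I_\u$ for every $\u\in L$, is exactly the criterion that Lemma \ref{lem:denominonzero-2} provides for the denominator $[\v+B\s+\u]_{\u_+}$ of $a_\u(\s)$ to be a nonzero polynomial in $\s$ (apply the lemma with $\u\leftarrow\u$, $\u'\leftarrow\u$). Hence $a_\u(\s)$ is always well-defined as an element of $\C(\s)$, and whether it vanishes is controlled solely by the numerator.

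For part (i), I would apply Lemma \ref{lem:denominonzero-2} a second time, now with $\u\leftarrow \0$ and $\u'\leftarrow -\u$ so that $\u'_+=\u_-$, to the numerator $[\v+B\s]_{\u_-}$. The conclusion reads $\nsupp(\v+\u)\subset \supp(B)\cup \nsupp(\v)$, i.e.\ $I_\u\subset \supp(B)\cup I_\0$, which gives the first equivalence. The second equivalence follows by combining this containment with Assumption \ref{ass3} (which gives the reverse containment $I_\0\setminus\supp(B)\subset I_\u$), yielding the set equality $\supp(B)\cup I_\u=\supp(B)\cup I_\0$; the converse is immediate.

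For part (ii), I would compute
\[
\partial^{\u'_+}\bigl(a_\u(\s)\, x^{\v+B\s+\u}\bigr)=a_\u(\s)\,[\v+B\s+\u]_{\u'_+}\, x^{\v+B\s+\u-\u'_+}
\]
and then split on the hypothesis $I_\u\cup I_{\u-\u'}\not\subset \supp(B)\cup I_\0$. If $I_\u\not\subset \supp(B)\cup I_\0$, then $a_\u(\s)=0$ by part (i) and we are done. Otherwise $I_\u\subset \supp(B)\cup I_\0$, and the hypothesis forces $I_{\u-\u'}\not\subset \supp(B)\cup I_\0$. Here the symmetric reformulation in part (i), namely $\supp(B)\cup I_\u=\supp(B)\cup I_\0$, becomes crucial: it lets us rewrite the last non-containment as $I_{\u-\u'}\not\subset \supp(B)\cup I_\u$, which by one more invocation of Lemma \ref{lem:denominonzero-2} (with $\u\leftarrow \u$, $\u'\leftarrow \u'$) gives $[\v+B\s+\u]_{\u'_+}=0$, killing the whole expression.

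No genuine obstacle is expected; the proof amounts to careful bookkeeping of which substitutions to make into Lemma \ref{lem:denominonzero-2}. The one mildly delicate point is the use of the symmetric equality in part (i) to bridge the two cases of part (ii); without Assumption \ref{ass3}, the hypothesis on $I_{\u-\u'}$ would not translate into vanishing of the relevant Pochhammer factor.
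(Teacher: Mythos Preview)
Your proposal is correct and follows essentially the same approach as the paper's proof: both observe that Assumption~\ref{ass3} makes the denominator of $a_\u(\s)$ nonzero via Lemma~\ref{lem:denominonzero-2}, then apply that lemma again to the numerator for (i), and for (ii) compute the derivative and split into the two cases $I_\u\not\subset\supp(B)\cup I_\0$ and $I_{\u-\u'}\not\subset\supp(B)\cup I_\0$. The only cosmetic difference is that in the second case of (ii) you invoke the equality $\supp(B)\cup I_\u=\supp(B)\cup I_\0$ from (i) to rewrite the non-containment, whereas the paper argues directly with $I_{\u-\u'}\setminus I_\u$; these are equivalent bookkeeping moves.
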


\begin{proof}
First, the denominator of $a_\u(\s)$ is not zero 
by Lemma \ref{lem:denominonzero-2}.

(i)\ 
We have
$[\v+B\s]_{\u_-}=0$
if and only if
there exists
$j$ such that
$[v_j+\sum_{k=1}^hs_kb^{(k)}_j]_{-u_j}=[v_j]_{-u_j}=0$,
namely
$v_j\in \N,
v_j+u_j\in \Z_{<0}$, and $b_j^{(k)}=0$ for all $k$.
Hence it is equivalent to saying that
there exists
$j$ such that
$j\in I_\u\setminus(\supp(B)\cup I_\0)$,
or
$I_\u\not\subset \supp(B)\cup I_\0$.

By the assumption, the inclusion
$I_\u\subset \supp(B)\cup I_\0$
is equivalent to the equality
$\supp(B)\cup I_\u= \supp(B)\cup I_\0$.

(ii)\ 
Note that
\begin{equation}
\partial^{\u'_+}(a_\u(\s)x^{\v+B\s+\u})
=a_\u(\s)[\v+B\s+\u]_{\u'_+}x^{\v+B\s+\u-\u'_+}.
\end{equation}
Hence, if $I_\u\not\subset \supp(B)\cup I_\0$, then
$\partial^{\u'_+}(a_\u(\s)x^{\v+B\s+\u})=0$ by (\textrm{i}).

Suppose that $I_{\u-\u'}\setminus I_\u \not\subset \supp(B)\cup I_\0$.
Then, by Assumption \ref{ass3}, $I_{\u-\u'}\setminus (\supp(B)\cup I_\u)\ne\emptyset$.
Hence, by Lemma \ref{lem:denominonzero-2}, $[\v+B\s+\u]_{\u'_+}=0$
and we have
$\partial^{\u'_+}(a_\u(\s)x^{\v+B\s+\u})=0$.
\end{proof}

We recall the definitions related to $\NS_\w(\v)$
for a fake exponent $\v$ from \cite{Log} and modify them.

Let
\begin{equation}
  \cG:=\left\{ {\partial^{\g^{(i)}_+}}-\partial^{\g^{(i)}_-}\,\bigl|\, i=1,\ldots, m \right\}
\end{equation}
denote the reduced Gr\"{o}bner basis of $I_A$ with respect to $\w$ with
$\partial^{\g^{(i)}_+}\in \ini_\w (I_A)$ for all $i$. Note that the $\cG$ in \cite[Section 4]{Log} should be {\it the reduced} Gr\"{o}bner basis. 
Set
\begin{equation}
C(\w):=\sum_{i=1}^m \N \g^{(i)}.
\end{equation}

A collection $\NS_\w(\v)$ of negative supports $I_\u$ ($u\in L$) is defined by 
\begin{equation}
\NS_\w(\v):=
\left\{ I_\u\,\middle|\, \u\in L.\  
\text{If $I_{\u}=
I_{\u'}$ for $\u'\in L$, then $\u'\in C(\w)$.}
\right\}.
\end{equation}
In addition, define
\begin{equation}
  \NS_\w(\v)^c:=
  \left\{ I_\u\,\middle|\, \u\in L\right\}\setminus \NS_\w(\v).
\end{equation}
We modify the definition of $\NS_{\w}(\v)$ under Assumption \ref{ass3}.
Define 
\begin{equation}
  \label{eqn:rangeofterms}
  \cN:=\{I\in\NS_\w(\v)\,|\,\supp(B)\cup I=\supp(B)\cup I_\0\},
\end{equation}
and set 
\begin{equation}
  \cN^c:=\{I_\u\,|\,\u\in L, \supp(B)\cup I_\u=\supp(B)\cup I_\0\}\setminus \cN.
\end{equation}
Consider the subset $L'$ of $L$ defined by  
\begin{equation}
\label{eqn:L'}
  L':=\{\u\in L\,|\,I_{\u}\in \cN\}.
\end{equation}
By definition, we see that $L'\subset C(\w)$.

Let
\begin{equation}
  K_{\cN}:=\bigcap_{I\in \cN}I,
\end{equation}
and define the homogeneous ideal $P_\cN$ of $\C[\s]$ for $\cN$ as
\begin{equation}\label{eqn:P_cN}
  P_\cN:=
\left\langle
(B\s)^{I\cup J\setminus K_\cN}\,\biggl|\,I\in \cN,
J\in \cN^c \right\rangle.
\end{equation}
In addition, we define the orthogonal complement $P^\perp$ for a homogeneous ideal $P\subset \C[\s]$ as
\begin{align}
  \label{eqn:defofPprep}
P^\perp&:=\{q(\partial_\s)\in \C[\partial_\s]\,|\,(q(\partial_\s)\bullet h(\s))|_{\s=\0}=0\ \text{for all}\ h(\s)\in P\}\\
&=\{q(\partial_\s)\in \C[\partial_\s]\,|\,q(\partial_\s)\bullet P\subset \langle s_1,\ldots, s_h\rangle\},
\end{align}
where $\C[\partial_\s]:=\C[\partial_{s_1},\ldots,\partial_{s_h}]$.
Since $P$ and $\langle s_1,\ldots, s_h\rangle$ are both homogeneous, $P^\perp$ is homogeneous with respect to the usual total ordering.

\begin{example}(cf.\,\cite[Examples 3.3, 4.8, 6.4]{Log})
  \label{ex:SST-ex3.5.2_(1)}
  Let $n=5$, $d=3$, and 
  \begin{equation}
    A=\begin{bmatrix} 1 & 1 & 1 & 1 & 1 \\ -1 & 1 & 1 & -1 & 0 \\ -1 & -1 & 1 & 1 & 0 \end{bmatrix}.
  \end{equation}
  Let $\bbeta=(1,0,0)^T$ and $\w=(1, 1, 1, 1, 0)$.
  Then $\v=(0,0,0,0,1)^T$ is a unique exponent, and
  \begin{equation}
    \cG=\{\underline{\partial_{x_1}\partial_{x_3}}-\partial_{x_5}^2,\underline{\partial_{x_2}\partial_{x_4}}-\partial_{x_5}^2\},
  \end{equation}
  where the underlined terms are the leading ones.
  Put $\g^{(1)}:=(1,0,1,0,-2)^T$ and $\g^{(2)}:=(0,1,0,1,-2)^T$.
  Recall that
  \begin{align}
    \NS_\w(\v)&=\{\emptyset=I_\0, \{5\}\},\\
    \NS_\w(\v)^c&=\{\{1,3\},\{2,4\},\{1,3,5\},\{2,4,5\},\{1,2,3,4\}\}.
  \end{align}
  Let $B:=\{\g^{(1)}, \g^{(2)}\}$. 
  Then we have $\supp(B)=\{1,2,3,4,5\}$, and
  \begin{align}
    \cN&=\NS_\w(\v),\\ 
    \cN^c&=\NS_\w(\v)^c,\\ 
    K_\cN&=\emptyset.
  \end{align}
  The homogeneous ideal $P_\cN\subset \C[\s]=\C[s_1,s_2]$ and the vector space $P_\cN^\perp\subset \C[\partial_\s]=\C[\partial_{s_1},\partial_{s_2}]$ are given as
  \begin{align}
    P_\cN&=\langle (B\s)^{\{1,3\}},(B\s)^{\{2,4\}}\rangle=\langle s_1^2, s_2^2\rangle,\\ 
    P_\cN^\perp &= \{q(\partial_{s_1},\partial_{s_2})\in \C[\partial_{s_1},\partial_{s_2}]\,|\, q(\partial_{s_1},\partial_{s_2})\bullet \langle s_1^2, s_2^2\rangle \subset \langle s_1, s_2\rangle\}\\
    &=\C 1+\C \partial_{s_1}+\C \partial_{s_2}+\C\partial_{s_1}\partial_{s_2}.
  \end{align}
  We consider another case.
  Let $B_1=\{\g^{(1)}\}$. 
  Then we have $\supp(B_1)=\{1,3,5\}$ and
  \begin{align}
    \cN_1&=\NS_\w(\v)=\{\emptyset=I_\0, \{5\}\},\\ 
    \cN_1^c&=\{\{1,3\},\{1,3,5\}\},\\ 
    K_{\cN_1}&=\emptyset.
  \end{align}
  The homogeneous ideal $P_{\cN_1}\subset \C[s]$ and the vector space $P_{\cN_1}^\perp\subset \C[\partial_s]$ are given as
  \begin{align}
    P_{\cN_1} &=\langle (Bs)^{\{1,3\}}\rangle=\langle s^2\rangle,\\ 
    P_{\cN_1}^\perp &= \{q(\partial_s)\in \C[\partial_s]\,|\, q(\partial_s)\bullet \langle s^2\rangle \subset \langle s\rangle\}=\C 1+\C \partial_s.
  \end{align}
\end{example}

\vspace{24pt}
Throughout this paper, put
\begin{equation}
  m(\s):=(B\s)^{I_\0\setminus K_\cN}.
\end{equation}
The following lemma guarantees that we may plug $\s=\0$ into the series appearing in Theorem \ref{thm:refinement}.

\begin{lemma}\label{lem:expansion}
  Let $\cN$ be the one defined by \eqref{eqn:rangeofterms}, and let $\u, \u'\in L$.
  Then, under Assumption \ref{ass3}, each term of the power series  for $m(\s)\cdot a_\u(\s)\cdot [\v+B\s+\u]_{\u'_+}$ in the indeterminates $\s$ is divided by $(B\s)^{I_\u\cup I_{\u-\u'}\setminus K_\cN}$.  
\end{lemma}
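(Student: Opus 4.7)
The plan is to reduce the claim to a valuation computation along each linear form $(B\s)_j$, and then verify the resulting inequality by a finite case analysis in a handful of indicator variables.

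First, I would invoke Lemma \ref{lem:supp(b)-2}(ii): whenever $I_\u \cup I_{\u-\u'} \not\subset \supp(B) \cup I_\0$, the product $a_\u(\s) \cdot [\v+B\s+\u]_{\u'_+}$ vanishes identically and the claim is trivial. I may therefore assume $I_\u \cup I_{\u-\u'} \subset \supp(B) \cup I_\0$, and Assumption \ref{ass3} then upgrades this to $\supp(B) \cup I_\u = \supp(B) \cup I_{\u-\u'} = \supp(B) \cup I_\0$, which in particular places $I_\u$ and $I_{\u-\u'}$ in $\cN \cup \cN^c$.

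Next, I would read off, for each $j$, the $(B\s)_j$-adic orders of vanishing at $\s=\0$ of the four ingredients $(B\s)^{I_\0\setminus K_\cN}$, $[\v+B\s]_{\u_-}$, $[\v+B\s+\u]_{\u'_+}$, and $[\v+B\s+\u]_{\u_+}$. Each Pochhammer-type factor is an explicit product of linear forms of shape $v_j + (B\s)_j + r$, at most one of which vanishes at $\s=\0$ (precisely when $r = -v_j$), so the arithmetic conditions match membership in $I_\0$, $I_\u$, $I_{\u-\u'}$, or $I_\0 \setminus I_\u$, producing the respective multiplicities $[j\in I_\0\setminus K_\cN]$, $[j\in I_\u\setminus I_\0]$, $[j\in I_{\u-\u'}\setminus I_\u]$, and $[j\in I_\0\setminus I_\u]$.

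Combining these, the net $(B\s)_j$-adic valuation of $m(\s) \cdot a_\u(\s) \cdot [\v+B\s+\u]_{\u'_+}$ is the sum of the three numerator multiplicities minus the denominator's, and a short case check in the indicators $[j\in I_\0]$, $[j\in I_\u]$, $[j\in I_{\u-\u'}]$, $[j\in K_\cN]$ shows this is at least $[j\in (I_\u\cup I_{\u-\u'})\setminus K_\cN]$. Aggregating over the distinct prime linear forms among the $(B\s)_j$ then yields both the claimed $(B\s)^{I_\u\cup I_{\u-\u'}\setminus K_\cN}$ divisibility and the absence of a pole at $\s=\0$, so that the expression is genuinely a power series as stated.

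The main obstacle is the sub-case $j \in K_\cN$: here $K_\cN\subset I_\0$ forces $j \in I_\0$, so the denominator supplies a factor of $(B\s)_j$ that must be cancelled in the numerator. My plan is to exploit Assumption \ref{ass3}, which gives $I_\0 \setminus \supp(B) \subset K_\cN$, so that the remaining worry is $j \in K_\cN \cap \supp(B)$; in that range, the defining property $K_\cN = \bigcap_{I\in\cN} I$ combined with the non-triviality reduction (which matches the support of $I_\u$ and $I_{\u-\u'}$ to that of $I_\0$) should force $j \in I_\u \cap I_{\u-\u'}$, thereby supplying the compensating numerator factor. This is the step that most directly ties the definition of $K_\cN$ to the rest of the structure of the argument.
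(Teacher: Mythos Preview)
Your approach is essentially the paper's, carried out factor by factor rather than by citing \cite[Lemma~6.1]{Log}. The paper invokes that lemma to write
\[
a_\u(\s)\,[\v+B\s+\u]_{\u'_+}=\frac{(B\s)^{(I_\u\cup I_{\u-\u'})\setminus I_\0}}{(B\s)^{I_\0\setminus (I_\u\cup I_{\u-\u'})}}\cdot g\bigl((B\s)_1,\ldots,(B\s)_n\bigr)
\]
with $g$ a power series, then multiplies by $m(\s)=(B\s)^{I_\0\setminus K_\cN}$ and collapses the monomial part by one line of set algebra to $(B\s)^{(I_\u\cup I_{\u-\u'})\setminus K_\cN}\cdot g$. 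Your $(B\s)_j$-adic bookkeeping reproduces precisely these exponents, so the two arguments run in parallel; the paper's version is shorter because the factor analysis is outsourced, and because it works index by index in $j$ (the notation $(B\s)^S$ already keeps repeated linear forms separate), so the ``distinct prime linear forms'' aggregation you introduce is an unnecessary complication.

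Your resolution of the sub-case $j\in K_\cN\cap\supp(B)$ does not close as written. The non-triviality reduction yields only $\supp(B)\cup I_\u=\supp(B)\cup I_\0$, which places $I_\u$ in $\cN\cup\cN^c$; but $K_\cN=\bigcap_{I\in\cN}I$ forces $j\in I_\u$ only when $I_\u\in\cN$, not when $I_\u\in\cN^c$. Concretely, with $d=[j\in I_\u\cup I_{\u-\u'}]$ and $k=[j\in K_\cN]$, your net $(B\s)_j$-order is $d-k$ while the target order is $d(1-k)$, and these disagree precisely when $k=1$, $d=0$, leaving a pole rather than a power series. The paper's final set identity tacitly uses $K_\cN\subset I_\u\cup I_{\u-\u'}$ at the same juncture without verifying it. In the only application (Theorem~\ref{thm:refinement}) one has $\u\in L'$, hence $I_\u\in\cN$ and $K_\cN\subset I_\u$, which removes the obstruction; if you intend your argument to cover the lemma for arbitrary $\u\in L$ as stated, the step needs more than the reduction you propose.
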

\begin{proof}
  By \cite[Lemma 6.1]{Log},
  there exists a formal power series $g(\y)$ in the indeterminates $\y=(y_1,\ldots,y_n)$ such that
  \begin{align}
    a_\u(\s)&\cdot [\v+B\s+\u]_{\u'_+}\\
    &=\frac{[\v+B\s]_{\u_-}}{[\v+B\s+\u]_{\u_+}}\cdot [\v+B\s+\u]_{\u'_+}\\
    &=\left(\frac{(B\s)^{I_\u\setminus I_\0}}{(B\s)^{I_\0\setminus I_\u}}\cdot (B\s)^{I_{\u-\u'}\setminus I_\u}\right)\cdot g((B\s)_1,\ldots,(B\s)_n)\\
    &=\frac{(B\s)^{(I_\u\cup I_{\u-\u'})\setminus I_\0}}{(B\s)^{I_\0\setminus (I_\u\cup I_{\u-\u'})}}\cdot g((B\s)_1,\ldots,(B\s)_n).
  \end{align}
Hence we have 
\begin{align}
  m(\s)&\cdot a_\u(\s)\cdot [\v+B\s+\u]_{\u'_+}\\
  &=(B\s)^{I_\0\setminus K_\cN}\cdot \frac{(B\s)^{(I_\u\cup I_{\u-\u'})\setminus I_\0}}{(B\s)^{I_\0\setminus (I_\u\cup I_{\u-\u'})}}\cdot g((B\s)_1,\ldots,(B\s)_n)\\
  &=(B\s)^{I_\u\cup I_{\u-\u'}\setminus K_\cN}\cdot g((B\s)_1,\ldots,(B\s)_n),
\end{align}
and the assertion holds.
\end{proof}

We can refine the main results \cite[Theorem 5.4, Theorem 6.2]{Log} as follows.
\begin{theorem}
\label{thm:refinement}
Let $\cN$ be the one defined by \eqref{eqn:rangeofterms}.
Set
\begin{equation}\label{eqn:seriestoperturb}
F_{\cN}(\x,\s)
:=\sum_{\u\in L'}
a_\u(\s)x^{\v+B\s+\u},
\end{equation}
and
\begin{equation}
\widetilde{F}_\cN(\x,\s)
:=m(\s)F_\cN(\x,\s),
\end{equation}
where $L'$ is defined by \eqref{eqn:L'}.

Then $(q(\partial_{s})\bullet
\widetilde{F}_\cN(\x,\s))_{|\s=\0}$
are solutions to $M_A(\bbeta)$ for any $q(\partial_\s)\in P_\cN^\perp$.
\end{theorem}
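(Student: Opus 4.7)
The plan is to reduce the claim to establishing that, for every generator of $H_A(\bbeta)$, applying it to $\widetilde{F}_\cN(\x,\s)$ produces a formal series in $\x$ whose coefficients---viewed as power series in $\s$---lie termwise inside the homogeneous ideal $P_\cN\subset\C[\s]$. Granting this, for $q(\partial_\s)\in P_\cN^\perp$ the defining relation $q(\partial_\s)\bullet P_\cN\subset\langle s_1,\ldots,s_h\rangle$ sends each such coefficient into the maximal ideal, so evaluating at $\s=\0$ kills it. Since $q(\partial_\s)$ commutes with both $H_A(\bbeta)$ and the evaluation map, $(q(\partial_\s)\bullet\widetilde{F}_\cN)|_{\s=\0}$ is then annihilated by $H_A(\bbeta)$ and is therefore a solution of $M_A(\bbeta)$.

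Since $H_A(\bbeta)$ is generated by the Euler operators together with the reduced Gr\"obner basis $\cG$ of $I_A$, I would treat these two families separately. Each Euler operator acts termwise: for $\u\in L$ one has
\begin{equation}
  A(\v+B\s+\u)\;=\;A\v+(AB)\s+A\u\;=\;\bbeta
\end{equation}
because $A\v=\bbeta$, the columns of $B$ lie in $L$ so $AB=0$, and $A\u=0$, so every summand $m(\s)a_\u(\s)x^{\v+B\s+\u}$ of $\widetilde{F}_\cN$ is killed. For a toric binomial $\partial^{\g_+}-\partial^{\g_-}\in\cG$ (with $\g\in L$), I would apply it to $\widetilde{F}_\cN$, shift the summation index so that both partial sums are indexed by the common exponent $\v+B\s+\u-\g_-$, and isolate the coefficient
\begin{equation}
c_\u(\s)\;=\;m(\s)\bigl(a_{\u+\g}(\s)[\v+B\s+\u+\g]_{\g_+}\,\1_{L'}(\u+\g)-a_\u(\s)[\v+B\s+\u]_{\g_-}\,\1_{L'}(\u)\bigr),
\end{equation}
where $\1_{L'}$ denotes the indicator of $L'$.

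The work then splits on the pair $(\u,\u+\g)$. If both lie in $L'$, a coordinate-by-coordinate factorial computation using $\g=\g_+-\g_-$ and the definition of $a_\u(\s)$ yields the classical identity $a_{\u+\g}(\s)[\v+B\s+\u+\g]_{\g_+}=a_\u(\s)[\v+B\s+\u]_{\g_-}$, so $c_\u=0$. If exactly one, say $\u+\g$, lies in $L'$ and $\u\notin L'$, I would separate subcases by whether $I_\u\subset\supp(B)\cup I_\0$: if not, Lemma \ref{lem:supp(b)-2}(ii) annihilates the surviving first term, forcing $c_\u=0$; if so, Assumption \ref{ass3} upgrades the containment to the equality $\supp(B)\cup I_\u=\supp(B)\cup I_\0$, placing $I_\u$ in $\cN^c$, and Lemma \ref{lem:expansion} then forces every Taylor term of $c_\u(\s)$ to be divisible by $(B\s)^{I_{\u+\g}\cup I_\u\setminus K_\cN}$, which by \eqref{eqn:P_cN} is a defining generator of $P_\cN$. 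The symmetric case $\u\in L'$, $\u+\g\notin L'$ is identical (applying Lemma \ref{lem:supp(b)-2}(ii) now to the second term), and the case where neither lies in $L'$ is trivial. The main obstacle, once the setup is in place, is this combinatorial bookkeeping: tracking membership in $L'$, the status of each negative support relative to $\supp(B)\cup I_\0$, and matching the residual monomial $(B\s)^{I\cup J\setminus K_\cN}$ to a defining generator of $P_\cN$. Assumption \ref{ass3} must be invoked at each branch to guarantee $I_\0\subset\supp(B)\cup I_\u$, so that the residual exponent really has the shape prescribed by \eqref{eqn:P_cN}; the classical factorial identity underlying the first case, though the computational heart of the cancellation, is routine once isolated.
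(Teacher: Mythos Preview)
Your proposal is correct and follows essentially the same approach as the paper's own proof. The paper's argument has the identical structure: verify the Euler operators act trivially termwise, apply a binomial $\partial^{\u'_+}-\partial^{\u'_-}$ to $\widetilde{F}_\cN$, use the factorial identity to cancel the terms where the shifted index stays in $L'$ (the paper defers this step to \cite{Log} rather than spelling it out as you do), observe via Lemma~\ref{lem:supp(b)-2}(ii) and Assumption~\ref{ass3} that any surviving nonzero term has its complementary negative support landing in $\cN^c$, and then invoke Lemma~\ref{lem:expansion} to conclude that each surviving coefficient is divisible by a generator $(B\s)^{I\cup J\setminus K_\cN}$ of $P_\cN$, whence $q(\partial_\s)\in P_\cN^\perp$ kills it at $\s=\0$. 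Your case breakdown by $(\1_{L'}(\u),\1_{L'}(\u+\g))$ is simply a reindexed version of the paper's summation over $\{\u\in L':I_{\u\mp\u'}\in\cN^c\}$.
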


\begin{proof}
Let $\u'\in L$ and $\u\in L'$.
If $\partial^{\u'_+}(a_\u(\s)x^{\v+B\s+\u})\ne 0$, then we have $I_\u\cup I_{\u-\u'}\subset \supp(B)\cup I_\0$ by Lemma \ref{lem:supp(b)-2-(ii)}, and hence $\supp(B)\cup I_{\u-\u'}=\supp(B)\cup I_\0$ by Assumption \ref{ass3}.
Thus $I_{\u-\u'}\notin \cN$ implies $I_{\u-\u'}\in \cN^c$. 

Similar to the arguments in the proofs of \cite[Theorem 5.4, Theorem 6.2]{Log}, we see that
\begin{align}
  (\partial^{\u'_+}&-\partial^{\u'_-})\bullet \widetilde{F}_\cN(\x,\s)\\
  &=\sum_{\u\in L', I_{\u-\u'}\in \cN^c}m(\s)\partial^{{\u'}_+}(a_\u(\s)x^{\v+B\s+\u})\\
  &\qquad -\sum_{\u\in L', I_{\u+\u'}\in\cN^c}m(\s)\partial^{{(-\u)'}_+}(a_\u(\s)x^{\v+B\s+\u}).
\end{align}
Let $q(\partial_\s)\in P_\cN^\perp$. 
Then the series $(q(\partial_{s})\bullet
\widetilde{F}_\cN(\x,\s))_{|\s=\0}$
is a solution to $M_A(\bbeta)$ if 
\begin{equation}
  \label{eqn:termIandJ}
  \left(q(\partial_{s})\bullet
  \left(m(\s)
  \partial^{\u'_+}a_\u(\s)x^{\v+B\s+\u}\right)\right)_{|\s=\0}=0
\end{equation}
for any $\u\in L'$ and $\u'\in L$ with $I_{\u-\u'}\in \cN^c$.


By Lemma \ref{lem:expansion}, each coefficient of
\begin{align}
  \label{eqn:IJlowest}
  m(\s)&(\partial^{\u'_+}a_\u(\s)x^{\v+B\s+\u})\\
  &=m(\s)a_\u(\s)[\v+B\s+\u]_{\u'_+}x^{\v+B\s+\u-\u'_+}
\end{align}
in the indeterminates $\s$ is divided by $(B\s)^{I_\u\cup I_{\u-\u'}\setminus K_\cN}$, hence belongs to $P_\cN$. 
By the definition of $P_\cN^\perp$, the assertion holds.
\end{proof}


\section{Relations between $P_\cN^\perp$ and $Q_\v^\perp$}

In this section, we recall $Q_\v$ and its orthogonal complement $Q_\v^\perp$ defined in \cite[Section 2.3]{SST}, and discuss relations between $P_\cN^\perp$ and $Q_\v^\perp$. For the definitions of $P_\cN$ and $P_\cN^\perp$, see \eqref{eqn:P_cN} and \eqref{eqn:defofPprep}. 

Consider the fake indicial ideal $\find_{\w}(H_A(\bbeta))$ of $H_A(\bbeta)$ with respect to $\w$:
\begin{equation}\label{eqn:fake_ind}
  \find_{\w}(H_A(\bbeta)):=\langle A\theta_\x-\bbeta\rangle+\widetilde{\ini}_\w(I_A)\subset \C[\theta_\x]:=\C[\theta_{1},\ldots,\theta_{n}].
\end{equation}
Here $\widetilde{\ini}_\w(I_A)$ is the distraction of the initial ideal $\ini_\w(I_A)$ with respect to $\w$ (cf.\,\cite[Section 3.1]{SST}).
Related to the reduced Gr\"{o}bner basis $\cG=\{\g^{(i)}\,|\,i=1,\ldots,m\}$ of $I_A$ with respect to $\w$, define
\begin{equation}
G^{(i)}:=I_{-\g^{(i)}}\setminus I_\0=\{j\in \{1,\ldots,n\}\,|\, v_j\in\N, g_j^{(i)}-v_j>0\}
\end{equation}
for $i=1,\ldots,m$.
Since 
\begin{equation}
\widetilde{\ini}_\w (I_A)
=\left\langle\,
[\theta_\x]_{\g^{(i)}_+}:=\prod_{j;\,g_j^{(i)}>0}\prod_{\nu=0}^{g_j^{(i)}-1}(\theta_j-\nu)\,\biggl|\,
i=1,\ldots,m\,
\right\rangle
\end{equation}
by \cite[Theorem 3.2.2]{SST}, 
we see that its primary component at a fake exponent $\v$ is 
\begin{equation}
\widetilde{\ini}_\w(I_A)_\v
=
\left\langle\,
([\theta]_{\g^{(i)}_+})_\v:=
\prod_{j\in G^{(i)}}(\theta_j-v_j)\,\biggl|\,
i=1,\ldots,m\,
\right\rangle.\label{ass1}
\end{equation}

We obtain the homogeneous ideal $Q_\v$ of $\C[\theta_\x]$ from $\find_\w(H_A(\bbeta))_\v$
by replacing $\theta_j\mapsto \theta_j+v_j$ for $j=1,\ldots,n$ (cf.\,\cite[Section 2.3]{SST}).
Namely, 
\begin{equation}
  \label{eqn:Q_v}
  Q_\v=\left\langle\,A\theta_\x\,\right\rangle+\left\langle\prod_{j\in G^{(i)}}\theta_j\,\biggl|\,
  i=1,\ldots,m\,\right\rangle.
\end{equation}
The orthogonal complement $Q_\v^\perp$ of $Q_\v$ is defined by
\begin{equation}
Q_\v^\perp:=
\{
f\in \C[\x]\,|\, \text{$\phi(\partial_\x)(f)=0$
for all $\phi=\phi(\theta_\x)\in Q_\v$}
\}.
\end{equation}
Note that $Q_\v^\perp$ is a graded $\C$-vector space with the usual grading.
\begin{proposition}
  \label{prop3}
  Let $f(\x)$ be a polynomial.
  Then $x^\v f(\log\x)$ is a solution to 
  $\find_\w(H_A(\bbeta))$
   if and only if $f(\x)$ satisfies the following conditions:
  \begin{enumerate}
  \item[(i)]
  $f(\x)\in \C[\x G]:=\C[\x \g^{(1)},\ldots,\x \g^{(m)}]$.
  \item[(ii)]
  $\partial_\x^{G^{(i)}}\bullet f(\x)=0$ for all $i=1,\ldots,m$.
  \end{enumerate}
  Here
  \begin{equation}
    \x G:=(\x \g^{(1)},\ldots,\x \g^{(m)})=\left(\sum_{j=1}^n g^{(1)}_jx_j,\ldots,\sum_{j=1}^n g^{(m)}_jx_j\right)
  \end{equation}
  for $\x=(x_1,\ldots,x_n)$.
  \end{proposition}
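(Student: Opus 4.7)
The plan is to translate the condition that $x^\v f(\log \x)$ is annihilated by $\find_\w(H_A(\bbeta))$ into purely algebraic conditions on $f(\x)\in\C[\x]$ by shifting $\theta_\x\mapsto\theta_\x+\v$, and then to match these conditions against (i) and (ii). Combining $\theta_j(x^\v g(\x))=x^\v(\theta_j+v_j)g(\x)$ with $\theta_j f(\log \x)=(\partial_j f)(\log \x)$, one obtains
\begin{equation}
  p(\theta_\x)\bullet\bigl(x^\v f(\log \x)\bigr)
  = x^\v\bigl(p(\partial_\x+\v)f\bigr)(\log \x)
\end{equation}
for every $p(\theta_\x)\in\C[\theta_\x]$. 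Because the map $g(\x)\mapsto g(\log \x)$ is injective on $\C[\x]$, being a solution is equivalent to the polynomial identity $p(\partial_\x+\v)f=0$ for each generator $p$ of $\find_\w(H_A(\bbeta))$.

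For the Euler generator $p=A\theta_\x-\bbeta$, the shift yields $A(\theta_\x+\v)-\bbeta=A\theta_\x$ since $A\v=\bbeta$, so the condition becomes $A\partial_\x f=0$. After a linear change of coordinates diagonalising the Euler derivations, the polynomial kernel of this first-order system is the $\C$-subalgebra of $\C[\x]$ generated by the linear forms $\x\u=\sum_j u_j x_j$ with $\u\in\Ker_\C(A)$. Since $\g^{(1)},\ldots,\g^{(m)}$ span $\Ker_\C(A)$ (a standard property of binomial generating sets of a toric ideal: otherwise $I_A$ would be graded by a finer lattice grading than $\Z^n/L$ and would have to contain monomials), this subalgebra equals $\C[\x G]$, which is (i).

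For each initial-ideal generator $p=[\theta_\x]_{\g^{(i)}_+}=\prod_{j:\,g_j^{(i)}>0}\prod_{\nu=0}^{g_j^{(i)}-1}(\theta_j-\nu)$, the shift produces $\prod_{j:\,g_j^{(i)}>0}\prod_{\nu=0}^{g_j^{(i)}-1}(\partial_j+v_j-\nu)$. By the definition of $G^{(i)}$, for $j\in G^{(i)}$ the factor at $\nu=v_j$ equals $\partial_j$, while every other factor (either $j\in G^{(i)}$ with $\nu\ne v_j$, or $j\notin G^{(i)}$ with $g_j^{(i)}>0$) has the form $\partial_j+c$ with $c=v_j-\nu\ne 0$. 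Every such $\partial_j+c$ with $c\ne 0$ is invertible on $\C[\x]$: writing $\partial_j+c=c(1+\partial_j/c)$, local nilpotence of $\partial_j/c$ makes the Neumann series $c^{-1}\sum_{k\ge 0}(-\partial_j/c)^k$ terminate on each polynomial and thus give a two-sided inverse. Since all these factors commute with one another and with $\partial_j$, dividing them out reduces the condition $p(\partial_\x+\v)f=0$ to $\prod_{j\in G^{(i)}}\partial_j\bullet f=\partial_\x^{G^{(i)}}f=0$, which is (ii).

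The main delicate step is this invertibility-and-commutativity manipulation, which is exactly what makes the ``primary-component-at-$\v$'' reduction of $\widetilde{\ini}_\w(I_A)$ recalled at \eqref{ass1} effective at the level of annihilators of $x^\v f(\log\x)$, without having to localise the ideal first. With both parts in hand, combining the Euler and initial-ideal conditions yields the equivalence claimed by the proposition.
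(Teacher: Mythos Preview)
Your proof is correct. The paper's proof is much terser: it cites \cite[Theorem~2.3.11]{SST} for the equivalence between $x^\v f(\log\x)$ solving $\find_\w(H_A(\bbeta))$ and $f\in Q_\v^\perp$, then reads off condition~(ii) from the description \eqref{ass1} of the primary component $\widetilde{\ini}_\w(I_A)_\v$, and cites \cite[Lemma~5.1]{LogFree} for condition~(i). Your argument unpacks exactly what lies behind these citations: the shift $\theta_j\mapsto\partial_j+v_j$ is what underlies \cite[Theorem~2.3.11]{SST}, and your invertibility-of-$(\partial_j+c)$ step on $\C[\x]$ is precisely the computation that justifies passing from the full distraction $[\theta_\x]_{\g^{(i)}_+}$ to its localisation $\prod_{j\in G^{(i)}}(\theta_j-v_j)$ at $\v$ when acting on $x^\v\cdot\C[\log\x]$. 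Your grading sketch for why the $\g^{(i)}$ span $\Ker_\C(A)$ (an extra $\ell$-grading would force the $\ell$-homogeneous pieces $\partial^{\u_\pm}$ of some binomial in $I_A$ to lie in $I_A$, contradicting primeness) is the content of the cited Lemma~5.1. So your route is more self-contained and elementary, while the paper's route leverages the $Q_\v^\perp$ framework it has already set up and needs throughout Section~3.
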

  
  \begin{proof}
  By \cite[Theorem 2.3.11]{SST},
  the function $x^\v f(\log\x)$ is a solution to $\find_\w(H_A(\beta))$ if and only if $f(\x)\in Q_\v^\perp$.
  From $f(\x)\in \langle A\partial_\x\rangle^\perp$,
  we see (i) \cite[Lemma 5.1]{LogFree}.
  (ii) follows from Equation \eqref{ass1}.
  \end{proof}

\begin{example}[Continuation of Example \ref{ex:SST-ex3.5.2_(1)}]
  \label{ex:SST-ex3.5.2_(2)}
  Note that $\v-\g^{(1)}=(-1,0,-1,0,3)^T$ and $\v-\g^{(2)}=(0,-1,0,-1,3)^T$.
  Thus we see that 
  \begin{align}
    G^{(1)}&=I_{-\g^{(1)}}\setminus I_\0=\nsupp(\v-\g^{(1)})\setminus \nsupp(\v)=\{1,3\},\\ 
    G^{(2)}&=I_{-\g^{(2)}}\setminus I_\0=\nsupp(\v-\g^{(1)})\setminus \nsupp(\v)=\{2,4\}.
  \end{align}
  The ideal $Q_\v\subset \C[\theta_{\x}]=\C[\theta_1, \theta_2, \theta_3, \theta_4, \theta_5]$ is given as
  \begin{equation}
    Q_\v=\langle \theta_1+\theta_2+\theta_3+\theta_4+\theta_5, -\theta_1+\theta_2+\theta_3-\theta_4,-\theta_1-\theta_2+\theta_3+\theta_4, \theta_1\theta_3, \theta_2\theta_4\rangle.
  \end{equation}
  In addition, we see that 
  \begin{align}
    Q_\v^\perp&=\C\cdot 1+\C\cdot \x\g^{(1)}+\C\cdot \x\g^{(2)}+\C\cdot (\x\g^{(1)})\cdot (\x\g^{(2)}).
  \end{align}
\end{example}
To compare $Q_\v$ with $P_\cN$, we consider the graded ring homomorphism $\Phi_B: \C[\theta_\x]\rightarrow \C[\s]$ defined by $\theta_j\mapsto (B\s)_j$ for $j=1,\ldots,n$.
By the linear independence of $B$, we see that $\Phi_B$ is surjective.
Define $P_{B}:=\Phi_B(Q_\v)$.
By the ring isomorphism theorem, $\Phi_B$ induces the ring isomorphism
\begin{equation}
  \widetilde{\Phi}_B: \C[\theta_\x]/\Phi_B^{-1}(P_B)\simeq \C[\s]/P_B.
\end{equation}
Since $\langle A\theta_\x\rangle$ is vanished by $\Phi_B$, we have
\begin{equation}\label{eqn:P_B_def}
  P_B=\left\langle(B\s)^{G^{(i)}}\,\Bigl|\, i=1,\ldots, m\right\rangle.
\end{equation}

  
\begin{proposition}
\label{prop:Assump(2)}
Let $J\in {\NS_\w(\v)}^c$.
Then
$G^{(i)}\subset J\setminus I_\0$ for some $i$.
\end{proposition}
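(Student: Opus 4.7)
The plan is to unpack the definition of $J\in\NS_\w(\v)^c$ to obtain $\u'\in L$ with $I_{\u'}=J$ and $\u'\notin C(\w)$, then use Gr\"obner-theoretic arguments to produce an index $i$ with $\g^{(i)}_+\le \u'_-$ componentwise, and finally translate that lattice inequality into the set inclusion $G^{(i)}\subset J\setminus I_\0$.

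First I would extract from the definition of $\NS_\w(\v)^c$ the element $\u'\in L$ satisfying $I_{\u'}=J$ and $\u'\notin C(\w)$. The core intermediate claim is then that $\partial^{\u'_-}\in\ini_\w(I_A)$; equivalently, since $\cG$ is a Gr\"obner basis of $I_A$ with leading terms $\partial^{\g^{(i)}_+}$, there is an index $i$ with $\g^{(i)}_+\le \u'_-$.

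To establish this claim I would use that $\partial^{\u'_+}-\partial^{\u'_-}\in I_A$ together with $\u'\cdot\w\neq 0$ (by genericity of $\w$ and $\u'\neq\0$). If $\u'\cdot\w<0$, then $\partial^{\u'_-}$ is the $\w$-leading term of $\partial^{\u'_+}-\partial^{\u'_-}$, so $\partial^{\u'_-}\in\ini_\w(I_A)$ immediately. If instead $\u'\cdot\w>0$, I would argue by contradiction: assume $\partial^{\u'_-}$ is a standard monomial modulo $\cG$. Since $\partial^{\u'_+}\equiv\partial^{\u'_-}\pmod{I_A}$, uniqueness of the normal form with respect to the reduced Gr\"obner basis $\cG$ forces the $\cG$-reduction of $\partial^{\u'_+}$ to terminate at $\partial^{\u'_-}$. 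Because each reduction step has the form $\partial^\a\to\partial^{\a-\g^{(i_k)}}$ with $\g^{(i_k)}_+\le\a$, the recorded sequence expresses $\u'=\sum_k\g^{(i_k)}\in C(\w)$, contradicting $\u'\notin C(\w)$. Hence in both cases $\partial^{\u'_-}\in\ini_\w(I_A)$, so $\g^{(i)}_+\le\u'_-$ for some $i$.

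With such $i$ fixed, take any $j\in G^{(i)}$. By definition $v_j\in\N$ and $g^{(i)}_j>v_j\ge 0$, so $g^{(i)}_{+,j}=g^{(i)}_j>0$, and the inequality $\g^{(i)}_+\le\u'_-$ gives $u'_{-,j}\ge g^{(i)}_j$. In particular $u'_{-,j}>0$, so $u'_{+,j}=0$ and $u'_j=-u'_{-,j}\le -g^{(i)}_j<-v_j$. Therefore $v_j+u'_j<0$, which means $j\in I_{\u'}=J$; since $v_j\ge 0$ we also have $j\notin I_\0$, so $j\in J\setminus I_\0$. This proves $G^{(i)}\subset J\setminus I_\0$.

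The main obstacle is the case $\u'\cdot\w>0$ of the intermediate claim: the bound $\g^{(i)}_+\le\u'_-$ is not visible from the sign of $\u'\cdot\w$ alone, and must be extracted via the Gr\"obner-theoretic fact that membership in $C(\w)$ is exactly the condition that $\cG$-reduction of $\partial^{\u'_+}$ terminates at $\partial^{\u'_-}$. The remainder is a direct combinatorial translation between the lattice-level inequality $\g^{(i)}_+\le\u'_-$ and the set-level inclusion $G^{(i)}\subset J\setminus I_\0$.
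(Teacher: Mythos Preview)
Your proof is correct and follows essentially the same route as the paper: extract $\u'\notin C(\w)$ with $I_{\u'}=J$, show $\partial^{\u'_-}\in\ini_\w(I_A)$ so that some $\partial^{\g^{(i)}_+}$ divides it, and then translate $\g^{(i)}_+\le\u'_-$ into $G^{(i)}\subset J\setminus I_\0$. The only difference is that the paper invokes \cite[Lemma~4.2]{Log} to choose the representative so that $\partial^{\u'_+}\notin\ini_\w(I_A)$ (forcing $\u'\cdot\w<0$ and hence your Case~1), whereas you handle both signs of $\u'\cdot\w$ directly, your Case~2 Gr\"obner-reduction argument being precisely an inline proof of the relevant part of that lemma.
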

  
\begin{proof}
By definition and \cite[Lemma 4.2]{Log}, we see that there exists $\u\in L\setminus C(\w)$ such that $J=I_\u$ and $\partial^{\u_+}\notin \ini_\w(I_A)$.
Hence $\partial^{\u_-}=\ini_\w(\partial^{\u_-}-\partial^{\u_+})$ is divided by some $\partial^{\g^{(i)}_+}$.
Let $j\in G^{(i)}=I_{-\g_+^{(i)}}\setminus I_\0$. 
Then $v_{j}\in \N$ and $v_j-g_j^{(i)}\in\Z_{<0}$.
  Since $g_j^{(i)}\in\Z_{>0}$, we see that $g_j^{(i)}\leq -u_j$ and $v_{j}+u_j\leq v_j-g^{(i)}_j<0$.
Thus we have $j\in I_\u\setminus I_\0=J\setminus I_\0$.
\end{proof}
Three ideals $Q_\v$, $P_{\cN}$, and $P_B$ are related as follows.
\begin{proposition}\label{prop:Key1}
  Let $Q_\v$, $P_{\cN}$, and $P_B$ be the ones in \eqref{eqn:Q_v}, \eqref{eqn:P_cN}, and \eqref{eqn:P_B_def}, respectively.
  Then, the following hold.
  \begin{enumerate}
    \item[(i)] 
    $m(\s)\cdot P_B\subset P_\cN\subset P_B$. 
    In particular, if $K_{\cN}=I_0$, then $P_{\cN}=P_B$. 
    \item[(ii)] If $B$ is a basis of $L$, then $\Phi_B^{-1}(P_B)=Q_\v$.
  \end{enumerate}
\end{proposition}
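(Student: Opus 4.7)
The plan is to establish each inclusion in part (i) by comparing generators after unpacking the definitions, then to handle part (ii) via the kernel of $\Phi_B$, which is where the basis hypothesis really enters. I expect the main obstacle to be the precise identification $\Ker \Phi_B = \langle A\theta_\x\rangle$ in (ii); everything else is essentially set-theoretic bookkeeping.

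For the inclusion $P_\cN \subset P_B$ in (i), I will fix a generator $(B\s)^{(I \cup J) \setminus K_\cN}$ with $I \in \cN$ and $J \in \cN^c$. Since $\cN^c$ and $\cN$ share the support condition $\supp(B) \cup I_\u = \supp(B) \cup I_\0$, the requirement $J \notin \cN$ forces $J \in \NS_\w(\v)^c$, so Proposition \ref{prop:Assump(2)} produces some $G^{(i)} \subset J \setminus I_\0 \subset (I \cup J) \setminus K_\cN$ (here I use $I_\0 \in \cN$, so $K_\cN \subset I_\0$). Thus $(B\s)^{G^{(i)}}$ divides the generator, placing it in $P_B$. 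For the reverse inclusion $m(\s) \cdot P_B \subset P_\cN$, it suffices to prove each $m(\s)(B\s)^{G^{(i)}}$ lies in $P_\cN$: if $G^{(i)} \not\subset \supp(B)$ the factor vanishes, and otherwise I will take $I = I_\0$ and $J := I_{-\g^{(i)}}$ and check that $J \in \cN^c$. The support condition follows from Assumption \ref{ass3} combined with $G^{(i)} \subset \supp(B)$, while $J \notin \cN$ follows from $-\g^{(i)} \notin C(\w)$ (since $\g^{(i)} \cdot \w > 0$). A short combinatorial check using $K_\cN \subset I_\0$ and $G^{(i)} \cap I_\0 = \emptyset$ then yields $(I_\0 \setminus K_\cN) \cup G^{(i)} = (I \cup J) \setminus K_\cN$, so $m(\s)(B\s)^{G^{(i)}} = (B\s)^{(I \cup J) \setminus K_\cN} \in P_\cN$. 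The ``in particular'' assertion is immediate: $K_\cN = I_\0$ makes $m(\s) = 1$ and both inclusions collapse to equality.

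For (ii), $\Phi_B$ annihilates $\langle A\theta_\x \rangle$ because the columns of $B$ lie in $L = \Ker A$, giving $Q_\v \subset \Phi_B^{-1}(P_B)$ at once. The hard part is the identification $\Ker \Phi_B = \langle A\theta_\x \rangle$, which I will argue geometrically: $\Phi_B$ is the pullback of polynomial functions along the linear map $\C^h \to \C^n$, $\s \mapsto B\s$, and since $B$ is a $\Z$-basis of $L$, its image is precisely the linear subspace $L \otimes_\Z \C = \Ker(A \colon \C^n \to \C^d)$; the vanishing ideal of this subspace is generated by the entries of $A\theta_\x$. With this in hand, for any $f \in \Phi_B^{-1}(P_B)$ I will write $\Phi_B(f) = \sum_i h_i(\s)(B\s)^{G^{(i)}}$, lift each $h_i$ to some $H_i \in \C[\theta_\x]$ via the surjection $\Phi_B$, and observe that $f - \sum_i H_i \prod_{j \in G^{(i)}} \theta_j \in \Ker \Phi_B = \langle A\theta_\x \rangle$, giving $f \in Q_\v$.
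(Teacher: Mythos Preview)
Your proposal is correct and follows essentially the same approach as the paper's proof: both parts of (i) are handled by the same generator-by-generator comparisons via Proposition~\ref{prop:Assump(2)} and the case split on whether $I_{-\g^{(i)}}\in\cN^c$ (you phrase the latter as the contrapositive, splitting on $G^{(i)}\subset\supp(B)$), and part (ii) rests on the identification $\Ker\Phi_B=\langle A\theta_\x\rangle$, which the paper states in one line and you spell out geometrically. Your write-up is somewhat more detailed than the paper's, but there is no substantive difference in strategy.
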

\begin{proof}
  (i)\ Let $I\in\cN$ and $J\in{\cN}^c$.
  Since $J\in {\NS_\w(\v)}^c$ and $K_{\cN}\subset I_\0$,  $I\cup J\setminus K_{\cN}$ contains some $G^{(i)}$ by Proposition \ref{prop:Assump(2)}.
  Hence the inclusion $P_{\cN}\subset P_B$ holds.

  For any $i=1,\ldots,m$, since $-\g^{(i)}\notin C(\w)$ we see that $I_{-\g^{(i)}}\in{\NS_\w(\v)}^c$.
  If $I_{-\g^{(i)}}\notin \cN^c$, then 
  \begin{equation}\label{eqn:inequality_for_supp(B)}
    \supp(B)\cup I_{-\g^{(i)}}\ne\supp(B)\cup I_\0.
  \end{equation}
  By Assumption \ref{ass3}, \eqref{eqn:inequality_for_supp(B)} implies that $G^{(i)}=I_{-\g^{(i)}}\setminus I_\0\not\subset \supp(B)$.
  Hence we have $(B\s)^{G^{(i)}}=0$.
  If $I_{-\g^{(i)}}\in \cN^c$, then 
  \begin{equation}
    m(\s)(B\s)^{G^{(i)}}=(B\s)^{I_\0\cup I_{-\g^{(i)}}\setminus K_\cN}\in P_\cN.
  \end{equation}
  Hence we have $m(\s)\cdot P_B\subset P_\cN$.

  (ii)\ Since $B$ is a basis of $L$, we have $\Ker(\Phi_B)=\langle A\theta_\x\rangle$.
  Thus the assertion ${\Phi_B}^{-1}(P_B)= Q_{\v}$ holds from \eqref{eqn:Q_v}.
\end{proof}
\begin{example}[Continuation of Example \ref{ex:SST-ex3.5.2_(1)} and \ref{ex:SST-ex3.5.2_(2)}]
  \label{ex:SST-ex3.5.2_(3)}
  Consider the case where $B=\{\g^{(1)}, \g^{(2)}\}$.
  Then we have $m(\s)=(B\s)^{\emptyset}=1$, and
  \begin{equation}
    P_B=\langle (B\s)^{G^{(1)}}, (B\s)^{G^{(2)}}\rangle =\langle s_1^2, s_2^2\rangle=P_\cN.
  \end{equation}
  Furthermore, since $B$ is a basis of $L$, we see that 
  \begin{equation}
    \Phi_B^{-1}(P_B)=\Phi_B^{-1}(\langle (B\s)^{G^{(1)}}, (B\s)^{G^{(2)}}\rangle)=\langle \theta_1\theta_3, \theta_2\theta_4\rangle+\langle A\theta_\x\rangle=Q_\v.
  \end{equation}
  Consider the other case where $B_1=\{\g^{(1)}\}$.
  Then we have $m(s)=(B_1 s)^{\emptyset}=1$, and
  \begin{equation}
    P_{B_1}=\langle (B_1 s)^{G^{(1)}}\rangle =\langle s^2 \rangle=P_{\cN_1}.
  \end{equation}
  We see that $B_1$ does not span $L$ and that   
  \begin{equation}
    \Phi_B^{-1}(P_{B_1})=\Phi_{B_1}^{-1}(\langle (B_1 s)^{G^{(1)}}\rangle)=\langle \theta_1\theta_3\rangle+\langle A\theta_\x\rangle\subsetneq Q_\v.
  \end{equation}
\end{example}

\vspace{24pt}
We consider relations between $P_\cN^\perp$ and $P_B^\perp$, and between $P_B^\perp$ and $Q_\v^\perp$.
Recall the construction of a basis of orthogonal complements in \cite[Section 2.3]{SST}.

Let $P$ be a homogeneous ideal of $\C[\s]$.
Fix any term order $\prec$ on $\C[\s]$, and let $\cH\subset \C[\s]$ be the reduced Gr\"{o}bner basis of $P$ with respect to $\prec$.
For any $\mmu\in \N^h$ with $\s^\mmu\in\ini_\prec(P)$, there exist unique $c_{\mmu, \nnu}\in \C$ for $\nnu\in \N^h$ with $|\nnu|=|\mmu|$ and $\s^\nnu\notin\ini_\prec(P)$ such that 
\begin{equation}
  p_\mmu(\s):=\s^\mmu-\sum_{\substack{\nnu\in\N^h;\,|\nnu|=|\mmu|,\\\s^\nnu\notin\ini_\prec(P)}}c_{\mmu,\nnu}\s^{\nnu}\in P.
\end{equation}
We obtain $p_\mmu(\s)$ by taking the normal form  modulo $\cH$ for the monomial $\s^\mmu$.
For $\nnu\in\N^h$ with $\s^\nnu\not\in\ini_\prec(P)$, define the homogeneous polynomial $q_\nnu(\partial_\s)$ of degree $|\nnu|$ by
\begin{equation}
  q_\nnu(\partial_\s):=\frac{1}{\nnu!}\partial_\s^\nnu +\sum_{\substack{\mmu\in\N^h;\,|\mmu|=|\nnu|,\\\s^\mmu\in \ini_\prec(P)}}\frac{c_{\mmu,\nnu}}{\mmu!}\partial_\s^\mmu\in \C[\partial_\s].
\end{equation}
  
\begin{lemma}\label{lem:Basis_of_Pperp}
  Let $P$ be a homogeneous ideal of $\C[\s]$.
  Fix any term order $\prec$ on $\C[\s]$, and let $\cH\subset \C[\s]$ be the reduced Gr\"{o}bner basis of $P$ with respect to $\prec$.
  Then 
  \begin{equation}
    \{p_\mmu(\s)\,|\,\s^\mmu\in\ini_\prec(P)\}
  \end{equation}
  and 
  \begin{equation}
    \{q_\nnu(\partial_\s)\,|\,\nnu\in\N^h\ \text{with}\ \s^\nnu\notin \ini_\prec(P)\}
  \end{equation}
  form $\C$-bases of $P$ and $P^\perp$, respectively.
\end{lemma}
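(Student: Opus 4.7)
The plan is to handle the two basis statements separately, using the bilinear pairing $\langle f(\s), q(\partial_\s)\rangle := (q(\partial_\s)\bullet f(\s))|_{\s=\0}$, which satisfies $\langle \s^\mmu, \partial_\s^\nnu\rangle = \mmu!\,\delta_{\mmu,\nnu}$ and hence restricts to a non-degenerate pairing on each graded piece $\C[\s]_k \times \C[\partial_\s]_k$; in particular $\dim P_k^\perp = \dim \C[\s]_k - \dim P_k$ for every $k$. For the basis of $P$, each $p_\mmu(\s)$ lies in $P$ by construction, being the difference between $\s^\mmu$ and its normal form modulo $\cH$. By the standard property of Gr\"obner basis reduction, every monomial appearing in $p_\mmu(\s)-\s^\mmu$ is a standard monomial strictly $\prec \s^\mmu$, so $\ini_\prec(p_\mmu)=\s^\mmu$, which gives linear independence. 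For spanning, the graded decomposition $\C[\s]_k = P_k \oplus \mathrm{span}\{\s^\nnu : |\nnu|=k,\, \s^\nnu \notin \ini_\prec(P)\}$ yields $\dim P_k = \#\{\mmu : |\mmu|=k,\, \s^\mmu \in \ini_\prec(P)\}$, matching the count of degree-$k$ elements in $\{p_\mmu\}$.

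For the basis of $P^\perp$, linear independence is immediate: $\partial_\s^\nnu$ (with $\s^\nnu \notin \ini_\prec(P)$) appears with coefficient $1/\nnu!$ in $q_\nnu$ but does not appear in any other $q_{\nnu'}$, since the second sum in the definition of $q_{\nnu'}$ is indexed only by $\mmu$ with $\s^\mmu \in \ini_\prec(P)$. For the containment $q_\nnu \in P^\perp$, homogeneity together with the first basis result reduces the task to verifying $\langle p_{\mmu'}, q_\nnu\rangle = 0$ for every $\mmu'$ with $\s^{\mmu'} \in \ini_\prec(P)$ and $|\mmu'| = |\nnu|$. Expanding via $\langle \s^\alpha, \partial_\s^\beta\rangle = \alpha!\,\delta_{\alpha,\beta}$, the $\frac{1}{\nnu!}\partial_\s^\nnu$-term of $q_\nnu$ pairs only with the $-c_{\mmu',\nnu}\s^\nnu$-summand of $p_{\mmu'}$ to give $-c_{\mmu',\nnu}$, and the $\frac{c_{\mmu',\nnu}}{\mmu'!}\partial_\s^{\mmu'}$-term of $q_\nnu$ pairs only with the $\s^{\mmu'}$-summand of $p_{\mmu'}$ to give $+c_{\mmu',\nnu}$; all other cross-terms vanish because the leading-monomial set and the standard-monomial set are disjoint. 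These two contributions cancel, and combined with the dimension count above, $\{q_\nnu\}$ forms a basis of $P^\perp$.

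The main obstacle is the bookkeeping in the cancellation step: one must carefully track which pairs $(\partial_\s^\alpha, \s^\beta)$ give nonzero contributions and verify that the coefficients $c_{\mmu',\nnu}$ appearing in $p_{\mmu'}$ and in $q_\nnu$ are arranged precisely to cancel. Everything else reduces to standard Gr\"obner basis theory and elementary linear algebra on graded pieces of finite dimension.
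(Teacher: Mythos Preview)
Your argument is correct. The paper does not actually prove this lemma; it simply cites \cite[Proposition~2.3.13]{SST} and states that the proof is similar. Your write-up supplies the standard details behind that citation: the Macaulay decomposition $\C[\s]_k = P_k \oplus \operatorname{span}\{\text{standard monomials of degree }k\}$ for the $P$-basis and dimension count, and the duality cancellation $\langle p_{\mmu'}, q_\nnu\rangle = c_{\mmu',\nnu} - c_{\mmu',\nnu} = 0$ for the $P^\perp$-basis. So you are following the same route as the cited reference, just making explicit what the paper leaves implicit.
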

\begin{proof}
  This is similar to \cite[Proposition 2.3.13]{SST}.
\end{proof}
\begin{lemma}\label{lem:duality}
  Let $P$ and $\widetilde{P}$ be homogeneous ideals of $\C[\s]$.
  Then $P\subset \widetilde{P}$ if and only if $P^\perp\supset {\widetilde{P}}^\perp$.
\end{lemma}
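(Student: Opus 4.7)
The forward direction is immediate from the definition: if $P\subset \widetilde{P}$ and $q(\partial_\s)\in \widetilde{P}^\perp$, then $(q(\partial_\s)\bullet h(\s))|_{\s=\0}=0$ for every $h\in\widetilde{P}\supset P$, so $q\in P^\perp$. Hence $\widetilde{P}^\perp\subset P^\perp$.

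For the reverse direction I will use the fact that, since $P$ and $\widetilde{P}$ are homogeneous, both $P^\perp$ and $\widetilde{P}^\perp$ inherit a grading from $\C[\partial_\s]$, and for each $d$ the apolarity pairing
\begin{equation}
\langle\,\cdot\,,\,\cdot\,\rangle\colon \C[\s]_d\times\C[\partial_\s]_d\to \C,\qquad \langle h(\s),q(\partial_\s)\rangle:=(q(\partial_\s)\bullet h(\s))|_{\s=\0}
\end{equation}
is a perfect pairing of finite-dimensional $\C$-vector spaces, since the monomial bases satisfy $\langle \s^\nnu,\partial_\s^\mmu\rangle=\mmu!\,\delta_{\mmu,\nnu}$.

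Under this pairing, the degree-$d$ piece of the orthogonal complement satisfies $P^\perp_d=(P_d)^{\perp}$, where the right-hand side denotes the annihilator of the subspace $P_d\subset \C[\s]_d$. A standard finite-dimensional duality argument then gives $(P_d^{\perp})^{\perp}=P_d$, and likewise for $\widetilde{P}$. Assuming $P^\perp\supset \widetilde{P}^\perp$, we obtain $P^\perp_d\supset \widetilde{P}^\perp_d$ for every $d$. Taking annihilators in $\C[\s]_d$ reverses inclusions, so
\begin{equation}
P_d=(P^\perp_d)^{\perp}\subset (\widetilde{P}^\perp_d)^{\perp}=\widetilde{P}_d
\end{equation}
for every $d$, and summing over $d$ yields $P\subset \widetilde{P}$.

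There is no real obstacle here beyond isolating the graded-duality fact; the only thing to be slightly careful about is that $P^\perp$ is defined as a subspace of $\C[\partial_\s]$ rather than an ideal, so one should not conflate the two notions of ``orthogonal complement,'' but the perfect pairing above makes the identification $P^\perp_d=(P_d)^\perp$ unambiguous.
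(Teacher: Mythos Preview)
Your proof is correct. Both directions are handled properly: the forward direction matches the paper's one-line argument, and your reverse direction via the graded apolarity pairing is sound. The key identification $P^\perp_d=(P_d)^\perp$ holds because for a homogeneous $q$ of degree $d$ and homogeneous $h\in P$ of degree $e$, the quantity $(q(\partial_\s)\bullet h(\s))|_{\s=\0}$ vanishes automatically unless $e=d$; hence membership of $q$ in $P^\perp$ is tested only against $P_d$.

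The paper, however, argues the reverse direction differently. Instead of invoking the abstract double-annihilator identity $(P_d^\perp)^\perp=P_d$, it uses the explicit Gr\"{o}bner-basis machinery set up in the preceding Lemma~\ref{lem:Basis_of_Pperp}: given $p\in P$, it reduces $p$ modulo the reduced Gr\"{o}bner basis of $\widetilde{P}$ to obtain $p=\widetilde{p}+\sum_{\s^\llambda\notin\ini_\prec(\widetilde{P})}d_\llambda\s^\llambda$ with $\widetilde{p}\in\widetilde{P}$, and then applies the dual basis elements $\widetilde{q}_\nnu\in\widetilde{P}^\perp\subset P^\perp$ to show each $d_\nnu=0$. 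Your approach is cleaner and more conceptual, and does not depend on the Gr\"{o}bner setup; the paper's approach has the virtue of being entirely self-contained within the explicit bases it has already constructed, so no separate appeal to finite-dimensional duality is needed.
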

\begin{proof}
  Assume that $P\subset \widetilde{P}$. 
  By definition, $P^\perp\supset {\widetilde{P}}^\perp$ is clear.

  Conversely, assume that $P^\perp\supset {\widetilde{P}}^\perp$. 
  Fix any term order $\prec$ on $\C[\s]$, and let $\widetilde{\cH}$ be the reduced Gr\"{o}bner basis of $\widetilde{P}$.

  Let $\{\widetilde{q}_\nnu(\partial_\s)\,|\,\nnu\in\N^h\ \text{with}\ \s^\nnu\notin \ini_\prec(\widetilde{P})\}$ be the $\C$-basis of $\widetilde{P}^\perp$
  as in Lemma \ref{lem:Basis_of_Pperp}.
  Let $p\in P$. 
  Applying the division algorithm  with respect to $\widetilde{\cH}$ to $p$, we can express $p$ as 
  \begin{equation}
    p=\widetilde{p}+\sum_{\llambda; \s^\llambda\notin\ini_\prec(\widetilde{P})}d_\llambda s^\llambda
  \end{equation}
  with some $\widetilde{p}\in \widetilde{P}$ and $d_\llambda\in\C$.
  For each $\nnu\in\N^h$ with $\s^\nnu\not\in \ini_\prec(\widetilde{P})$, since $[\partial_\s^\mmu\bullet \s^\llambda]_{|\s=\0}=\mmu!\delta_{\mmu,\llambda}$ for any $\mmu$ and $\llambda$, we have 
  \begin{align}
    [\widetilde{q}_\nnu(\partial_\s)&\bullet p]_{|\s=\0}\\
    &=\left[\widetilde{q}_\nnu(\partial_\s)\bullet \left(\widetilde{p}+\sum_{\llambda; \s^\llambda\notin\ini_\prec(\widetilde{P})}d_\llambda s^\llambda\right)\right]_{|\s=\0}\\
    &=\sum_{\llambda; \s^\llambda\notin\ini_\prec(\widetilde{P})}d_\llambda\left[\widetilde{q}_\nnu(\partial_\s)\bullet \s^\llambda\right]_{|\s=\0}\\
    &=\sum_{\llambda; \s^\llambda\notin\ini_\prec(\widetilde{P})}d_\llambda\left[\left(\frac{1}{\nnu!}\partial_\s^\nnu +\sum_{\substack{\mmu\in\N^h;\,|\mmu|=|\nnu|,\\\s^\mmu\in \ini_\prec(\widetilde{P})}}\frac{c_{\mmu,\nnu}}{\mmu!}\partial_\s^\mmu\right)\bullet \s^\llambda\right]_{|\s=\0}\\
    &=d_\nnu.
  \end{align}
  It follows from the assumption $P^\perp\supset \widetilde{P}^\perp$ that $d_\nnu=0$ for all $\nnu\in\N^h$ with $\s^\nnu\notin\ini_\prec(\widetilde{P})$.
  Hence, we have $p\in \widetilde{P}$.
\end{proof}

Let $\C[\partial_\z]:=\C[\partial_{z_1},\ldots,\partial_{z_h}]$ be the ring of partial differential operators with constant coefficients in indeterminates $\z=(z_1,\ldots,z_h)$.
To describe relations between $P_\cN^\perp$ and $P_B^\perp$, and between $P_B^\perp$ and $Q_\v^\perp$, we define an action of $\C[\partial_\z]$ on $\C[\partial_{\s}]$ and a ring homomorphism $\Psi_B$ from $\C[\partial_\s]$ to $\C[\x]$.

For $U(\partial_\z)\in\C[\partial_\z]$ and $q(\partial_\s)\in\C[\partial_\s]$, we define a $\C$-linear operation $U(\partial_{z_1},\ldots,\partial_{z_h})\star  q(\partial_\s)$ by 
\begin{equation}
  U(\partial_{z_1},\ldots,\partial_{z_h})\star q(\partial_\s):=(U(\partial_\z)\bullet q(\z))|_{\z=\partial_\s}\in \C[\partial_\s].
\end{equation}

\begin{lemma}\label{lem:staroperation}
  The following hold for the $\star$-operation.
  \begin{enumerate}
    \item[(i)] Let $k=1, \ldots, h$ and $q(\partial_\s)\in \C[\partial_\s]$.
    Then  
    \begin{equation}
      \partial_{z_{k}}\star q(\partial_\s)=q(\partial_\s)s_k-s_k q(\partial_\s)\in \C\langle \s, \partial_\s\rangle.
    \end{equation}
    \item[(ii)] Let $U(\partial_z), U'(\partial_z)\in \C[\partial_\z]$, and $q(\partial_\z)\in \C[\partial_\s]$. 
    Then
    \begin{equation}
      U(\partial_\z)\star (U'(\partial_\z)\star q(\partial_\s))=(U(\partial_\z)U'(\partial_\z))\star q(\partial_s).
    \end{equation}
    \item[(iii)] Let $U(\partial_\z)=\prod_{\nu=1}^{N}l_\nu(\partial_\z) \in \C[\partial_\z]$ be the product of non-zero linear homogeneous polynomials $l_\nu(\partial_\z)$, and let $q(\partial_\s)\in\C[\partial_\s]$.
    Then there exists $r(\partial_\s)\in\C[\partial_\s]$ such that $U(\partial_\z)\star r(\partial_\s)=q(\partial_\s)$.
    \end{enumerate}
\end{lemma}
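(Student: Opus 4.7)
The plan is to prove (i) and (ii) by direct Weyl-algebra computations on the monomial basis, and then to deduce (iii) from (ii) by an iterative reduction to the surjectivity of $l(\partial_\z)\bullet$ for a single nonzero linear form.

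For (i), I would write $q(\partial_\s)=\sum_\aalpha c_\aalpha\partial_\s^\aalpha$, so that directly from the definition of $\star$ one has
\begin{equation}
\partial_{z_k}\star q(\partial_\s)=(\partial_{z_k}\bullet q(\z))|_{\z=\partial_\s}=\sum_\aalpha c_\aalpha\alpha_k\partial_\s^{\aalpha-\e_k},
\end{equation}
where $\e_k$ denotes the $k$-th standard basis vector. For the commutator on the right, an induction on $\alpha_k$ based on $[\partial_{s_k},s_k]=1$, together with the fact that $\partial_{s_j}$ commutes with $s_k$ for $j\ne k$, yields $[\partial_\s^\aalpha,s_k]=\alpha_k\partial_\s^{\aalpha-\e_k}$; summation against the $c_\aalpha$ produces the same expression.

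For (ii), by bilinearity of $\star$ in $U$ and in $q$ it suffices to verify the identity on monomial triples $U=\partial_\z^\bbeta$, $U'=\partial_\z^\ggamma$, $q=\partial_\s^\aalpha$; both sides then collapse to $[\aalpha]_{\bbeta+\ggamma}\partial_\s^{\aalpha-\bbeta-\ggamma}$ via the elementary identity $[\aalpha]_\ggamma[\aalpha-\ggamma]_\bbeta=[\aalpha]_{\bbeta+\ggamma}$, with the usual convention that such expressions vanish as soon as a component of the subscript exceeds the corresponding entry of the base. Conceptually, (ii) simply records the associativity of the left $\C[\partial_\z]$-module structure on $\C[\partial_\s]$ induced by the $\C$-algebra isomorphism $\C[\partial_\s]\cong\C[\z]$, $\partial_{s_k}\mapsto z_k$.

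For (iii), I would iterate (ii) to rewrite $U\star r=l_1\star(l_2\star(\cdots(l_N\star r)\cdots))$, so that an induction on $N$ reduces the existence of $r$ to the surjectivity of the map $r\mapsto l(\partial_\z)\star r$ on $\C[\partial_\s]$ for any single nonzero linear homogeneous $l$. Under the isomorphism $\C[\partial_\s]\cong\C[\z]$ this becomes surjectivity of $l(\partial_\z)\bullet$ on $\C[\z]$. Writing $l=\sum_{k=1}^h a_k\partial_{z_k}$ with, say, $a_1\ne 0$, I would fix an invertible matrix $A$ whose first column is $(a_1,\ldots,a_h)^T$ and perform the coordinate change $\z=A\w$; this intertwines $l(\partial_\z)\bullet$ on $\C[\z]$ with $\partial_{w_1}\bullet$ on $\C[\w]$, and the latter is visibly surjective via term-wise integration in $w_1$. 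The main obstacle—modest in scope—lies precisely in this last step: one must verify that the linear change of coordinates on $\z$ is compatible with the action of $\C[\partial_\z]$ on $\C[\z]$. This is standard, as the substitution induces a $\C$-algebra automorphism of $\C[\z]$ together with the contragredient automorphism of $\C[\partial_\z]$, and these intertwine the two module structures, so surjectivity transfers between coordinate systems.
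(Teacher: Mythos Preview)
Your proof is correct and follows essentially the same approach as the paper: parts (i) and (ii) are verified on monomials via the Weyl-algebra commutator $[\partial_\s^\aalpha,s_k]=\alpha_k\partial_\s^{\aalpha-\e_k}$ and the falling-factorial identity, and part (iii) is reduced by induction via (ii) to the surjectivity of a single nonzero linear form, handled by a linear change of coordinates and termwise integration. Your discussion of the compatibility of the coordinate change with the $\C[\partial_\z]$-action is in fact more explicit than the paper's, which simply asserts ``by changing coordinates, we may assume $U(\partial_\z)=\partial_{z_1}$.''
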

\begin{proof}
  (i)\ For any $k=1,\ldots,h$, and $\mmu\in\N^h$, we have  
  \begin{equation}
    \partial_{z_k}\star \partial_\s^\mmu=\mu_k \partial_\s^{\mmu-\e_k}=\partial_\s^\mmu s_k -s_k \partial_\s^\mmu.
  \end{equation}
  
  (ii)\ It suffices to show that the equality holds for $U(\partial_\z)=\partial_\z^\llambda$, $U'(\partial_\z)=\partial_\z^\mmu$, and $q(\partial_\s)=\partial_\s^\nnu$ with $\llambda, \mmu, \nnu\in\N^h$. 
  We see that 
  \begin{align}
    \partial_\z^\llambda\star(\partial_\z^\mmu \star \partial_\s^\nnu)&=\partial_\z^\llambda\star ((\partial_\z^\mmu \bullet \z^\nnu)|_{|\z=\partial_\s})\\
    &=[\nnu]_\mmu \partial_\z^\llambda \star \partial_\s^{\nnu-\mmu}\\
    &=[\nnu]_\mmu (\partial_\z^\llambda \bullet \z^{\nnu-\mmu})_{|\z=\partial_\s}\\
    &=[\nnu]_\mmu [\nnu-\mmu]_\llambda \partial_\s^{\nnu-\mmu-\llambda}\\
    &=[\nnu]_{\llambda+\mmu}\partial_\s^{\nnu-(\llambda+\mmu)}\\
    &=(\partial_\z^{\llambda+\mmu}\bullet \partial_\s^{\nnu})_{|\z=\partial_\s}\\
    &=\partial_\z^{\llambda+\mmu}\star \partial_\s^\nnu,
  \end{align}
  and hence the assertion holds.

  (iii)\ We show the statement by induction on $N$. 
  First, let $U(\partial_\z)$ be a non-zero linear homogeneous polynomial, and let $q(\partial_\s)\in\C[\partial_\s]$.
  By changing coordinates, we may assume that $U(\partial_\z)=\partial_{z_1}$.
  Put
  \begin{equation}
    q(\partial_\s)=\sum_{\nnu\in\N^h}d_\nnu \partial_\s^\nnu.
  \end{equation} 
  Then 
  \begin{equation}
    r(\partial_\s)=\sum_{\nnu\in\N^h}\frac{d_\nnu}{\nu_1+1}\partial_\s^{\nnu+\e_1}
  \end{equation}
  satisfies $U(\partial_\z)\star r(\partial_\s)=q(\partial_\s)$.

  Next, fix $N>1$, and let $U(\partial_\z)=\prod_{\nu=1}^{N}l_\nu(\partial_\z)$ such that $l_\nu(\partial_\z)$ are non-zero linear homogeneous polynomials.
  Assume that the assertion holds for any product of non-zero linear homogeneous polynomial of degree less than $N$.
  By the induction hypothesis, there exist
  $r(\partial_\s), \widetilde{r}(\partial_\s)\in \C[\partial_\s]$ such that 
  \begin{equation}
    l_{1}(\partial_\z)\star \widetilde{r}(\partial_\s)=q(\partial_\s),\qquad \left(\prod_{\nu=2}^{N}l_\nu(\partial_\z)\right)\star r(\partial_\s)=\widetilde{r}(\partial_\s).
  \end{equation}
  By (ii), we have 
  \begin{align}
    U(\partial_\z)\star r(\partial_\s)&=\left(l_{1}(\partial_\z)\left(\prod_{\nu=2}^{N}l_\nu(\partial_\z)\right)\right)\star r(\partial_\s)\\
    &=l_{1}(\partial_\z)\star \left(\left(\prod_{\nu=2}^{N}l_\nu(\partial_\z)\right)\star r(\partial_\s)\right)\\
    &=l_{1}(\partial_\z)\star \widetilde{r}(\partial_\s)\\
    &=q(\partial_\s),
  \end{align}
  and hence the assertion holds.
\end{proof}

\begin{lemma}\label{lem:Action}
  Let $U(\partial_z)\in \C[\partial_\z]$, $q(\partial_\s)\in\C[\partial_\s]$, and $f(\s)\in \C[[\s]]$.
  Then 
  \begin{equation}
    \left[q(\partial_\s)\bullet(U(\s)f(\s))\right]_{|\s=\0}=\left[\left(U(\partial_\z)\star q(\partial_\s)\right)\bullet f(\s)\right]_{|\s=\0}.
  \end{equation}
\end{lemma}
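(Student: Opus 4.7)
The plan is to reduce by $\C$-linearity in $U(\partial_\z)$ to the case of monomials $U(\partial_\z) = \partial_\z^\llambda$, and then proceed by induction on the total degree $|\llambda|$. The base case $\llambda = \0$ is immediate: $U(\s) = 1$, $U(\partial_\z) \star q(\partial_\s) = q(\partial_\s)$, and both sides reduce to $[q(\partial_\s) \bullet f(\s)]_{|\s=\0}$.

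For the inductive step, write $\partial_\z^\llambda = \partial_{z_k}\, \partial_\z^{\llambda - \e_k}$ for some $k$ with $\lambda_k \geq 1$, so that $\s^\llambda = s_k\, \s^{\llambda - \e_k}$. Set $\widetilde{q}(\partial_\s) := \partial_\z^{\llambda - \e_k} \star q(\partial_\s)$. By associativity of $\star$ (Lemma \ref{lem:staroperation}(ii)),
\begin{equation}
\partial_\z^\llambda \star q(\partial_\s) = \partial_{z_k} \star \widetilde{q}(\partial_\s),
\end{equation}
and by Lemma \ref{lem:staroperation}(i), this equals $\widetilde{q}(\partial_\s)\, s_k - s_k\, \widetilde{q}(\partial_\s)$ as an element of $\C\langle \s, \partial_\s\rangle$. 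Applying this operator to $f(\s)$ and evaluating at $\s = \0$, the second summand $s_k\, \widetilde{q}(\partial_\s) f(\s)$ vanishes, leaving
\begin{equation}
\bigl[\bigl(\partial_\z^\llambda \star q(\partial_\s)\bigr) \bullet f(\s)\bigr]_{|\s=\0} = \bigl[\widetilde{q}(\partial_\s) \bullet \bigl(s_k\, f(\s)\bigr)\bigr]_{|\s=\0}.
\end{equation}

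Since $s_k f(\s) \in \C[[\s]]$ and $\partial_\z^{\llambda - \e_k}$ has strictly smaller degree, the inductive hypothesis applies with $U$ replaced by $\partial_\z^{\llambda-\e_k}$ and $f$ replaced by $s_k f$, giving
\begin{equation}
\bigl[\widetilde{q}(\partial_\s) \bullet \bigl(s_k\, f(\s)\bigr)\bigr]_{|\s=\0} = \bigl[q(\partial_\s) \bullet \bigl(\s^{\llambda - \e_k}\, s_k\, f(\s)\bigr)\bigr]_{|\s=\0} = \bigl[q(\partial_\s) \bullet \bigl(\s^\llambda f(\s)\bigr)\bigr]_{|\s=\0},
\end{equation}
which is exactly the desired right-hand side. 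This closes the induction.

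There is no real obstacle here beyond correctly chaining Lemma \ref{lem:staroperation}(i) and (ii); the only subtle point is to ensure that the induction hypothesis is applied to the new formal power series $s_k f(\s)$, not to $f(\s)$ itself, so that the factor $s_k$ gets absorbed into the argument on the right-hand side at each step. A direct monomial verification on $f(\s) = \s^\mmu$ and $q(\partial_\s) = \partial_\s^\nnu$ could serve as an alternative proof by linearity/continuity, but the commutator approach above is shorter and makes transparent the role of the $\star$-operation.
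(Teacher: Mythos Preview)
Your proof is correct and follows essentially the same approach as the paper: induction on the degree of the monomial $U(\partial_\z)=\partial_\z^\llambda$, using the commutator identity of Lemma~\ref{lem:staroperation}(i) together with associativity (ii). The only organizational differences are that the paper treats the degree-one case separately before the general step and runs the computation from the left-hand side to the right, whereas you absorb the degree-one case into a single inductive step and argue from right to left; neither difference is substantive. (One wording slip: the final expression you reach is the \emph{left}-hand side of the stated identity, not the right-hand side.)
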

\begin{proof}
  We show that the statement holds for any monomial operator $U(\partial_\z)=\partial_\z^\mmu$ by induction on $|\mmu|$.
Firstly, the assertion is clear for $\mmu=\0$.

Secondly, assume that $\mmu=\e_k$ for $k=1,\ldots,k$, hence $U(\partial_\z)=\partial_\z^{\mmu}=\partial_{z_k}$.
Note that $[s_k q(\partial_\s)\bullet f(\s)]_{|\s=\0}=0$ for any $k=1\ldots,h$, $q(\partial_\s)\in\C[\partial_\s]$, and $f(\s)\in\C[[ \s ]]$.
Thus, by Lemma \ref{lem:staroperation}, we see that
\begin{align}
  [q(\partial_\s)&\bullet \left(U(\s)f(\s)\right)]_{|\s=\0}\\
  &=\left[(q(\partial_\s)s_k)\bullet f(\s)\right]_{|\s=\0}\\
  &=\left[(s_k q(\partial_\s)+\partial_{z_k}\star q(\partial_\s))\bullet f(\s)\right]_{|\s=\0}\\
  &=\left[s_k (q(\partial_\s)\bullet f(\s))\right]_{|\s=\0}+\left[(\partial_{z_k}\star q(\partial_\s))\bullet f(\s)\right]_{|\s=\0}\\
  &=\left[(\partial_{z_k}\star q(\partial_\s))\bullet f(\s)\right]_{|\s=\0}.
\end{align}
Hence the assertion holds for $|\mmu|=1$.

Finally, fix $\mmu\in\N^h$ with $|\mmu|>1$.
Let $U(\partial_\z)=\partial_\z^\mmu$, $q(\partial_\s)\in \C[\partial_\s]$, and $f(\s)\in \C[[\s]]$.
Assume that the assertion holds for any $\widetilde{U}(\partial_\z)=\partial_{\z}^{\widetilde{\mmu}}$ with $|\widetilde{\mmu}|<|\mmu|$.
Then there exists $k$ such that $\mmu_k>0$. 
Applying the induction hypothesis to the operators $\partial_{z_k}$ and $\partial_\z^{\mmu-\e_k}$, respectively, we see from Lemma \ref{lem:staroperation} (ii) that
\begin{align}
  [q(\partial_\s)&\bullet \left(U(\s)f(\s)\right)]_{|\s=\0}\\
  &=\left[q(\partial_\s)\bullet \left(s_k \cdot \s^{\mmu-\e_k}f(\s)\right)\right]_{|\s=\0}\\
  &=\left[\left(\partial_{z_k}\star q(\partial_\s)\right)\bullet \left(\s^{\mmu-\e_k}f(\s)\right)\right]_{|\s=\0}\\
  &=\left[\left(\partial_\z^{\mmu-\e_k}\star\left(\partial_{z_k}\star q(\partial_\s)\right)\right)\bullet f(\s)\right]_{|\s=\0}\\
  &=\left[\left(U(\partial_\z)\star q(\partial_\s)\right)\bullet f(\s)\right]_{|\s=\0}.
\end{align}
Hence the assertion holds.
\end{proof}
We define a ring homomorphism $\Psi_B: \C[\partial_\s]\rightarrow \C[\x]$ as
\begin{equation}\label{eqn:Psi}
  \Psi_B(q(\partial_\s))(\x):=q(\x B)=q\left(\sum_{j=1}^n b_j^{(1)}x_j, \ldots, \sum_{j=1}^n b_j^{(h)}x_j\right)
\end{equation}
for $q(\partial_\s)\in \C[\partial_\s]$.
Note that $\Psi_B$ is injective by the linear independence of $B$.
\begin{proposition}\label{prop:startingterm}
  Let $q(\partial_\s)\in \C[\partial_\s]$. 
  Then
  \begin{equation}
    \left[q(\partial_\s)\bullet \left(m(\s)x^{\v+B\s}\right)\right]_{|\s=\0}=x^\v\Psi_B\left(m(\partial_\z)\star q(\partial_\s)\right)(\log \x),
  \end{equation}
  where $\log \x:=(\log x_1, \ldots, \log x_n)$.
\end{proposition}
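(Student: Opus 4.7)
The plan is to reduce the identity to two easy computations: first rewrite $x^{\v+B\s}$ as an exponential so that Lemma \ref{lem:Action} can strip off the factor $m(\s)$, and then evaluate the resulting differential operator applied to an exponential.

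First I would factor out $x^\v$ and rewrite $x^{B\s}$ as an exponential. Setting $y_k:=\sum_{j=1}^n b_j^{(k)}\log x_j$ for $k=1,\ldots,h$ and $\y:=(y_1,\ldots,y_h)$, we have
\begin{equation}
x^{\v+B\s}
=x^\v\exp\!\left(\sum_{k=1}^h s_k\sum_{j=1}^n b_j^{(k)}\log x_j\right)
=x^\v e^{\s\cdot \y}.
\end{equation}
Note that by the very definition \eqref{eqn:Psi} of $\Psi_B$, for any $r(\partial_\s)\in\C[\partial_\s]$ we have
\begin{equation}
r(\y)=r\!\left(\sum_j b_j^{(1)}\log x_j,\ldots,\sum_j b_j^{(h)}\log x_j\right)=\Psi_B(r)(\log\x).
\end{equation}

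Second I would apply Lemma \ref{lem:Action} to transfer $m(\s)$ onto $q(\partial_\s)$ via the $\star$-operation. Taking $U(\s)=m(\s)$ and $f(\s)=e^{\s\cdot \y}$, and pulling the $\s$-independent factor $x^\v$ outside, we obtain
\begin{equation}
\bigl[q(\partial_\s)\bullet(m(\s)x^{\v+B\s})\bigr]_{|\s=\0}
=x^\v\bigl[(m(\partial_\z)\star q(\partial_\s))\bullet e^{\s\cdot \y}\bigr]_{|\s=\0}.
\end{equation}

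Third I would evaluate the remaining bracket. Since $\partial_\s^\mmu\bullet e^{\s\cdot \y}=\y^\mmu e^{\s\cdot \y}$ for every $\mmu\in\N^h$, linearity gives $r(\partial_\s)\bullet e^{\s\cdot \y}=r(\y)e^{\s\cdot \y}$ for any $r(\partial_\s)\in\C[\partial_\s]$; setting $\s=\0$ yields $r(\y)$. Applied to $r(\partial_\s)=m(\partial_\z)\star q(\partial_\s)$ and combined with the identification $r(\y)=\Psi_B(r)(\log\x)$ from the first step, this gives exactly
\begin{equation}
\bigl[q(\partial_\s)\bullet(m(\s)x^{\v+B\s})\bigr]_{|\s=\0}
=x^\v\,\Psi_B(m(\partial_\z)\star q(\partial_\s))(\log\x),
\end{equation}
as required.

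There is essentially no obstacle; the only thing to be careful about is keeping the row/column conventions straight in the identification $\y=(\log\x)B$ so that it matches the definition of $\Psi_B$, and noting that $e^{\s\cdot \y}|_{\s=\0}=1$ so no additional terms survive after evaluation.
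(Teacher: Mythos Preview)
Your proof is correct and follows essentially the same route as the paper: rewrite $x^{\v+B\s}=x^\v e^{(\log\x)B\s}$, apply Lemma~\ref{lem:Action} to move $m(\s)$ onto $q(\partial_\s)$ via the $\star$-operation, and then use that differentiating an exponential in $\s$ multiplies by the corresponding polynomial in $(\log\x)B$, which is exactly $\Psi_B$ evaluated at $\log\x$. Your version is slightly more explicit in spelling out the exponential differentiation step, but the argument is the same.
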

\begin{proof}
  Note that we can regard $x^{\v+B\s}$ as the formal series $x^\v e^{(\log \x) B\s}$ in $\s$, where 
  \begin{equation}
    (\log \x)B\s:=\sum_{j=1}^{n}\sum_{k=1}^{h}(\log x_{j})b_{j}^{(k)}s_{k}.
  \end{equation}
  Put $r(\partial_\s):=m(\partial_\z)\star q(\partial_\s)$.
  Then, by Lemma \ref{lem:Action}, 
  \begin{align}
    \left[q(\partial_\s)\bullet \left(m(\s)x^{\v+B\s}\right)\right]_{|\s=\0}&=\left[\left(m(\partial_\z)\star q(\partial_\s)\right)\bullet x^{\v+B\s}\right]_{|\s=\0}\\
    &=\left[r(\partial_\s)\bullet x^{\v+B\s}\right]_{|\s=\0}\\
    &=[r((\log \x)B)x^{\v+B\s}]_{|\s=\0}\\
    &=x^\v\Psi_B (r(\partial_\s))(\log \x).
  \end{align}
\end{proof}

\begin{proposition}\label{prop:Key2}
  The following hold.
  \begin{enumerate}
    \item[(i)] $m(\partial_\z)\star P_\cN^\perp\subset P_B^\perp\subset P_\cN^\perp$.
    In particular, if $K_{\cN}=I_\0$, then $P_\cN^\perp = P_B^\perp$. 
    \item[(ii)] $m(\s)\in P_\cN$ if and only if $m(\partial_\z)\star P_\cN^\perp=\{0\}$.
    \item[(iii)] If $P_\cN=m(\s)\cdot P_B$, then $m(\partial_\z)\star P_\cN^\perp= P_B^\perp$.
  \end{enumerate}
\end{proposition}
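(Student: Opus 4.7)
My plan is to leverage the chain $m(\s)\cdot P_B\subset P_\cN\subset P_B$ from Proposition \ref{prop:Key1}(i), the adjunction formula of Lemma \ref{lem:Action}, the duality Lemma \ref{lem:duality}, and the lifting Lemma \ref{lem:staroperation}(iii). A preliminary observation I would record first is that $m(\s)=(B\s)^{I_\0\setminus K_\cN}$ is an honest product of \emph{nonzero} linear forms: for any $j\in I_\0\setminus\supp(B)$ and any $I=I_\u\in\cN$, Assumption \ref{ass3} forces $j\in I_\u$, so $j\in K_\cN$; hence $I_\0\setminus K_\cN\subset\supp(B)$ and each factor $(B\s)_j$ is a nonzero linear form in $\s$.

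For (i), the inclusion $P_B^\perp\subset P_\cN^\perp$ is immediate from $P_\cN\subset P_B$ via Lemma \ref{lem:duality}. For the inclusion $m(\partial_\z)\star P_\cN^\perp\subset P_B^\perp$, I take $q\in P_\cN^\perp$ and $p\in P_B$; since $m(\s)p\in m(\s)\cdot P_B\subset P_\cN$, Lemma \ref{lem:Action} gives
\begin{equation}
[(m(\partial_\z)\star q)\bullet p]_{|\s=\0}=[q\bullet(m(\s)p)]_{|\s=\0}=0,
\end{equation}
so $m(\partial_\z)\star q\in P_B^\perp$. When $K_\cN=I_\0$, $m(\s)=(B\s)^\emptyset=1$, so $m(\s)\cdot P_B=P_B$, which forces $P_\cN=P_B$ by Proposition \ref{prop:Key1}(i), and hence $P_\cN^\perp=P_B^\perp$.

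For (ii), I would exploit the non-degeneracy of the pairing $(f,q)\mapsto[q(\partial_\s)\bullet f]_{|\s=\0}$ between $\C[\s]$ and $\C[\partial_\s]$ that underlies Lemma \ref{lem:Basis_of_Pperp} and the division-algorithm argument in the proof of Lemma \ref{lem:duality}. Under this pairing, $m(\partial_\z)\star P_\cN^\perp=\{0\}$ is equivalent to $[(m(\partial_\z)\star q)\bullet f]_{|\s=\0}=0$ for all $q\in P_\cN^\perp$ and $f\in\C[\s]$; by Lemma \ref{lem:Action} this becomes $[q\bullet(m(\s)f(\s))]_{|\s=\0}=0$ for all such $q,f$, which by the duality says $m(\s)f(\s)\in P_\cN$ for every $f\in\C[\s]$. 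Since $P_\cN$ is an ideal, this is the same as $m(\s)\in P_\cN$ (the forward direction specializes $f=1$; the reverse uses that $P_\cN$ is closed under multiplication by $\C[\s]$).

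For (iii), given $P_\cN=m(\s)\cdot P_B$ and the containment $m(\partial_\z)\star P_\cN^\perp\subset P_B^\perp$ already proved in (i), only the reverse inclusion remains. For $q\in P_B^\perp$, the preliminary observation together with Lemma \ref{lem:staroperation}(iii) applied to the factorization of $m(\partial_\z)$ into nonzero linear forms produces $r\in\C[\partial_\s]$ with $m(\partial_\z)\star r=q$; it then suffices to check $r\in P_\cN^\perp$. Every $p\in P_\cN$ factors as $p=m(\s)\tilde p$ with $\tilde p\in P_B$, so Lemma \ref{lem:Action} yields
\begin{equation}
[r\bullet p]_{|\s=\0}=[(m(\partial_\z)\star r)\bullet\tilde p]_{|\s=\0}=[q\bullet\tilde p]_{|\s=\0}=0.
\end{equation}
I do not anticipate a major obstacle: every step assembles lemmas already in place. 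The only point requiring care is the non-degeneracy of the pairing used in (ii), which I would justify by quoting the explicit dual bases of Lemma \ref{lem:Basis_of_Pperp} rather than re-deriving them.
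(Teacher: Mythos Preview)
Your proposal is correct and follows essentially the same route as the paper: part (i) via Proposition \ref{prop:Key1}(i), Lemma \ref{lem:duality}, and Lemma \ref{lem:Action}; part (ii) via Lemma \ref{lem:Action} together with the duality (the paper phrases the converse as $P_\cN^\perp\subset\langle m(\s)\rangle^\perp$ and then invokes Lemma \ref{lem:duality}, which is equivalent to your use of $(P_\cN^\perp)^\perp=P_\cN$); and part (iii) via Lemma \ref{lem:staroperation}(iii) to lift and Lemma \ref{lem:Action} to verify membership in $P_\cN^\perp$. Your preliminary observation that $I_\0\setminus K_\cN\subset\supp(B)$, so that $m(\s)$ is a product of genuinely nonzero linear forms, is a point the paper uses implicitly when applying Lemma \ref{lem:staroperation}(iii) but does not spell out, so it is a welcome clarification.
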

\begin{proof}
  (i)\ $P_B^\perp \subset P_\cN^\perp$ is clear by Lemma \ref{prop:Key1} (i) and Lemma \ref{lem:duality}.
  Let $q(\partial_\s)\in P_\cN^\perp$. 
  Then, for any $f(\s)\in P_B$, Lemma \ref{lem:Action} shows that
  \begin{equation}
    \left[\left(m(\partial_\z)\star q(\partial_\s)\right)\bullet f(\s)\right]_{|\s=\0}=\left[q(\partial_\s)\bullet \left(m(\s)f(\s)\right)\right]_{|\s=\0}.
  \end{equation}
  It follows from Lemma \ref{prop:Key1} (i) that the right hand side is $0$.
  Hence we have $m(\partial_\z)\star P_\cN^\perp\subset P_B^\perp$.

  (ii)\ Assume that $m(\s)\in P_\cN$.
  Let $q(\partial_\s)\in P_\cN^\perp$. Put $m(\partial_\z)\star q(\partial_s)=\sum_{\nnu}a_\nnu \partial_\s^\nnu$, where $a_\nnu\in\C$.
  Then, by Lemma \ref{lem:Action}, we have
  \begin{align}
    \nnu!a_\nnu &= \left[\left(m(\partial_\z)\star q(\partial_s)\right)\bullet \s^\nnu\right]_{|\s=\0} \\
    &=\left[q(\partial_\s)\bullet \left(m(\s)\s^\nnu\right)\right]_{|\s=\0}=0
  \end{align}
  for any $\nnu\in\N^h$.
  Hence, we have $m(\partial_\z)\star q(\partial_s)=0$.

  Conversely, assume that $m(\partial_\z)\star P_\cN^\perp=\{0\}$. Let $q(\partial_\s)\in P_\cN^\perp$.
  Then, Lemma \ref{lem:Action} shows that 
  \begin{equation}
    \left[q(\partial_\s)\bullet \left(m(\s)f(\s)\right)\right]_{|\s=\0}=\left[\left(m(\partial_\z)\star q(\partial_s)\right)\bullet f(\s)\right]_{|\s=\0}=0
  \end{equation}
  for any $f(\s)\in\C[\s]$.
  Thus we have $q(\partial_\s)\in \langle m(\s)\rangle^\perp$, that is, $P_\cN^\perp \subset \langle m(\s)\rangle^\perp$. 
  By Lemma \ref{lem:duality}, $m(\s)\in P_\cN$.

  (iii)\ In (i), we have seen $m(\partial_\z)\star P_\cN^\perp\subset P_B^\perp$. 
  We show its reverse inclusion.
  Let $q(\partial_\s)\in P_B^\perp$.
  By Lemma \ref{lem:staroperation} (iii), there exists $r(\partial_\s)\in\C[\partial_\s]$ such that $q(\partial_\s)=m(\partial_\z)\star r(\partial_\s)$. It suffices to show that $r(\partial_\s)\in P_\cN^\perp$.
  Let $f(\s)\in P_\cN$. By the assumption, we have $f(\s)=m(\s)g(\s)$ for some $g(\s)\in P_B$.
  By Lemma \ref{lem:Action}, we see that
  \begin{align}
    [r(\partial_\s)\bullet f(\s)]_{|\s=\0}&=[r(\partial_\s)\bullet (m(\s)g(\s))]_{|\s=\0}\\ 
    &=\left[\left(m(\partial_\z)\star r(\partial_s)\right)\bullet g(\s)\right]_{|\s=\0}\\
    &=\left[q(\partial_s)\bullet g(\s)\right]_{|\s=\0}=0.
  \end{align} 
  Hence we have the assertion.
\end{proof}

\begin{example}(cf.\,\cite[Examples 3.2 and 4.7]{Log})
  \label{ex:non_C-Mcase}
  Let $A=\begin{bmatrix} 1 & 1 & 1 & 1 \\ 0 & 1 & 3 & 4 \end{bmatrix}$ and let $\w=(3, 1, 0, 0)$.
  Then the reduced Gr\"{o}bner basis of $I_A$ is 
  \[
    \cG=\{\underline{\partial_{x_1}\partial_{x_3}^2}-\partial_{x_2}^2\partial_{x_4}, \underline{\partial_{x_2}\partial_{x_4}^2}-\partial_{x_3}^3, \underline{\partial_{x_1}^2\partial_{x_3}}-\partial_{x_2}^3,  \underline{\partial_{x_1}\partial_{x_4}}-\partial_{x_2}\partial_{x_3}\}.
  \]
  Here underlined terms are the leading ones.
  Thus we have
  \begin{equation}
    \ini_\w(I_A)=\langle \partial_{x_1}\partial_{x_3}^2, \partial_{x_2}\partial_{x_4}^2, \partial_{x_1}^2\partial_{x_3}, \partial_{x_1}\partial_{x_4} \rangle.
  \end{equation}
  Put
  \begin{equation}
    \begin{array}{ll}
      \g^{(1)}=(1, -2, 2, -1)^T, &\g^{(2)}=(0, 1, -3, 2)^T,\\[10pt]
      \g^{(3)}=(2, -3, 1, 0)^T, &\g^{(4)}=(1, -1, -1, 1)^T.
    \end{array}
  \end{equation}
  Let $\bbeta=(-2, -1)^T$, and let
  \begin{equation}
    B=(\g^{(1)}, \g^{(2)})=\begin{bmatrix} 1 & 0 \\ -2 & 1 \\ 2 & -3 \\ -1 & 2 \end{bmatrix}.
  \end{equation}
  Note that $\supp(B)=\{1,2,3,4\}$.
  Take $\v=(0,-2,-1,1)^T$ as a fake exponent.
  Then we have 
  \begin{align}
    \cN&=\{\{2\},\{3\},\{2,3\}=I_\0\},\\
    \cN^c&=\{\{1,2\},\{1,3\},\{1,4\},\{2,4\},\{1,2,4\},\{1,3,4\}\},\\ 
    K_\cN&=\emptyset.
  \end{align}
  Furthermore, we have
  \begin{equation}
    \begin{array}{ll}
    G^{(1)}=I_{-\g^{(1)}}\setminus I_\0=\{1\}, & G^{(2)}=I_{-\g^{(1)}}\setminus I_\0=\{4\}, \\[10pt]
    G^{(3)}=I_{-\g^{(3)}}\setminus I_\0=\{1\}, & G^{(4)}=I_{-\g^{(4)}}\setminus I_\0=\{1\}.
    \end{array}
  \end{equation}
  Thus the ideals $P_\cN$ and $P_B$ are
  \begin{align}
    P_\cN &=\langle (B\s)^{\{1,2\}}, (B\s)^{\{1,3\}}, (B\s)^{\{2,4\}}\rangle\\ 
    &=\langle s_1(-2s_1+s_2), s_1(2s_1-3s_2),(-2s_1+s_2)(-s_1+2s_2)\rangle\\ 
    &=\langle s_1^2, s_1s_2,s_2^2\rangle
  \end{align}
  and
  \begin{equation}
    P_B =\langle (B\s)^{\{1\}}, (B\s)^{\{4\}}\rangle=\langle s_1, -s_1+2s_2\rangle=\langle s_1, s_2\rangle,
  \end{equation}
  respectively.
  The orthogonal complements $P_\cN^\perp$ and $P_B^\perp$ are
  \begin{equation}
    P_\cN^\perp=\C 1+\C \partial_{s_1}+\C \partial_{s_2}
  \end{equation}
  and 
  \begin{equation}
    P_B^\perp=\C 1,
  \end{equation}
  respectively.
  In this case, note that  
  \begin{equation}
    m(\s)=(B\s)^{I_\0\setminus K_\cN}=(B\s)^{\{2,3\}}=(-2s_1+s_2)(2s_1-3s_2)\in P_\cN.
  \end{equation}
  Hence, by Proposition \ref{prop:Key2}, we have
  \begin{equation}
    m(\partial_\z)\star P_\cN^\perp=\{0\}.
  \end{equation}
\end{example}

\begin{lemma}\label{lem:homogeneous_q}
  Let $q(\z)\in\C[\z]:=\C[z_1,\ldots,z_h]$ be a homogeneous polynomial in indeterminates $\z$ of degree $r$.
  Then 
  \begin{equation}
    q(\partial_\s)\bullet \left(\frac{1}{r!}(\x B\s)^{r}\right)=q(\x B)=\Psi_B(q(\partial_\s))(\x).
  \end{equation}
  Here, 
  \begin{equation}
    \x B\s:=\sum_{j=1}^{n}\sum_{k=1}^{h}x_{j}b_{j}^{(k)}s_{k}
  \end{equation}
  denotes the quadratic form associated with $B$.
\end{lemma}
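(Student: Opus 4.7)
The plan is to unwind both sides in coordinates after introducing the abbreviation $\y := \x B$, i.e.\ $y_k = \sum_{j=1}^n x_j b_j^{(k)}$ for $k=1,\ldots,h$. With this notation
\[
\x B\s = \sum_{k=1}^h y_k s_k = \y\cdot\s,
\]
so the multinomial theorem gives
\[
\frac{1}{r!}(\x B\s)^r \;=\; \frac{1}{r!}(\y\cdot\s)^r \;=\; \sum_{\nnu\in\N^h,\,|\nnu|=r}\frac{\y^{\nnu}}{\nnu!}\,\s^{\nnu}.
\]

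By linearity of both sides of the asserted identity in $q$, it suffices to verify the claim when $q(\z)=\z^{\mmu}$ is a single monomial with $|\mmu|=r$. For such $q$, using $\partial_\s^{\mmu}\bullet\s^{\nnu}=\mmu!\,\delta_{\mmu,\nnu}$ when $|\mmu|=|\nnu|$, we compute
\[
q(\partial_\s)\bullet\Bigl(\frac{1}{r!}(\x B\s)^{r}\Bigr)
=\sum_{|\nnu|=r}\frac{\y^{\nnu}}{\nnu!}\,(\partial_\s^{\mmu}\bullet \s^{\nnu})
=\frac{\y^{\mmu}}{\mmu!}\cdot\mmu! =\y^{\mmu}=q(\y)=q(\x B).
\]
Finally, the definition \eqref{eqn:Psi} of $\Psi_B$ directly gives $\Psi_B(q(\partial_\s))(\x)=q(\x B)$, completing the verification for monomials, and hence by linearity for every homogeneous $q$ of degree $r$.

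There is essentially no obstacle: the homogeneity assumption is what ensures that only the multi-indices $\nnu$ of the same total degree as $\mmu$ contribute, making the pairing $\partial_\s^{\mmu}\bullet\s^{\nnu}$ land on the Kronecker delta rather than on lower-degree monomials that would survive the evaluation. The only minor care needed is to keep the factor $1/r!$ straight when expanding $(\y\cdot\s)^r$; the multinomial coefficients $\binom{r}{\nnu}=r!/\nnu!$ cancel the $1/r!$ and leave precisely the factor $1/\nnu!$ that, upon applying $\partial_\s^{\mmu}$, is cancelled by $\mmu!=\nnu!$.
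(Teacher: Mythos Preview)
Your proof is correct. The paper proceeds differently: it argues by induction on $r$, using the chain rule $\partial_{s_k}\bullet\bigl(\tfrac{1}{r!}(\x B\s)^r\bigr)=(\x B)_k\cdot\tfrac{1}{(r-1)!}(\x B\s)^{r-1}$ and then applying the induction hypothesis to $\partial_\s^{\mmu-\e_k}$. Your approach instead expands $(\y\cdot\s)^r$ once via the multinomial theorem and reads off the answer from the orthogonality $\partial_\s^{\mmu}\bullet\s^{\nnu}=\mmu!\,\delta_{\mmu,\nnu}$ for $|\mmu|=|\nnu|$. The two are equally elementary; your version is a one-shot computation and makes the role of homogeneity (only $|\nnu|=r$ terms appear, so the pairing lands on a constant) more transparent, while the paper's induction sidesteps the multinomial expansion and the bookkeeping of the $1/\nnu!$ factors altogether.
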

\begin{proof}
  We show the assertion by induction on $r$.
  In the case of $r=1$, the assertion is clear.
  Fix $r>1$ and assume that the assertion holds for any homogeneous polynomial of degree less than $r$. 
  Let $\mmu\in\N^h$ with $|\mmu|=r$ and $\mu_k>0$. 
  Then, by the chain rule and the induction hypothesis, we see that 
  \begin{align}
    {\partial_{s}}^{\mmu}\bullet \left(\frac{1}{r!}(\x B\s)^{r}\right)&={\partial_{s}}^{\mmu-\e_{k}}\bullet \left(\partial_{s_k}\bullet\left(\frac{1}{r!}(\x B\s)^{r}\right)\right)\\
    &=(\x B)_k \cdot{\partial_\s}^{\mmu-\e_k}\bullet\left(\frac{1}{(r-1)!} (\x B\s)^{r-1}\right)\\
    &=(\x B)_k (\x B)^{\mmu-\e_k}=(\x B)^{\mmu}=\Psi_B(\partial_\s^\mmu)(\x).
  \end{align}
\end{proof}
Two vector spaces $Q_\v^\perp$ and $P_B^\perp$ are related as follows.
\begin{theorem}
  \label{thm:Pperp Qperp}
  Let $\Psi_B$ be the homomorphism in \eqref{eqn:Psi}.
  Then, $P_B^\perp=\Psi_B^{-1}(Q_\v^\perp)$ and  $\dim_\C(P_B^\perp)\leq \dim_\C(Q_\v^\perp)$.
  Furthermore, if $B$ is a basis of $L$, then $\Psi_B(P_B^\perp)=Q_\v^\perp$ and $\dim_\C(Q_\v^\perp)=\dim_\C(P_B^\perp)$.
\end{theorem}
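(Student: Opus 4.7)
The plan is to establish a reciprocity identity linking the two pairings $(h,q)\mapsto [q(\partial_\s)\bullet h]_{\s=\0}$ and $(f,\phi)\mapsto [\phi(\partial_\x)\bullet f]_{\x=\0}$, deduce from it that $P_B^\perp=\Psi_B^{-1}(Q_\v^\perp)$, and then, in the basis case, identify the image of $\Psi_B$ with $\langle A\theta_\x\rangle^\perp$ to obtain surjectivity onto $Q_\v^\perp$.

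First I would prove the reciprocity identity
\begin{equation*}
\bigl[\phi(\partial_\x)\bullet \Psi_B(q)(\x)\bigr]_{\x=\0}=\bigl[q(\partial_\s)\bullet \Phi_B(\phi)(\s)\bigr]_{\s=\0}
\end{equation*}
for all $\phi(\theta_\x)\in\C[\theta_\x]$ and $q(\partial_\s)\in\C[\partial_\s]$. By bilinearity and a degree count, both sides vanish unless $\phi$ and $q$ are homogeneous of the same degree $r$, so it suffices to treat that case. Lemma \ref{lem:homogeneous_q} rewrites $\Psi_B(q)(\x)=q(\partial_\s)\bullet\frac{1}{r!}(\x B\s)^r$; the same induction argument with the roles of $\x$ and $\s$ exchanged gives $\Phi_B(\phi)(\s)=\phi(\partial_\x)\bullet\frac{1}{r!}(\x B\s)^r$. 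Since $\phi(\partial_\x)$ and $q(\partial_\s)$ involve disjoint sets of variables they commute, and both sides of the identity reduce to the common constant $\phi(\partial_\x)q(\partial_\s)\bullet\frac{1}{r!}(\x B\s)^r$.

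Next I would observe that since $Q_\v$ is a homogeneous ideal, ``$\phi(\partial_\x)f=0$ as a polynomial for every $\phi\in Q_\v$'' is equivalent to ``$[\phi(\partial_\x)\bullet f]_{\x=\0}=0$ for every $\phi\in Q_\v$'' (apply the latter to $\theta_\x^\u\phi\in Q_\v$ for all $\u\in\N^n$ to recover every Taylor coefficient). Combined with $P_B=\Phi_B(Q_\v)$, the reciprocity identity then gives
\begin{align*}
\Psi_B(q)\in Q_\v^\perp
&\iff [\phi(\partial_\x)\bullet \Psi_B(q)]_{\x=\0}=0\ \text{for all}\ \phi\in Q_\v\\
&\iff [q(\partial_\s)\bullet \Phi_B(\phi)]_{\s=\0}=0\ \text{for all}\ \phi\in Q_\v\\
&\iff [q(\partial_\s)\bullet h]_{\s=\0}=0\ \text{for all}\ h\in P_B\\
&\iff q\in P_B^\perp.
\end{align*}
Thus $P_B^\perp=\Psi_B^{-1}(Q_\v^\perp)$, and the injectivity of $\Psi_B$ (from the linear independence of $B$) makes $\Psi_B|_{P_B^\perp}$ an injection into $Q_\v^\perp$, so $\dim_\C P_B^\perp\le\dim_\C Q_\v^\perp$.

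Finally, assume $B$ is a basis of $L$. Then $\Psi_B(\C[\partial_\s])$ is the $\C$-subalgebra of $\C[\x]$ generated by the $h=n-d$ linearly independent linear forms $(\x B)_1,\ldots,(\x B)_h$; these span the same $(n-d)$-dimensional space of linear forms as $\x\g^{(1)},\ldots,\x\g^{(m)}$, so $\Psi_B(\C[\partial_\s])=\C[\x G]$, which equals $\langle A\theta_\x\rangle^\perp$ by \cite[Lemma 5.1]{LogFree}. Since $\langle A\theta_\x\rangle\subset Q_\v$, we have $Q_\v^\perp\subset\Psi_B(\C[\partial_\s])$. For any $f\in Q_\v^\perp$, choosing $q\in\C[\partial_\s]$ with $\Psi_B(q)=f$ forces $q\in\Psi_B^{-1}(Q_\v^\perp)=P_B^\perp$, so $f\in\Psi_B(P_B^\perp)$; hence $\Psi_B(P_B^\perp)=Q_\v^\perp$ and the dimension equality follows. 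The main obstacle is setting up the reciprocity identity cleanly; once it is in hand, everything reduces to formal manipulation of the pairings.
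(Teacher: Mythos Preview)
Your proof is correct, and while it shares the same key ingredient as the paper---Lemma \ref{lem:homogeneous_q} expressing $\Psi_B(q)$ through the kernel $\frac{1}{r!}(\x B\s)^r$---it is organized differently. The paper invokes Proposition \ref{prop3} to describe $Q_\v^\perp$ concretely via the conditions $f\in\C[\x G]$ and $\partial_\x^{G^{(i)}}\bullet f=0$, then expands $(\x B\s)^r$ multinomially and checks termwise that $\partial_\x^{G^{(i)}}\bullet q(\x B)=0$ for all $i$ is equivalent to $q(\partial_\s)\bullet(B\s)^\mmu=0$ for all $\mmu$ with $\supp(\mmu)\supset G^{(i)}$; the last step then requires arguing that such $(B\s)^\mmu$ span $P_B$. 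Your reciprocity identity $[\phi(\partial_\x)\bullet\Psi_B(q)]_{\x=\0}=[q(\partial_\s)\bullet\Phi_B(\phi)]_{\s=\0}$ (obtained by observing that Lemma \ref{lem:homogeneous_q} has a twin with $\x$ and $\s$ swapped) lets you work directly from $P_B=\Phi_B(Q_\v)$ without unpacking the generators, and replaces the multinomial bookkeeping by a one-line chain of equivalences. This is more conceptual and makes the duality between $\Phi_B$ and $\Psi_B$ explicit. For the basis case your argument and the paper's coincide: both use that $\g^{(k)}\in L=\mathrm{span}_\C B$ to get $Q_\v^\perp\subset\C[\x G]\subset\Im\Psi_B$ (you cite \cite[Lemma~5.1]{LogFree} directly; the paper routes this through Proposition \ref{prop3}(i), which rests on the same lemma).
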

\begin{proof}
  Let $\deg(q(\z))=r$. Then, it follows from Lemma \ref{lem:homogeneous_q} that
  \begin{align}
    \partial_\x^{G^{(i)}}&\bullet q(\x B)=\partial_\x^{G^{(i)}}\bullet \left\{q(\partial_\s)\bullet \left(\frac{1}{r!}(\x B\s)^{r}\right)\right\}\\
    &=q(\partial_\s)\bullet\left\{\partial_\x^{G^{(i)}}\bullet \left(\frac{1}{r!}(\x B\s)^{r}\right)\right\}\\
    &=q(\partial_\s)\bullet\left\{\partial_\x^{G^{(i)}}\bullet \left(\frac{1}{r!}\sum_{\mmu\in \N^n; |\mmu|=r}\frac{r!}{\mmu!} \x^\mmu(B\s)^\mmu\right)\right\}\\
    &=q(\partial_\s)\bullet\left\{\sum_{\mmu\in \N^n; |\mmu|=r}\frac{\partial_\x^{G^{(i)}}\bullet\x^\mmu}{\mmu!} (B\s)^\mmu\right\}\\
    &=q(\partial_\s)\bullet\left\{\sum_{\mmu\in\N^n;|\mmu|=r, \supp(\mmu)\supset G^{(i)}}\frac{\x^{\mmu-\e_{G^{(i)}}}}{(\mmu-\e_{G^{(i)}})!}(B\s)^{\mmu}\right\}\\
    &=\sum_{\mmu\in\N^n;|\mmu|=r, \supp(\mmu)\supset G^{(i)}}\frac{\x^{\mmu-\e_{G^{(i)}}}}{(\mmu-\e_{G^{(i)}})!}\left\{q(\partial_\s)\bullet(B\s)^{\mmu}\right\}
  \end{align}
  for any $i$.
  Here $\e_{G^{(i)}}:=\sum_{j\in G^{(i)}}\e_j$ denotes the indicator vector of $G^{(i)}$.
  Since $\partial_\s^{\p}\bullet \s^\q=\p!\delta_{\p,\q}$ for any $\p,\q\in\N^{h}$,
  by Proposition \ref{prop3} we have  
  \begin{align}
    &q(\x B)\in Q_\v^\perp\iff \partial_\x^{G^{(i)}}\bullet q(\x B)=0\ \text{for all}\ i=1,\ldots,m\\
    \iff & q(\partial_\s)\bullet(B\s)^{\mmu}=0\\
    &\displaystyle \qquad \text{for all}\ i\ \text{and all}\ \mmu \in\N^n\ \text{with}\ |\mmu|=r, \supp(\mmu)\supset G^{(i)}\\
    \iff & q(\partial_\s)\in P_B^\perp.
  \end{align}
  Here, the second equivalence follows from the linear independence of the monomials $\x^\mmu$.
  The third equivalence follows from the linear independence of $B$, because it yields that the ideal $P_B$ is spanned as a vector space by the polynomials whose terms are of the form $(B\s)^\mmu$ with $\supp(\mmu)\supset G^{(i)}$ for some $i$.
  Thus we have $P_B^\perp=\Psi_B^{-1}(Q_\v^\perp)$. 
  Moreover, we have the inequality $\dim_\C(P_B^\perp)\leq \dim_\C(Q_\v^\perp)$ because $\Psi_B({P_B}^\perp)=\Psi_B\left(\Psi_B^{-1}(Q_\v^\perp)\right)\subset Q_\v^\perp$ and $\Psi_B$ is injective.

  Assume that $B$ is a basis of $L$. 
  Note that each $\x \g^{(k)}=\sum_{j=1}^{n}g_{j}^{(k)}x_j$ can be represented by a linear combination of $(\x B)_1, \ldots, (\x B)_h$.
  Let $f(\x)\in Q_\v^\perp$. Then, by Proposition \ref{prop3} and the above result, there exists $q(\partial_\s)\in P_B^\perp$ such that $f(\x)=q(\x B)$. Thus we have $f(\x)=\Psi_B(q(\partial_\s))(\x)$.
\end{proof}

\section{Fundamental systems of solutions}
In this section, we construct a fundamental system of series solutions with a given exponent to $M_A(\bbeta)$. 
We recall that the homogeneity of $A$ yields the regular holonomicity of $M_{A}(\bbeta)$.
This means that, for a fixed generic weight $\w$, the solution space to $M_A(\bbeta)$ has a basis consisting of canonical series with starting monomial $x^\v(\log \x)^\b$ for some exponent $\v$ and $\b\in\N^n$.
Note that each $x^\v(\log \x)^\b$ is derived as the initial monomial of a solution to the indicial ideal $\ind_\w(H_A(\bbeta))_\v$, or of an element of $Q_\v^\perp$.     
For the detail, see \cite[Sections 2.3, 2.4, and 2.5]{SST} and Proposition \ref{prop3}. 

Throughout this section, we assume that $B$ is a basis of $L$. 
Since $B$ satisfies Assumption \ref{ass3} (see Remark \ref{rem:rem_to_ass3}), we have the following homomorphisms by Propositions \ref{prop3}, \ref{prop:startingterm}, and Theorem \ref{thm:Pperp Qperp}:
\begin{equation}\label{eqn:homomorphism2}
\begin{array}{ccccc}
  P_\cN^\perp &\rightarrow & P_B^\perp &\simeq &\mathrm{Sol}(\find_\w(H_A(\bbeta))_\v),\\
  q(\partial_\s) & \mapsto & m(\partial_\z)\star q(\partial_\s) & \leftrightarrow & x^\v\Psi_B(m(\partial_\z)\star q(\partial_\s))(\log \x).
\end{array}
\end{equation}
Here, $\mathrm{Sol}(\find_\w(H_A(\bbeta))_\v)$ denotes the solution space of the fake indicial ideal $\find_\w(H_A(\bbeta))_\v$.

\begin{proposition}
  \label{prop:exponent}
  If $m(\s)\notin P_\cN$, then $\v$ is an exponent. 
\end{proposition}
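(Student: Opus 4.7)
The strategy is to exhibit a nonzero formal solution to $M_A(\bbeta)$ whose starting monomial (in the $\w$-lowest terms) has $x^\v$ as its exponential factor. The given hypothesis $m(\s)\notin P_\cN$ will be fed into Proposition \ref{prop:Key2}(ii) to produce a nonzero element of $m(\partial_\z)\star P_\cN^\perp$, and this will serve as the seed from which a canonical solution with exponent $\v$ is built by applying Theorem \ref{thm:refinement} and reading off the starting term via Proposition \ref{prop:startingterm}.

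First I would invoke Proposition \ref{prop:Key2}(ii): since $m(\s)\notin P_\cN$, there exists $q(\partial_\s)\in P_\cN^\perp$ with $m(\partial_\z)\star q(\partial_\s)\neq 0$. Then by Theorem \ref{thm:refinement},
\[
\phi(\x):=\left(q(\partial_\s)\bullet \widetilde{F}_\cN(\x,\s)\right)_{|\s=\0}
\]
is a formal series solution to $M_A(\bbeta)$. The task then reduces to showing $\phi\neq 0$ and that its starting term has $x^\v$ as its monomial factor.

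Second, I would separate off the $\u=\0$ summand in $\widetilde{F}_\cN(\x,\s)=m(\s)F_\cN(\x,\s)$. Since $a_\0(\s)=1$, this summand is $m(\s)x^{\v+B\s}$, and by Proposition \ref{prop:startingterm} its contribution to $\phi$ is
\[
x^\v\,\Psi_B\!\left(m(\partial_\z)\star q(\partial_\s)\right)(\log \x).
\]
Every other summand in $\widetilde{F}_\cN$ is of the form $m(\s)a_\u(\s)x^{\v+B\s+\u}$ with $\u\in L'\setminus\{\0\}$. Since $L'\subset C(\w)$ and each generator $\g^{(i)}$ of $C(\w)$ satisfies $\g^{(i)}_+\cdot\w>\g^{(i)}_-\cdot\w$ (because $\partial^{\g^{(i)}_+}$ is the leading monomial of a Gr\"obner basis element with respect to $\w$), every nonzero element of $C(\w)$ has strictly positive $\w$-weight. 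Therefore the displayed expression above is the unique $\w$-lowest weight contribution to $\phi$.

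Finally, I would conclude that this starting contribution is nonzero: by construction $m(\partial_\z)\star q(\partial_\s)\neq 0$, and $\Psi_B$ is injective by the linear independence of the basis $B$ of $L$, so $\Psi_B(m(\partial_\z)\star q(\partial_\s))$ is a nonzero polynomial in the linear forms $(\x B)_1,\ldots,(\x B)_h$, hence a nonzero polynomial in $\log x_1,\ldots,\log x_n$ after substitution. Thus some monomial $x^\v(\log\x)^\b$ appears in $\phi$ with nonzero coefficient, proving that $\v$ is an exponent. The only delicate point is ensuring that no other $\u\in L'$ contributes to the $\w$-minimal part and obscures the $x^\v$ factor; this is handled by the $C(\w)$-positivity argument above, and once that is in place the rest is bookkeeping.
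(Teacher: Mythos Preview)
Your proof is correct and follows essentially the same route as the paper: invoke Proposition~\ref{prop:Key2}(ii) to get $q(\partial_\s)\in P_\cN^\perp$ with $m(\partial_\z)\star q(\partial_\s)\neq 0$, apply Theorem~\ref{thm:refinement} to produce a solution, and use Proposition~\ref{prop:startingterm} to identify its nonzero starting term. You are slightly more explicit than the paper in justifying why the $\u\neq\0$ summands lie strictly above the $\u=\0$ term in $\w$-weight (via the $C(\w)$-positivity argument) and why the starting term is nonzero (via the injectivity of $\Psi_B$); the paper leaves these points implicit.
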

\begin{proof}
Assume that $m(\s)\notin P_\cN$.
Then, by Proposition \ref{prop:Key2} (ii), there exists $q(\partial_\s)\in P_\cN^\perp$ such that $m(\partial_\z)\star q(\partial_\s)\ne 0$.
We see from Theorem \ref{thm:refinement} that 
\begin{align}
  (q(\partial_\s)&\bullet \widetilde{F}_{\cN}(\x,\s))_{|\s=\0}\\
  &=\sum_{\u\in L'}\left(q(\partial_\s)\bullet(m(\s)a_\u(\s)x^{\v+B\s+\u})\right)_{|\s=\0}\\
  &=(q(\partial_\s)\bullet (m(\s)x^{\v+B\s})_{|\s=\0}\\
  &\qquad\qquad +\sum_{\u\in L'\setminus\{0\}}\left(q(\partial_\s)\bullet(m(\s)a_\u(\s)x^{\v+B\s+\u})\right)_{|\s=\0}
\end{align}
is a solution to $M_A(\bbeta)$.
By Proposition \ref{prop:startingterm}, we have
\begin{equation}
  (q(\partial_\s)\bullet (m(\s)x^{\v+B\s})_{|\s=\0}
  =x^\v\Psi_B(m(\partial_\z)\star q(\partial_\s))(\log \x), 
\end{equation}
hence this solution has a \textit{non-zero} starting term.
Hence $\v$ is an exponent.
\end{proof}

\begin{example}[Continuation of Example \ref{ex:non_C-Mcase}]
  \label{ex:non_C-Mcase_(2)}
  Let $A$ and $\v$ be the ones in Example \ref{ex:non_C-Mcase}.
  Recall that $m(\s)\in P_\cN$, which is a necessary condition for the fake exponent $\v$ not to be an exponent.
  We see that $\v$ is not an exponent from the following calculation.
  Note that
  \begin{equation}
    \theta_1-2\theta_3-3\theta_4+1\in \langle A\theta_\x-\bbeta\rangle.
  \end{equation}
  Then we have 
  \begin{align}
  0&\equiv
  \theta_1\theta_3(\theta_1-2\theta_3-3\theta_4+1)\\
  &=
  \theta_1^2\theta_3-2\theta_1\theta_3^2-3\theta_1\theta_3\theta_4+\theta_1\theta_3\\
  &=
  \theta_1(\theta_1-1)\theta_3-2\theta_1\theta_3(\theta_3-1)-3\theta_1\theta_3\theta_4\\
  &=
  x_1^2x_3\partial_{x_1}^2\partial_{x_3}-2x_1x_3^2\partial_{x_1}\partial_{x_3}^2-3x_1x_3x_4
  \partial_{x_1}\partial_{x_3}\partial_{x_4}\\
  &\equiv
  x_1^2x_3\partial_{x_2}^3-2x_1x_3^2\partial_{x_2}^2\partial_{x_4}-3x_1x_3x_4
  \partial_{x_2}\partial_{x_3}^2
  \end{align}
  modulo $H_A(\bbeta)$.
  Hence
  \begin{equation}
  x_1^2x_3\partial_{x_2}^3-2x_1x_3^2\partial_{x_2}^2\partial_{x_4}-3x_1x_3x_4
  \partial_{x_2}\partial_{x_3}^2\in H_A(\bbeta),
  \end{equation}
  and
  \begin{equation}
    x_1x_3^2\partial_{x_2}^2\partial_{x_4}\in \ini_{(-\w,\w)}(H_A(\beta)).
  \end{equation}
  Since
  \begin{equation}
    x_1x_3^2\partial_{x_2}^2\partial_{x_4}\bullet x^\v=
    x_1x_3^2\partial_{x_2}^2\partial_{x_4}\bullet x_2^{-2}x_3^{-1}x_4 \neq 0,
  \end{equation}
  $\v$ is not an exponent.
\end{example}

\begin{corollary}
  Assume that $B$ is a basis of $L$. 
  If $|I\cup J|>|I_\0|$ for any $I\in \cN$ and $J\in \cN^c$, then $\v$ is an exponent.  
\end{corollary}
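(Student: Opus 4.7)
The plan is to apply Proposition \ref{prop:exponent}: it suffices to show that $m(\s)\notin P_\cN$. I will establish this by a degree comparison, using that $P_\cN$ is a homogeneous ideal generated by the polynomials $(B\s)^{I\cup J\setminus K_\cN}$ with $I\in\cN$, $J\in\cN^c$. If every such generator has degree strictly larger than $\deg(m(\s))$, then no nonzero homogeneous element of degree $\deg(m(\s))$ can lie in $P_\cN$, forcing $m(\s)\notin P_\cN$.

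Since $I_\0=\nsupp(\v)\in\cN$, we have $K_\cN=\bigcap_{I\in\cN}I\subset I_\0$, so $m(\s)=(B\s)^{I_\0\setminus K_\cN}$ is homogeneous of degree $|I_\0|-|K_\cN|$. Similarly, for $I\in\cN$ and $J\in\cN^c$, the inclusion $K_\cN\subset I\subset I\cup J$ gives that $(B\s)^{I\cup J\setminus K_\cN}$ is homogeneous of degree $|I\cup J|-|K_\cN|$. The hypothesis $|I\cup J|>|I_\0|$ thus yields
\begin{equation}
\deg\bigl((B\s)^{I\cup J\setminus K_\cN}\bigr)=|I\cup J|-|K_\cN|>|I_\0|-|K_\cN|=\deg(m(\s)).
\end{equation}

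It remains to verify that $m(\s)$ is not the zero polynomial, i.e.\ that every factor $(B\s)_j$ for $j\in I_\0\setminus K_\cN$ is nonzero, equivalently that $I_\0\setminus K_\cN\subset \supp(B)$. If $j\in I_\0\setminus\supp(B)$, then for every $I\in\cN$ the defining equality $\supp(B)\cup I=\supp(B)\cup I_\0$ forces $j\in I$, whence $j\in K_\cN$. This proves $I_\0\setminus\supp(B)\subset K_\cN$, i.e.\ $I_\0\setminus K_\cN\subset\supp(B)$, as required.

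Combining these observations, $m(\s)$ is a nonzero homogeneous polynomial whose degree is strictly less than the degree of every generator of the homogeneous ideal $P_\cN$; hence $m(\s)\notin P_\cN$, and Proposition \ref{prop:exponent} concludes that $\v$ is an exponent. No serious obstacle arises here; the only delicate point is the nonvanishing of $m(\s)$, which is handled by the set-theoretic argument above.
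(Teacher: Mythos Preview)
Your proof is correct and follows the same approach as the paper: both reduce to Proposition \ref{prop:exponent} via the degree comparison $|I\cup J\setminus K_\cN|>|I_\0\setminus K_\cN|$, using $K_\cN\subset I\cap I_\0$. You additionally verify that $m(\s)\neq 0$ by showing $I_\0\setminus K_\cN\subset\supp(B)$; the paper's proof leaves this implicit, so your version is in fact slightly more complete.
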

\begin{proof}
For any $I\in\cN$ and $J\in \cN^c$, we see that 
\begin{equation}
  |I\cup J\setminus K_\cN|=|I\cup J|-|K_\cN|>|I_\0|-|K_\cN|=|I_\0\setminus K_\cN|
\end{equation}
because both of $I$ and $I_\0$ contain $K_\cN$.
Since the degree of $m(\s)=(B\s)^{I_\0\setminus K_\cN}$ is less than that of any $(B\s)^{I\cup J\setminus \cN}$, $m(\s)$ cannot belong to $P_\cN$. 
By Proposition \ref{prop:exponent}, $\v$ is an exponent.
\end{proof}

\begin{theorem}
  \label{thm:PBKandQperp}
  Assume that $B$ is a basis of $L$, and that $P_\cN=m(\s)\cdot P_B$.
  Then $\v$ is an exponent, and the set 
  \begin{equation}
    \{(q(\partial_{\s})\bullet \widetilde{F_{\cN}}(x,\s))_{|\s=\0}\,|\,q(\partial_\s)\in P_\cN^\perp\}.
  \end{equation}
  spans the space of series solutions in the direction of $\w$ to $M_{A}(\bbeta)$ with exponent $\v$.
  In particular, for $q(\partial_\s)\in P_\cN^\perp$, the solution $(q(\partial_{\s})\bullet \widetilde{F}_{\cN}(x,\s))_{|\s=\0}$ has the starting term $x^\v\Psi_B(m(\partial_\z)\star q(\partial_\s))(\log \x)$.
  \end{theorem}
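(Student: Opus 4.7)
The plan is to synthesize the machinery built up in Sections~2 and 3, combining Theorem~\ref{thm:refinement} with the algebraic identifications from Section~3 and a starting-term analysis. First, I would confirm that $\v$ is an exponent. Under the hypothesis $P_\cN = m(\s)\cdot P_B$, Proposition~\ref{prop:Key2}(iii) yields $m(\partial_\z)\star P_\cN^\perp = P_B^\perp$; since the constant operator $1$ lies in $P_B^\perp$, this space is nonzero, and Proposition~\ref{prop:Key2}(ii) then forces $m(\s)\notin P_\cN$. Proposition~\ref{prop:exponent} therefore gives that $\v$ is an exponent.

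Next, I would pin down the starting term of each $(q(\partial_\s)\bullet \widetilde{F}_\cN(\x,\s))_{|\s=\0}$. By Theorem~\ref{thm:refinement}, these expressions are solutions to $M_A(\bbeta)$. Since $a_\0(\s)=1$, the $\u=\0$ summand of $\widetilde{F}_\cN$ is $m(\s)x^{\v+B\s}$, and by Proposition~\ref{prop:startingterm} its contribution is precisely $x^\v\Psi_B(m(\partial_\z)\star q(\partial_\s))(\log\x)$. The remaining summands, indexed by $\u\in L'\setminus\{\0\}\subset C(\w)\setminus\{\0\}$, produce monomials in $\x$ whose $\w$-weight strictly exceeds $\w\cdot\v$, because each generator $\g^{(i)}$ of $C(\w)$ satisfies $\w\cdot\g^{(i)}>0$ by the choice of leading terms in the reduced Gr\"obner basis. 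Thus these higher-order contributions cannot cancel the $\u=\0$ term and the claimed starting-term formula holds.

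Finally, to establish the spanning statement, I would assemble the chain
\begin{equation}
P_\cN^\perp \xrightarrow{\ m(\partial_\z)\star\,-\ } P_B^\perp \xrightarrow{\ \Psi_B\ } Q_\v^\perp.
\end{equation}
The first arrow is surjective by Proposition~\ref{prop:Key2}(iii) under the hypothesis $P_\cN = m(\s)\cdot P_B$; the second is an isomorphism by Theorem~\ref{thm:Pperp Qperp} since $B$ is a basis of $L$. By the regular-holonomicity discussion at the start of this section together with Proposition~\ref{prop3}, the space of canonical series solutions to $M_A(\bbeta)$ with exponent $\v$ is in bijection with $Q_\v^\perp$ via the starting-term map $x^\v f(\log\x)\mapsto f$. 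Surjectivity of the composition then shows that the starting terms of our family exhaust $x^\v Q_\v^\perp(\log\x)$, so the family spans the solution space with exponent $\v$.

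The main obstacle I expect is the second step, namely cleanly isolating the $\u=\0$ contribution as the genuine starting term. This is not algebraically deep but requires carefully combining the positivity $\w\cdot\g^{(i)}>0$ for each generator of $C(\w)$, the inclusion $L'\subset C(\w)$, and Proposition~\ref{prop:startingterm} applied inside the formal-series structure of $\widetilde{F}_\cN(\x,\s)$; all the heavier algebraic machinery (the identification $m(\partial_\z)\star P_\cN^\perp = P_B^\perp$ and $\Psi_B(P_B^\perp)=Q_\v^\perp$) is already packaged in the results of Section~3.
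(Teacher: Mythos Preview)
Your proposal is correct and follows essentially the same route as the paper: show $m(\s)\notin P_\cN$ to invoke Proposition~\ref{prop:exponent}, use Proposition~\ref{prop:Key2}(iii) together with Theorem~\ref{thm:Pperp Qperp} to obtain surjectivity of the starting-term map $P_\cN^\perp\to Q_\v^\perp$, and then conclude spanning by comparing with the dimension of the solution space. One small caution: the canonical-series bijection is literally with $\mathrm{Sol}(\ind_\w(H_A(\bbeta))_\v)$, which a priori is only a subspace of $Q_\v^\perp=\mathrm{Sol}(\find_\w(H_A(\bbeta))_\v)$; but since your surjection hits all of $Q_\v^\perp$ it a fortiori covers that subspace (and, because the constructed series are genuine solutions, their starting terms actually lie in $\mathrm{Sol}(\ind_\w)$, forcing equality here), so the argument goes through.
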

  \begin{proof}
    By definition, $P_B\ne\C[\s]$. 
    It follows from the assumption that $m(\s)\notin P_\cN$.
    Hence, by Proposition \ref{prop:exponent}, $\v$ is an exponent.
    Moreover, by Proposition \ref{prop:Key2}, the homomorphism \eqref{eqn:homomorphism2}  is surjective.
    Hence we have
    \begin{align}
      \dim_\C (P_\cN^\perp)&\geq \dim_\C(P_B^\perp)=\dim_\C(Q_\v^\perp)=\dim_\C(\mathrm{Sol}(\find_\w(H_A(\bbeta))_\v))\\
      &\geq \dim_\C(\mathrm{Sol}(\ind_\w(H_A(\bbeta))_\v)).
    \end{align}

    By the regularity of $M_A(\bbeta)$, the dimension of the space of series solutions with the exponent $\v$ coincides with $\dim_\C(\mathrm{Sol}(\ind_\w(H_A(\bbeta))_\v))$. 
    Hence, by Theorem \ref{thm:refinement}, we have the former half of the assertion.

    The latter half of the assertion follows from Proposition \ref{prop:startingterm}.
    \end{proof}

\begin{example}[Continuation of Examples \ref{ex:SST-ex3.5.2_(1)}, \ref{ex:SST-ex3.5.2_(2)}, and \ref{ex:SST-ex3.5.2_(3)}]
\label{ex:SST-ex3.5.2_(4)}
  Consider the case where $B=\{\g^{(1)}, \g^{(2)}\}$.
  Then, $B$ satisfies the assumption in Theorem \ref{thm:PBKandQperp}.
  Recall that $m(\s)=(B\s)^{\emptyset}=1$, and
  \begin{equation}
    P_B=\langle (B\s)^{G^{(1)}}, (B\s)^{G^{(2)}}\rangle =\langle s_1^2, s_2^2\rangle=P_\cN.
  \end{equation}
  Hence, we see by Proposition \ref{prop:exponent} that $\v$ is an exponent, and that  
  \begin{equation}
    \{1, \partial_{s_1}, \partial_{s_2}, \partial_{s_1}\partial_{s_2}\}
  \end{equation}
  is a basis of $P_B^\perp$.
  Hence $x^\v f(\log \x)$ is a solution to 
  $\find_\w(H_A(\bbeta))_\v$ 
  if and only if
  \begin{equation}
  f\in \langle 1, \x\g^{(1)}, \x\g^{(2)}, (\x\g^{(1)})\cdot (\x\g^{(2)})\rangle_\C.
  \end{equation}
  By the uniqueness of an exponent, the above space coincides with the space of solutions to $M_A(\bbeta)$. 
  Note that the holonomic rank of $M_A(\bbeta)$ is four (cf.\,\cite[Example 3.5.2]{SST}). 
\end{example}

\begin{example}\cite[Examples 3.6.3, 3.6.11, 3.6.16]{SST}
\label{ex:SST-ex3.6.3}
Let $d=3$, $n=9$ and
\begin{equation}
A=
\begin{bmatrix}
1 & 1 & 1 & 1 & 1 & 1 & 1 & 1 & 1\\
0 & 1 & 2 & 0 & 1 & 2 & 0 & 1 & 2\\
0 & 0 & 0 & 1 & 1 & 1 & 2 & 2 & 2
\end{bmatrix}.
\end{equation}
Let $\w=(2,0,0,0,-1,0, 0,0, 2)$,
$\bbeta=(1,1,1)^T=\a_5$.
Consider an exponent $\v=(0,0,0,0,1,0,0,0,0)^T$.
Then $K_{\cN_B}=I_\0=\emptyset$.
The reduced Gr\"{o}bner basis consists of the following twenty vectors 
\begin{align}
  &\{\g^{(1)}:=(0,1,-1,0,-1,1,0,0,0)^T,\g^{(2)}:=(0,0,0,1,-1,0,-1,1,0)^T,\\
  &\quad \g^{(3)}:=(0,0,0,1,-2,1,0,0,0)^T,\g^{(4)}:=(0,1,0,0,-2,0,0,1,0)^T,\\
  &\quad \g^{(5)}:=(1,-1,0,-1,1,0,0,0,0),\g^{(6)}:=(0,0,0,0,1,-1,0,-1,1),\\
  &\quad \g^{(7)},\ldots,\g^{(20)}\}.
\end{align}
Hence we have 
\begin{align}
  \{&G^{(i)}\,|\,i=1,\ldots,20\}\\
  &=\{G^{(1)}=\{2,6\},G^{(2)}=\{4,8\},G^{(3)}=\{4,6\},G^{(4)}=\{2,8\},\\
  &\qquad G^{(5)}=\{1\},G^{(6)}=\{9\},\{1,8\},\{2,7\},\{1,3\},\{1,6\},\{3,4\},\\
  &\qquad\{1,9\},\{3,7\},\{4,9\},\{6,7\},\{7,9\},\{2,9\},\{3,8\},\{3,9\},\{1,7\}\}.
\end{align}

Let 
\begin{equation}
  B=[\g^{(1)},\g^{(2)},\g^{(3)},\g^{(4)},\g^{(5)},\g^{(6)}]=\begin{bmatrix} 0 & 0 & 0 & 0 & 1 & 0\\
    1 & 0 & 0 & 1 & -1 & 0\\
    -1 & 0 & 0 & 0 & 0 & 0\\
    0 & 1 & 1 & 0 & -1 & 0 \\
    -1 & -1 & -2 & -2 & 1 & 1\\
    1 & 0 & 1 & 0 & 0 & -1\\
    0 & -1 & 0 & 0 & 0 & 0\\
    0 & 1 & 0 & 1 & 0 & -1\\
    0 & 0 & 0 & 0 & 0 & 1
  \end{bmatrix}.
\end{equation}
Then 
\begin{align}
  P_{\cN}&=P_B\\
  &=\langle (s_1+s_4-s_5)(s_1+s_3-s_6),(s_2+s_3-s_5)(s_2+s_4-s_6),\\
  &\qquad (s_2+s_3-s_5)(s_1+s_3-s_6),(s_1+s_4-s_5)(s_2+s_4-s_6),\\
  &\qquad s_5, s_6,s_2(s_1+s_4-s_5),s_1(s_2+s_3-s_5),\\
  &\qquad s_1 s_2,s_2(s_1+s_3-s_6),s_1(s_2+s_4-s_6)\rangle\\
  &=\langle s_1 s_2, s_1 s_3, s_1 s_4, s_2 s_3, s_2 s_4, s_3^2, s_4^2, s_1^2+s_3 s_4, s_2^2+s_3 s_4, s_5, s_6\rangle,
\end{align}
where the last generator set gives the reduced Gr\"{o}bner basis with respect to the lexicographic order $<$ with $s_1>s_2>s_3>s_4>s_5>s_6$.
We see that
\begin{equation}
  \{\nnu\in\N^{6}\,|\,\s^\nnu\notin\ini_<(P_B)\}=\{\0,\e_1,\e_2,\e_3,\e_4,\e_3+\e_4\},
\end{equation}
and hence we immediately have
\begin{equation}
  q_\0=1, q_{\e_1}=\partial_{s_1}, q_{\e_2}=\partial_{s_2}, q_{\e_3}=\partial_{s_3}, q_{\e_4}=\partial_{s_4}.
\end{equation}
As to the last generator $q_{\e_3+e_4}$, since 
\begin{equation}
  c_{\mmu,\e_3+\e_4}=\begin{cases} -1 & (\text{if}\ \mmu= 2\e_1, 2\e_2)\\ 0 & (\text{otherwise})
\end{cases},
\end{equation}
we have
\begin{equation}
  q_{\e_3+\e_4}=\partial_{s_3}\partial_{s_4}+\sum_{\mmu;|\mmu|=2}c_{\mmu,\e_3+\e_4}\frac{1}{\nnu!}\partial_\s^\mmu=\partial_{s_3}\partial_{s_4}-\frac{1}{2}\partial_{s_1}^2-\dfrac{1}{2}\partial_{s_2}^2.
\end{equation}
Hence a $\C$-basis of $Q_\v^\perp$ is given by 
\begin{equation}
\{1, \x\g^{(1)}, \x\g^{(2)} ,\x\g^{(3)}, \x\g^{(4)}, 
(\x\g^{(3)})\cdot(\x\g^{(4)})-\frac{1}{2} (\x\g^{(1)})^2-\frac{1}{2} (\x\g^{(2)})^2\}.
\end{equation}
\end{example}


\section{Aomoto-Gel'fand systems}
In this section, let
\begin{equation}
A=\{ \a_{i,j}\,|\, 1\leq i\leq m,\, m+1\leq j\leq m+l\},
\end{equation}
where
$\a_{i,j}=\e_i+\e_j$, and $\{ \e_1,\ldots, \e_{l+m}\}$
is the standard basis of $\Z^{l+m}$.

Then $\Z A=\{ \a\in \Z^{l+m}\,|\, \sum_{i=1}^ma_i=\sum_{j=m+1}^{m+l}a_j\}$,
$\rank(A)=m+l-1$, and
$\rank(L)=ml-(m+l-1)=(m-1)(l-1)$,
where
\begin{equation}
L=\{
[c_{ij}]_{1\leq i\leq m,\, m+1\leq j\leq m+l}\in M_{m\times l}(\Z)
\,|\,
\sum_{i,j}c_{ij}\a_{ij}=\0
\}.
\end{equation}
Since $A$ is normal, (that is, $\N A=\Z A\cap \R_{\geq 0}A$), $I_A$ is a Cohen-Macaulay ideal and hence
$\rank(M_A(\bbeta))=\vol(A)$ for any $\bbeta$ (see \cite{GZK2}).

Take a weight vector $\w$ satisfying
$w_{i,j}>w_{p,q}$ whenever $(i, j)\ne (p,q)$, $i\leq p$, and $j\leq q$.

Then the reduced Gr\"{o}bner basis of $I_A$ with respect to $\w$ equals
\begin{equation}
G:=
\{
\underline{\partial^{(\g^{(i,j)}_{(p,q)})_+}}
-
\partial^{(\g^{(i,j)}_{(p,q)})_-}\,|\,
i<p,\, j<q
\},
\end{equation}
and
\begin{equation}
\ini_\w(I_A)=
\langle
\partial_{i,j}\partial_{p,q}\,|\, i<p,\, j<q
\rangle,
\end{equation}
where 
$\g^{(i,j)}_{(p,q)}:=E_{i,j}+E_{p,q}-E_{i,q}-E_{p,j}\in L$
and $E_{i,j}$ are matrix units.

The weight $\w$ induces a staircase regular triangulation,
which is unimodular
(cf. \cite[Example 8.12]{Sturmfels-GB});
for example, let $m=2, l=4$, the standard pairs are
\begin{equation}
\begin{bmatrix}
* & * & * & *\\
* & 0 & 0 & 0
\end{bmatrix},
\quad
\begin{bmatrix}
0 & * & * & *\\
* & * & 0 & 0
\end{bmatrix},
\quad
\begin{bmatrix}
0 & 0 & * & *\\
* & * & * & 0
\end{bmatrix},
\quad
\begin{bmatrix}
0 & 0 & 0 & *\\
* & * & * & *
\end{bmatrix},
\end{equation}
where let
the row-numbers be $1,\ldots, m$ and
the column-numbers $m+1,\ldots, m+l$.

For general $l,m$,
the standard pairs correspond to the paths
from the southwest corner to the northeast corner
going only northward or eastward.
In this way, we see
\begin{equation}
\vol(A)=
\binom{m+l-2}{m-1}.
\end{equation}

Let
\begin{equation}
B:=
\{ \b^{(i,j)}:=\g^{(i,j)}_{(i+1, j+1)}\,|\,
1\leq i<m,\, m+1\leq j< m+l\}.
\end{equation}
Then $B$ is a basis of $L$ and $\supp(B)=\{1,\ldots,m+l\}$, hence $B$ satisfies Assumption \ref{ass3}.
Let $\s=(s_{(i,j)})_{1\leq i<m,m+1\leq j<m+l}$ be indeterminates such that $s_{(i,j)}$ corresponds to $\b^{(i,j)}$.
For convenience, set $s_{(i,j)}:=0$ unless $(i,j)\in \{1,\ldots,m-1\}\times\{m+1,\ldots,m+l-1\}$. 
Then 
\begin{align}
  (B\s)_{(\mu,\nu)}&=\sum_{\substack{1\leq i<m\\m+1\leq j<m+l}}s_{(i,j)}(\g_{(i+1, j+1)}^{(i,j)})_{(\mu,\nu)}\\
  &=s_{(\mu,\nu)}-s_{(\mu,\nu-1)}-s_{(\mu-1,\nu)}+s_{(\mu-1,\nu-1)}.
\end{align}

\begin{lemma}\label{lem:Aomoto:exponent}
Let $\bbeta=\0$. 
Then $\v=\0$ is a unique exponent.
\end{lemma}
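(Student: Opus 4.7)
My plan is to identify the full set of fake exponents of $H_A(\0)$ with respect to $\w$ and show that it consists of $\v=\0$ alone. Since the initial ideal $\ini_\w(I_A)=\langle \partial_{i,j}\partial_{p,q}\mid i<p,\,j<q\rangle$ displayed in the text is squarefree, its distraction is simply $\widetilde{\ini}_\w(I_A)=\langle \theta_{i,j}\theta_{p,q}\mid i<p,\,j<q\rangle$ by \cite[Theorem 3.2.2]{SST}. Together with the Euler operators $A\theta$, this shows that $\find_\w(H_A(\0))$ cuts out those $\v=(v_{i,j})\in\C^{ml}$ satisfying $A\v=\0$ (equivalently, every row sum $\sum_{j=m+1}^{m+l}v_{i,j}$ and every column sum $\sum_{i=1}^m v_{i,j}$ vanishes) together with the ``staircase'' support condition $v_{i,j}v_{p,q}=0$ for all $i<p$ and $j<q$. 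The vector $\v=\0$ clearly meets both, hence is a fake exponent.

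The main step is uniqueness, which I prove by contradiction. Suppose $\v$ is a fake exponent whose support $S=\{(i,j)\mid v_{i,j}\ne 0\}$ is non-empty, and set $S_i:=\{j\mid (i,j)\in S\}$. The staircase condition translates into the combinatorial statement that $\min S_i\geq \max S_p$ whenever $i<p$ and both $S_i,S_p$ are non-empty. Let $i_0$ be the smallest row index with $S_{i_0}\ne\emptyset$ and put $j_0:=\max S_{i_0}$. The vanishing row sum $\sum_{j\in S_{i_0}}v_{i_0,j}=0$ rules out $|S_{i_0}|=1$, so $\min S_{i_0}<j_0$. Then for every $p>i_0$ with $S_p\ne\emptyset$, $\max S_p\leq \min S_{i_0}<j_0$, so column $j_0$ carries no non-zero entry outside row $i_0$. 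The column-sum condition at $j_0$ now forces $v_{i_0,j_0}=0$, contradicting $(i_0,j_0)\in S$. Hence $S=\emptyset$, and $\v=\0$ is the unique fake exponent.

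Finally, the regularity of $M_A(\0)$, which is guaranteed by the homogeneity of $A$, ensures that at least one genuine exponent exists. Since every exponent is in particular a fake exponent, this forces $\v=\0$ to be the unique exponent. I do not anticipate a serious obstacle: once the staircase condition has been read off from the squarefree initial ideal, uniqueness collapses to the observation that the top non-empty row's rightmost column contains a single non-zero entry, which is then immediately forbidden by the vanishing column sum.
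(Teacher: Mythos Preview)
Your proof is correct and takes a genuinely different route from the paper's. The paper first invokes \cite[Theorem 3.6.6]{SST} (squarefree initial ideal implies every fake exponent is an exponent), and then argues uniqueness via standard pairs: any fake exponent $\v$ is supported on some facet $\sigma$ of the staircase triangulation, and since $A_\sigma$ is invertible, $A_\sigma\v_\sigma=\0$ forces $\v=\0$. You instead read off the distraction directly from the squarefree generators, characterize fake exponents by the vanishing row/column sums together with the support condition $v_{i,j}v_{p,q}=0$ for $i<p$, $j<q$, and eliminate nonzero solutions by the elementary ``topmost row, rightmost column'' argument; you then recover the exponent statement from regularity (positive rank forces at least one exponent, and every exponent is a fake exponent). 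Your argument is more self-contained, avoiding both the standard-pair formalism and the cited black box from \cite{SST}; the paper's argument, on the other hand, makes transparent why the result holds for \emph{any} squarefree $\ini_\w(I_A)$ with $\bbeta=\0$, since it only uses invertibility of $A_\sigma$ rather than the specific combinatorics of the staircase pattern.
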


\begin{proof}
Since $\ini_\w(I_A)$ is square-free, every fake exponent is an exponent by Theorem 3.6.6 in \cite{SST}.

Let $\v$ be an exponent.
Then there exists a standard pair $(\a, \sigma)=(\0,\sigma)$ corresponding to $\v$ such that 
\begin{equation}
  v_j=0\quad (j\notin \sigma),\qquad A\v=\bbeta=\0.
\end{equation}
Since the submatrix $A_\sigma=(\a_j)_{j\in\sigma}$ is invertible and satisfies $A_\sigma \v_\sigma=\0$, we have $\v=\0$.
\end{proof}

From now on, let $\bbeta=\0$ and $\v=\0$.
Hence $I_\0=\emptyset=K_{\cN_B}$.

\begin{lemma}
\label{lem:Aomoto:J}
\begin{enumerate}
\item[(i)]
$\{ (ij), (pq)\}\in \cN^c=\NS_\w(\v)^c$ for $i<p,\, j<q$.
\item[(ii))]
Let $J\in \cN^c=\NS_\w(\v)^c$.
Then there exist $i<p,\, j<q$
such that $J\supset \{ (ij), (pq)\}$.
\end{enumerate}
\end{lemma}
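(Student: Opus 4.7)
The plan is to deduce both parts from Proposition~\ref{prop:Assump(2)} together with the explicit shape of the reduced Gr\"obner basis $\cG=\{\g^{(i,j)}_{(p,q)}\mid i<p,\,j<q\}$, using three simplifications available here: $\v=\0$ forces $I_\u=\nsupp(\u)$ for every $\u\in L$; $I_\0=\emptyset$; and $\supp(B)$ is the whole index set, so that $\cN=\NS_\w(\v)$ and $\cN^c=\NS_\w(\v)^c$. Thus it suffices to work directly with $\NS_\w(\v)$ and $C(\w)$.

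For (i): given $i<p$ and $j<q$, I would exhibit the explicit witness
\begin{equation}
  \u:=-\g^{(i,j)}_{(p,q)}=E_{i,q}+E_{p,j}-E_{i,j}-E_{p,q}\in L,
\end{equation}
for which inspection gives $I_\u=\nsupp(\u)=\{(i,j),(p,q)\}$. The only nonformal point is showing $\u\notin C(\w)$, and here I would use the Gr\"obner property that $\partial^{\g_+}\in\ini_\w(I_A)$ for every generator $\g=\g^{(i',j')}_{(p',q')}$, equivalently $\w\cdot\g>0$; this positivity is preserved by nonzero $\N$-linear combinations, so every nonzero element of $C(\w)$ has strictly positive $\w$-pairing. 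But $\w\cdot\u=-\w\cdot\g^{(i,j)}_{(p,q)}<0$, so $\u\notin C(\w)$. Therefore $I_\u\in\NS_\w(\v)^c=\cN^c$, giving (i).

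For (ii): I would simply apply Proposition~\ref{prop:Assump(2)}, which says that any $J\in\NS_\w(\v)^c$ contains $G^{(k)}$ (modulo $I_\0=\emptyset$) for some $\g^{(k)}\in\cG$. In the Aomoto--Gel'fand case this $\g^{(k)}$ has the form $\g^{(i,j)}_{(p,q)}$ for some $i<p$, $j<q$, and with $\v=\0$ one computes
\begin{equation}
  G^{(i,j)}_{(p,q)}=I_{-\g^{(i,j)}_{(p,q)}}\setminus I_\0=\nsupp\!\bigl(-\g^{(i,j)}_{(p,q)}\bigr)=\{(i,j),(p,q)\},
\end{equation}
so $J\supset\{(i,j),(p,q)\}$, as required.

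The proof is essentially bookkeeping once the dictionary between $\cG$ and the pairs $(i<p,\,j<q)$ is fixed; the only step with genuine content is the $\w$-positivity argument that forces $-\g^{(i,j)}_{(p,q)}\notin C(\w)$ in (i).
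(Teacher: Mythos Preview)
Your proof is correct and follows essentially the same route as the paper: both parts rest on the identification $G^{(i,j)}_{(p,q)}=I_{-\g^{(i,j)}_{(p,q)}}=\{(i,j),(p,q)\}$ together with Proposition~\ref{prop:Assump(2)}, and the paper tacitly uses (as in the proof of Proposition~\ref{prop:Key1}\,(i)) the fact $-\g^{(i,j)}_{(p,q)}\notin C(\w)$ that you justify explicitly via $\w$-positivity. Your write-up simply unpacks what the paper leaves implicit.
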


\begin{proof}
(i)\
This follows from
$G^{(ij)}_{(pq)}=\nsupp(\v-\g^{(ij)}_{(pq)})=\nsupp(-\g^{(ij)}_{(pq)})=
\{ (ij), (pq)\}.
$

(ii)\ 
This is immediate from Proposition \ref{prop:Assump(2)}.
\end{proof}

\begin{proposition}
\label{prop:Aomoto:P_B}
\begin{equation}
P_B=\langle (B\s)^{\{(i,j),(p,q)\}}\,|\,i<p, j<q\rangle
=
\langle
s_{(i,j)}s_{(p,q)}\,|\,
i\leq p,\, j\leq q
\rangle.
\end{equation}
\end{proposition}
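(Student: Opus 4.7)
The first equality is immediate from the definition \eqref{eqn:P_B_def} of $P_B$: since $\v=\0$ forces $I_\0=\emptyset$, the set $G^{(k)}$ equals the positive support of $\g^{(k)}$, and for $\g^{(i,j)}_{(p,q)}=E_{i,j}+E_{p,q}-E_{i,q}-E_{p,j}$ this support is exactly $\{(i,j),(p,q)\}$. I would dispatch that equality in one line.

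For the second equality I would abbreviate $t_{(\mu,\nu)}:=(B\s)_{(\mu,\nu)}$. Two structural observations drive the argument. First, the explicit formula $t_{(\mu,\nu)}=s_{(\mu,\nu)}-s_{(\mu,\nu-1)}-s_{(\mu-1,\nu)}+s_{(\mu-1,\nu-1)}$ together with the stated boundary convention $s_{(i,j)}=0$ outside $\{1,\ldots,m-1\}\times\{m+1,\ldots,m+l-1\}$ expresses $t$ as a discrete mixed difference of $s$. Second, since every column $\b^{(i,j)}$ of $B$ lies in $L=\Ker_\Z(A)$, the row and column sums of the matrix $(t_{(\mu,\nu)})$ vanish identically in $\C[\s]$. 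Combining these two facts gives four ``quadrant'' expressions for $s_{(i,j)}$; I will use
\begin{equation}
  s_{(i,j)} \;=\; \sum_{1\leq\mu\leq i,\, m+1\leq\nu\leq j} t_{(\mu,\nu)} \;=\; \sum_{i<\mu\leq m,\, j<\nu\leq m+l} t_{(\mu,\nu)}.
\end{equation}

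For the forward inclusion I would expand each generator $t_{(i,j)}t_{(p,q)}$ (with $i<p$, $j<q$) via the difference formula; every summand is $\pm s_{(a,b)}s_{(a',b')}$ with $a\in\{i-1,i\}$, $a'\in\{p-1,p\}$, $b\in\{j-1,j\}$, $b'\in\{q-1,q\}$, hence $a\leq i\leq p-1\leq a'$ and $b\leq b'$, so each non-vanishing summand is a generator of the right-hand side. For the reverse inclusion I would apply the south-west quadrant formula to $s_{(i,j)}$ and the north-east quadrant formula to $s_{(p,q)}$ to obtain
\begin{equation}
  s_{(i,j)}s_{(p,q)} \;=\; \sum_{\substack{\mu\leq i,\, \nu\leq j\\ \mu'>p,\, \nu'>q}} t_{(\mu,\nu)}t_{(\mu',\nu')},
\end{equation}
in which every summand satisfies $\mu\leq i\leq p<\mu'$ and $\nu\leq j\leq q<\nu'$, hence is a generator of the left-hand side ideal.

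The main obstacle is the idea behind the reverse inclusion: a naive expansion using the same south-west formula for both $s_{(i,j)}$ and $s_{(p,q)}$ produces many summands with $\mu\geq\mu'$ or $\nu\geq\nu'$, which are not among the given generators of $P_B$. The trick of switching to the diagonally opposite quadrant for the second factor, made legitimate by the row and column sum relations reflecting $B\s\in L\otimes\C[\s]$, is what forces every summand to be a generator and closes the argument cleanly.
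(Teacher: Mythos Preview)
Your proof is correct and takes a genuinely different route from the paper for the reverse inclusion. The paper argues by induction along a carefully chosen total order on the index set $\cX=\{((i,j),(p,q))\,|\,i\leq p,\,j\leq q\}$: at each step it expands $(B\s)^{\{(i,j),(p+1,q+1)\}}$ and observes that, modulo previously treated terms, the leading monomial is $s_{(i,j)}s_{(p,q)}$. Your argument instead exploits the structural fact that $(t_{(\mu,\nu)})$ has vanishing row and column sums (equivalently, that the telescoping sum of $t$ over any full row or column collapses to boundary terms, which vanish), giving the two quadrant identities for $s_{(i,j)}$; pairing the south-west identity for $s_{(i,j)}$ with the north-east identity for $s_{(p,q)}$ then writes $s_{(i,j)}s_{(p,q)}$ directly as a sum of generators of $P_B$ with no induction needed. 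Your approach is shorter and more conceptual, isolating exactly why the reverse inclusion holds (the $L$-membership of $B\s$), whereas the paper's induction is more elementary but requires a bespoke ordering whose role is less transparent.
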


\begin{proof}
  The first equality follows from Lemma \ref{lem:Aomoto:J}.
We show the second equality.
Since, for any $i$, $j$, $p$, $q$ with $i<p$ and $j<q$, 
\begin{align}
  (B\s)^{\{(i,j),(p,q)\}}&=(s_{(i,j)}-s_{(i,j-1)}-s_{(i-1,j)}+s_{(i-1,j-1)})\\
  &\quad\times(s_{(p,q)}-s_{(p,q-1)}-s_{(p-1,q)}+s_{(p-1,q-1)})\in \mathrm{RHS},
\end{align}
we only need to show the reverse inclusion. 
We introduce the total order $<$ on $\cX :=\{((i,j), (p,q))\,|\,1\leq i\leq p< m, m+1\leq j\leq q< m+l\}$ defined by
\begin{align}
  ((i, j), (p,q))&<((i', j'), (p',q'))\\
  &\iff \begin{cases}
    i<i',\\
    \text{or}\ [i=i'\ \text{and}\ j<j'],\\
    \text{or}\ [(i,j)=(i',j')\ \text{and}\ p>p'],\\
    \text{or}\ [(i,j,p)=(i',j',p')\ \text{and}\ q>q'].
  \end{cases}
\end{align}
We prove by induction on the totally ordered set $(\cX,<)$.

First, for the minimum element $((1,m+1), (m-1, m+l-1))$, we have
\begin{equation}
  s_{(1,m+1)}s_{(m-1,m+l-1)}=(B\s)^{\{(1,m+1),(m,m+l)\}}\in \mathrm{LHS}.
\end{equation}

Next, fix $((i,j),(p,q))\ne ((1,m+1), (m-1, m+l-1))$ with $1\leq i\leq p<m$ and $m+1\leq j\leq q<m+l$.
Suppose that $s_{(i',j')}s_{(p',q')}\in\mathrm{LHS}$ for all $((i',j'),(p',q'))<((i,j),(p,q))$.
Then the leading monomial in 
\begin{align}
\left(\mathrm{LHS}\ni\right)\ (B\s)^{\{(i,j),(p+1,q+1)\}}
&=(s_{(i,j)}-s_{(i,j-1)}-s_{(i-1,j)}+s_{(i-1,j-1)})\\
&\quad \times(s_{(p+1,q+1)}-s_{(p+1,q)}-s_{(p,q+1)}+s_{(p,q)})
\end{align}
is $s_{(i,j)}s_{(p,q)}$.
It follows from the induction hypothesis that $s_{(i,j)}s_{(p,q)}\in \mathrm{LHS}$.
Hence we have the assertion.
\end{proof}

\begin{corollary}
  \label{cor:Aomoto}
  \begin{align}
    P_B^\perp&=\left\langle \partial_\s^\p\,|\, \s^{\p}\notin\langle
    s_{(i,j)}s_{(p,q)}\,|\,
    i\leq p,\, j\leq q
    \rangle\right\rangle_\C\\
    &=\langle \partial_{s_{(i_1,j_1)}}\cdots\partial_{s_{(i_r,j_r)}}\,|\,i_1<\cdots<i_r, j_1>\cdots>j_r\rangle_\C.
  \end{align}
Furthermore, 
\begin{equation}
\dim_\C(P_B^\perp)
=\sum_{r=0}^{\min\{ l-1, m-1\}}
\binom{l-1}{r}\binom{m-1}{r}
=\binom{l+m-2}{m-1}.
\end{equation}
\end{corollary}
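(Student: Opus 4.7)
The plan is to apply Lemma \ref{lem:Basis_of_Pperp} directly to the monomial ideal $P_B$ described in Proposition \ref{prop:Aomoto:P_B}, and then carry out a routine combinatorial count.

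First, I would observe that by Proposition \ref{prop:Aomoto:P_B}, $P_B$ is a monomial ideal, so for any term order $\prec$ on $\C[\s]$ we have $\ini_\prec(P_B)=P_B$ and the reduced Gr\"obner basis consists of the minimal monomial generators. Consequently, for any $\nnu$ with $\s^\nnu \notin P_B$, the normal form of $\s^\nnu$ modulo this basis is $\s^\nnu$ itself, so in the notation of Lemma \ref{lem:Basis_of_Pperp} all coefficients $c_{\mmu,\nnu}$ vanish and $q_\nnu(\partial_\s) = \frac{1}{\nnu!}\partial_\s^\nnu$. This immediately yields the first description
\begin{equation}
P_B^\perp = \langle \partial_\s^\p \mid \s^\p \notin P_B\rangle_\C.
\end{equation}

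Next, I would translate non-membership in $P_B$ into a combinatorial condition on the support of $\p$. The monomial $\s^\p$ lies in $P_B$ iff it is divisible by $s_{(i,j)}s_{(p,q)}$ for some $i\leq p$, $j\leq q$. Hence $\s^\p \notin P_B$ iff $\s^\p$ is squarefree (the case $(i,j)=(p,q)$ forbids squares) and its support is an antichain with respect to the componentwise order on $\{1,\ldots,m-1\}\times\{m+1,\ldots,m+l-1\}$. Any such antichain of size $r$ can be listed uniquely as $(i_1,j_1),\ldots,(i_r,j_r)$ with $i_1<\cdots<i_r$ and $j_1>\cdots>j_r$, giving the second description of $P_B^\perp$.

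Finally, I would count: for fixed $r$, such an antichain is determined by the choice of $r$ values of $i$ from $\{1,\ldots,m-1\}$ and $r$ values of $j$ from $\{m+1,\ldots,m+l-1\}$, with the pairing forced by the opposite monotonicity. This gives $\binom{m-1}{r}\binom{l-1}{r}$ antichains. Summing over $r$ and applying the Chu--Vandermonde identity
\begin{equation}
\sum_{r=0}^{\min\{m-1,l-1\}}\binom{m-1}{r}\binom{l-1}{r}=\sum_{r}\binom{m-1}{m-1-r}\binom{l-1}{r}=\binom{m+l-2}{m-1}
\end{equation}
completes the dimension count. No step appears genuinely difficult; the only point requiring a little care is the antichain characterization and the observation that the generators with $i=p$, $j=q$ among those listed in Proposition \ref{prop:Aomoto:P_B} encode the squarefree condition.
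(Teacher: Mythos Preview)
Your proof is correct and follows essentially the same approach as the paper. The paper's proof is terser---it simply says the first part follows from Proposition \ref{prop:Aomoto:P_B} (implicitly invoking Lemma \ref{lem:Basis_of_Pperp} for the monomial ideal, as you spell out) and obtains the binomial identity by comparing coefficients of $z^{m-1}$ in $(1+z)^{l+m-2}=(1+z)^{l-1}(1+z)^{m-1}$, which is the same Vandermonde identity you cite.
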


\begin{proof}
The first part follows from Proposition \ref{prop:Aomoto:P_B}.
For the second part, compare the coefficients of $z^{m-1}$
in the following:
\begin{align}
\sum_{k=0}^{l+m-2}
\binom{l+m-2}{k}z^k
&=
(1+z)^{l+m-2}
=(1+z)^{l-1}(1+z)^{m-1}\\
&=
(\sum_{p=0}^{l-1}
\binom{l-1}{p}z^p)
(\sum_{q=0}^{m-1}
\binom{m-1}{q}z^{m-1-q})\\
&=
\sum_{p,q}\binom{l-1}{p}\binom{m-1}{q}z^{m-1-q+p}.
\end{align}
\end{proof}

\begin{corollary}
\begin{equation}
\{
((\partial_{s_{(i_1,j_1)}}\cdots\partial_{s_{(i_r,j_r)}})\bullet F_{\cN}(\x,\s))_{|\s=\0}\,|\, i_1<\cdots<i_r, j_1>\cdots>j_r
\}
\end{equation}
forms a fundamental system of solutions
to $M_A(\0)$.
\end{corollary}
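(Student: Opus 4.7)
The plan is to verify that the hypotheses of Theorem \ref{thm:PBKandQperp} are satisfied in this setting, so that the spanning statement for series solutions with exponent $\v$ translates directly into a fundamental system for $M_A(\0)$ once the dimension count is checked. First, I would observe that since $I_\0=\emptyset=K_\cN$ (established just before Lemma \ref{lem:Aomoto:J}), we have $m(\s)=(B\s)^{\emptyset}=1$, hence $\widetilde{F}_\cN=F_\cN$. Then Proposition \ref{prop:Key1}(i) gives $P_\cN=P_B$ automatically, so the assumption $P_\cN=m(\s)\cdot P_B$ is trivially satisfied, and $B$ being a basis of $L$ is already built into the construction.

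With those hypotheses in place, Theorem \ref{thm:PBKandQperp} tells us that $\{(q(\partial_\s)\bullet F_\cN(\x,\s))_{|\s=\0}\mid q(\partial_\s)\in P_\cN^\perp\}$ spans the space of series solutions in the direction of $\w$ with exponent $\v=\0$. Combining this with the explicit basis of $P_\cN^\perp=P_B^\perp$ produced in Corollary \ref{cor:Aomoto}, namely the monomial differential operators $\partial_{s_{(i_1,j_1)}}\cdots\partial_{s_{(i_r,j_r)}}$ with $i_1<\cdots<i_r$ and $j_1>\cdots>j_r$, gives the proposed family as a spanning set.

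To upgrade ``spanning set with exponent $\v$'' to ``fundamental system of $M_A(\0)$'', I would invoke two dimension facts already on the table. On the one hand, Lemma \ref{lem:Aomoto:exponent} says that $\v=\0$ is the unique exponent, so every canonical series solution lies in our subspace; and the regularity statement recalled at the beginning of Section 4 gives that the solution space has dimension $\rank(M_A(\0))=\vol(A)=\binom{l+m-2}{m-1}$ (using that $A$ is normal and thus $I_A$ is Cohen-Macaulay). On the other hand, Corollary \ref{cor:Aomoto} counts $\dim_\C P_B^\perp=\binom{l+m-2}{m-1}$ as well, so as soon as the linear map $q(\partial_\s)\mapsto (q(\partial_\s)\bullet F_\cN)_{|\s=\0}$ is injective on $P_\cN^\perp$, the spanning set is forced to be a basis.

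The only delicate point is therefore this injectivity, and I would settle it via the ``starting-term'' description furnished by Theorem \ref{thm:PBKandQperp} and Proposition \ref{prop:startingterm}: the solution attached to $q(\partial_\s)$ begins with $\x^\v\Psi_B(m(\partial_\z)\star q(\partial_\s))(\log\x)=\Psi_B(q(\partial_\s))(\log\x)$ since $m(\s)=1$, and $\Psi_B$ is injective by the linear independence of $B$ (see the argument in Theorem \ref{thm:Pperp Qperp}). Consequently, distinct basis elements of $P_\cN^\perp$ produce solutions whose starting monomials are $\C$-linearly independent in $\C[\log\x]$, so the proposed family is independent. Combined with the dimension match above, this is the desired fundamental system of $M_A(\0)$.
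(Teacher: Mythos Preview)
Your proof is correct and follows exactly the route the paper intends: the corollary is stated without proof because it is an immediate consequence of Theorem \ref{thm:PBKandQperp} (with $m(\s)=1$ and $P_\cN=P_B$ from $K_\cN=I_\0=\emptyset$), Corollary \ref{cor:Aomoto}, Lemma \ref{lem:Aomoto:exponent}, and the equality $\rank(M_A(\0))=\vol(A)=\binom{l+m-2}{m-1}$. Your injectivity argument via starting terms is sound but in fact redundant, since a spanning set of cardinality $\binom{l+m-2}{m-1}$ for a space of that same dimension is automatically a basis.
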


\begin{proposition}
\begin{equation}
Q_\0^\perp
=
\left\langle
\prod_{k=1}^r
(\x \g^{(i_k,j_k)}_{(i_k+1,j_k+1)})\,\middle|\,
\begin{array}{l}
i_1<i_2<\cdots< i_r\\
j_1>j_2>\cdots > j_r
\end{array}
\right\rangle_\C.
\end{equation}
\end{proposition}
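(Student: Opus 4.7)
The plan is to transport the explicit $\C$-basis of $P_B^\perp$ computed in Corollary \ref{cor:Aomoto} to one of $Q_\0^\perp$ via the ring homomorphism $\Psi_B$ of \eqref{eqn:Psi}, using Theorem \ref{thm:Pperp Qperp}.

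First I would verify that the hypothesis of Theorem \ref{thm:Pperp Qperp} applies here. The chosen set $B=\{\g^{(i,j)}_{(i+1,j+1)}\mid 1\le i<m,\,m+1\le j<m+l\}$ was observed right after its definition to be a basis of $L$; consequently Theorem \ref{thm:Pperp Qperp} yields the equality $\Psi_B(P_B^\perp)=Q_\0^\perp$, and the injectivity of $\Psi_B$ (from the linear independence of $B$) makes $\Psi_B\colon P_B^\perp\to Q_\0^\perp$ a $\C$-linear isomorphism. Any $\C$-basis of the left-hand side is thus carried to a $\C$-basis of the right-hand side.

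Next I would simply compute the images of the basis elements of $P_B^\perp$. By Corollary \ref{cor:Aomoto}, the operators $\partial_{s_{(i_1,j_1)}}\cdots\partial_{s_{(i_r,j_r)}}$ with $i_1<\cdots<i_r$ and $j_1>\cdots>j_r$ form a $\C$-basis of $P_B^\perp$. Since $\Psi_B$ is a ring homomorphism sending each $\partial_{s_{(i,j)}}$ to the linear form $\x\b^{(i,j)}=\x\g^{(i,j)}_{(i+1,j+1)}$, we get
\begin{equation}
\Psi_B\bigl(\partial_{s_{(i_1,j_1)}}\cdots\partial_{s_{(i_r,j_r)}}\bigr)=\prod_{k=1}^r \x\g^{(i_k,j_k)}_{(i_k+1,j_k+1)},
\end{equation}
which matches the proposed generating set verbatim. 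Linear independence of the images is preserved by injectivity, and the dimension count $\binom{l+m-2}{m-1}=\vol(A)$ already matches the holonomic rank, giving an independent consistency check.

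I do not anticipate a real obstacle: all the substantive work (realising $Q_\0^\perp$ as $\Psi_B(P_B^\perp)$, identifying $P_B$ as a monomial ideal in new coordinates, and reading off a standard-monomial basis for $P_B^\perp$) has already been discharged in Theorem \ref{thm:Pperp Qperp}, Proposition \ref{prop:Aomoto:P_B}, and Corollary \ref{cor:Aomoto}. The present proposition is therefore essentially a translation of Corollary \ref{cor:Aomoto} through $\Psi_B$; the only thing to be careful about is that the ordering conventions on the multi-indices $((i_k,j_k))_k$ are preserved verbatim by the homomorphism, which is clear since $\Psi_B$ acts term by term on monomials in $\partial_\s$.
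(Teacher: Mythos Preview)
Your proposal is correct and follows exactly the paper's approach: the paper's proof simply states that the result is immediate from Theorem \ref{thm:Pperp Qperp} and Corollary \ref{cor:Aomoto}, which is precisely the transport of the $P_B^\perp$-basis through $\Psi_B$ that you describe.
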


\begin{proof}
  This is immediate from Theorem \ref{thm:Pperp Qperp} and Corollary \ref{cor:Aomoto}.
\end{proof}

\begin{example}
Let $m=2$. This case corresponds to the Lauricella's $F_D$
(e.g. see \cite[\S 3.1.3]{Aomoto-Kita}).
Then
$\vol(A)=\binom{l+m-2}{m-1}=l$,
and
\begin{equation}
P_B^\perp
=
\langle
1, \partial_{s_{(1,3)}}, \partial_{s_{(1,4)}},\ldots, \partial_{s_{(1,l+1)}}
\rangle_\C.
\end{equation}
\end{example}

\begin{example}
Let $l=m=3$. 
Then
$\vol(A)=\binom{l+m-2}{m-1}=\binom{4}{2}=6$,
and
\begin{equation}
P_B^\perp
=
\langle
1, \partial_{s_{(1,4)}}, \partial_{s_{(1,5)}}, \partial_{s_{(2,4)}}, \partial_{s_{(2,5)}},
\partial_{s_{(1,5)}}\partial_{s_{(2,4)}}
\rangle_\C.
\end{equation}
\end{example}

\section{Lauricella's $F_C$}

Let 
\begin{equation}
A:=\{ \a_i:=\e_0+\e_i,\, \a_{-i}:=\e_0-\e_i\,|\, i=1,2,\ldots, m\}.
\end{equation}
In this case, the $A$-hypergeometric systems correspond to 
Lauricella's $F_C$ \cite{Contiguity}.
Then
\begin{equation}
\Z A=\{
\a\in \Z^{m+1}\,|\, \sum_{i=0}^m a_i\in 2\Z\},
\end{equation}
and
\begin{equation}
L=
\{
\l\in \Z^{2m}\,|\,
\sum_{i=\pm 1,\ldots, \pm m}l_i=0,\,
l_i-l_{-i}=0\ \, (1\leq i\leq m)
\}.
\end{equation}
We have
$\rank(L)=2m-(m+1)=m-1$.

Take a weight $\w$ so that
\begin{equation}
w_1+w_{-1}> w_2+w_{-2}>\cdots> w_m+w_{-m}.
\end{equation}
Then
\begin{equation}
\ini_\w(I_A)
=\langle
\partial_1\partial_{-1},
\partial_2\partial_{-2},
\ldots,
\partial_{m-1}\partial_{-(m-1)}
\rangle,
\end{equation}
and the reduced Gr\"{o}bner basis $G$ is given by
\begin{equation}
G=\{ \g^{(1)}, \g^{(2)}, \ldots, \g^{(m-1)}\},
\end{equation}
where
$\g^{(i)}=\e_i+\e_{-i}-\e_m-\e_{-m}$.
We see that $G$ is a basis of the free $\Z$-module $L$. Set $B:=G$, then $B$ satisfies Assumption \ref{ass3}.
Note that $\supp(B)=\{\pm 1,\ldots,\pm m\}$.
Let $\s=(s_i)_{1\leq i\leq m-1}$ be indeterminates such that $s_i$ corresponds to $\b^{(i)}=\g^{(i)}$.
The standard pairs are
pairs of $*$-place
$\{ \epsilon(i)i\,|\, i\in [1,m-1]\}\cup \{ \pm m\}$
$(\epsilon : [1, m-1]\to \{ \pm 1\})$
and $0$-place its complement.

Hence $\vol(A)=2^{m-1}$.

\begin{lemma}
Let $\bbeta=\0$. 
Then $\v=\0$ is a unique exponent.
\end{lemma}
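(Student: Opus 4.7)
The plan is to mimic, essentially verbatim, the proof of the analogous Lemma \ref{lem:Aomoto:exponent} for the Aomoto--Gel'fand case, since the same two ingredients are available here: a square-free initial ideal and an explicit list of standard pairs whose $*$-indices yield invertible submatrices of $A$.

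First I would invoke the square-free criterion: because
\[
\ini_\w(I_A)=\langle \partial_1\partial_{-1},\ldots,\partial_{m-1}\partial_{-(m-1)}\rangle
\]
is generated by square-free monomials, \cite[Theorem 3.6.6]{SST} guarantees that every fake exponent with respect to $\w$ is in fact an exponent. This reduces the problem to showing that the only fake exponent for $\bbeta=\0$ is $\v=\0$. Equivalently, every exponent $\v$ must be associated with a standard pair $(\a,\sigma)$ satisfying $v_j=0$ for $j\notin\sigma$ and $A\v=\0$.

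Next I would use the description of standard pairs given just before the lemma. Their $*$-sets are precisely
\[
\sigma_\epsilon=\{\epsilon(i)\cdot i\mid i=1,\ldots,m-1\}\cup\{m,-m\},
\]
indexed by sign functions $\epsilon\colon[1,m-1]\to\{\pm 1\}$. The submatrix
$A_{\sigma_\epsilon}$ is $(m+1)\times(m+1)$; its columns are $\a_{\epsilon(i)i}=\e_0+\epsilon(i)\e_i$ for $i=1,\ldots,m-1$ together with $\a_m=\e_0+\e_m$ and $\a_{-m}=\e_0-\e_m$. I would verify invertibility directly: for $i<m$ the column $\a_{\epsilon(i)i}$ is the only column with a nonzero entry in row $i$, so pivoting on rows $1,\ldots,m-1$ decouples these, and the remaining $2\times 2$ block
\[
\begin{bmatrix} 1 & 1 \\ 1 & -1 \end{bmatrix}
\]
in rows $0$ and $m$ (coming from $\a_m,\a_{-m}$) is manifestly nonsingular.

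Finally, from $A\v=\0$ and $v_j=0$ for $j\notin\sigma_\epsilon$, the system reduces to $A_{\sigma_\epsilon}\v_{\sigma_\epsilon}=\0$. The invertibility just established forces $\v_{\sigma_\epsilon}=\0$, hence $\v=\0$, completing the uniqueness. I do not anticipate any real obstacle: the only potential hiccup is establishing that $A_{\sigma_\epsilon}$ is invertible, but the block structure makes this a one-line check, so the proof should be essentially identical in form and length to Lemma \ref{lem:Aomoto:exponent}.
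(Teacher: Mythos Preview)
Your proposal is correct and takes exactly the approach the paper intends: the paper's own proof is the single sentence ``The proof is similar to Lemma \ref{lem:Aomoto:exponent},'' and you have faithfully unpacked that analogy, including the explicit verification that each $A_{\sigma_\epsilon}$ is invertible.
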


\begin{proof}
  The proof is similar to Lemma \ref{lem:Aomoto:exponent}.
\end{proof}

\begin{proposition}\label{prop:LauricellaFC1}
  \begin{equation}
    P_B=\langle (B\s)^{\{\pm i\}}\,|\,i=1,\ldots, m-1\rangle
    =
    \langle
    s_i^2\,|\,
    1\leq i\leq m-1
    \rangle.
  \end{equation}
  \end{proposition}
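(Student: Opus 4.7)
The plan is twofold: first identify the sets $G^{(i)}$ associated to the reduced Gr\"obner basis $\cG$ of $I_A$ with respect to $\w$, and then evaluate the monomials $(B\s)^{G^{(i)}}$ explicitly using the form of $\g^{(i)}$.

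For the first step, I would use that $\v=\0$ and $\bbeta=\0$ force $I_\0=\nsupp(\v)=\emptyset$. For each $i=1,\ldots,m-1$, the vector $-\g^{(i)}=-\e_i-\e_{-i}+\e_m+\e_{-m}$ has negative support $\{i,-i\}$, so by definition
\begin{equation}
G^{(i)}=I_{-\g^{(i)}}\setminus I_\0=\{i,-i\}.
\end{equation}
Substituting into the defining formula \eqref{eqn:P_B_def} of $P_B$ gives the first equality
\begin{equation}
P_B=\left\langle (B\s)^{\{\pm i\}}\,\bigl|\,i=1,\ldots,m-1\right\rangle.
\end{equation}

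For the second step, I would compute $(B\s)_j$ coordinatewise using $\b^{(k)}=\g^{(k)}=\e_k+\e_{-k}-\e_m-\e_{-m}$. For $1\leq i\leq m-1$, the $i$-th and $(-i)$-th coordinates of $\b^{(k)}$ are both $\delta_{ik}$, while the $m$-th and $(-m)$-th coordinates of every $\b^{(k)}$ are $-1$. Hence
\begin{equation}
(B\s)_i=(B\s)_{-i}=s_i\quad (1\leq i\leq m-1),\qquad (B\s)_m=(B\s)_{-m}=-\sum_{k=1}^{m-1}s_k,
\end{equation}
and therefore $(B\s)^{\{i,-i\}}=(B\s)_i\cdot (B\s)_{-i}=s_i^{2}$ for each $i=1,\ldots,m-1$, which yields the second equality.

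Since both steps are purely a matter of unwinding the definitions and substituting the explicit coordinates of $\g^{(i)}$, there is no real obstacle; the only thing to be careful about is keeping the indexing consistent between the coordinates $\{\pm 1,\ldots,\pm m\}$ used for $A$ and the indexing $1,\ldots,m-1$ of the generators $\g^{(i)}$, which is immediate from the product being over the two-element set $\{i,-i\}$.
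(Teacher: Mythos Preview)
Your proof is correct and follows essentially the same approach as the paper: identify $G^{(i)}=\{\pm i\}$ from $\v=\0$ and the form of $\g^{(i)}$, then compute $(B\s)_{\pm i}=s_i$ to obtain $(B\s)^{\{\pm i\}}=s_i^2$. The paper's version is slightly terser, omitting the explicit values of $(B\s)_{\pm m}$ which you compute but do not actually need.
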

  
  \begin{proof}
  We have $G^{(i)}=\{ \pm i\}$ and 
  \begin{equation}
    (B\s)^{\{\pm i\}}=\left(\sum_{\nu=1}^{m-1}s_\nu\g^{(\nu)}_{+i}\right)\left(\sum_{\nu=1}^{m-1}s_\nu\g^{(\nu)}_{-i}\right)=s_i^2.
  \end{equation}
  \end{proof}
\begin{proposition}
  \begin{align}
    P_B^\perp&=\left\langle \partial_\s^\p\,|\,\s^\p\notin \langle
    s_i^2\,|\,
    1\leq i\leq m-1
    \rangle\right\rangle_\C\\
    &=\langle \partial_\s^I=\prod_{i\in I}\partial_{s_i} \,|\, I\subset \{1,\ldots, m-1\}\rangle_\C,
  \end{align}
  and
  \begin{equation}
  Q_\0^\perp=
  \langle
  \prod_{i\in I}
  (\x \g^{(i)})\,|\, I\subset \{1,\ldots,m-1\}\rangle_\C.
  \end{equation}
  Furthermore,
  \begin{equation}
    \dim_\C(Q_\0^\perp)=2^{m-1}.
  \end{equation}
\end{proposition}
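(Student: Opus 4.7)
The proof should fall out almost entirely from the machinery already in place, so the plan is to trace the consequences of Proposition \ref{prop:LauricellaFC1} through Lemma \ref{lem:Basis_of_Pperp} and Theorem \ref{thm:Pperp Qperp}.

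First, observe that by Proposition \ref{prop:LauricellaFC1} the ideal $P_B = \langle s_1^2, \ldots, s_{m-1}^2\rangle$ is a monomial ideal, so for any term order $\prec$ we have $\ini_\prec(P_B) = P_B$ and the given generators already form the reduced Gr\"obner basis. The monomials $\s^\nnu$ not in $\ini_\prec(P_B)$ are therefore precisely the squarefree monomials $\prod_{i \in I} s_i$ with $I \subset \{1,\ldots,m-1\}$. Because the generators are themselves monomials, the normal form of every monomial $\s^\mmu \in \ini_\prec(P_B)$ is zero, which forces $c_{\mmu,\nnu} = 0$ in the construction preceding Lemma \ref{lem:Basis_of_Pperp}. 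Hence the dual basis elements reduce to $q_\nnu(\partial_\s) = \frac{1}{\nnu!}\partial_\s^\nnu$, and Lemma \ref{lem:Basis_of_Pperp} yields
\begin{equation}
P_B^\perp = \left\langle \prod_{i\in I}\partial_{s_i}\,\middle|\, I\subset \{1,\ldots,m-1\}\right\rangle_\C,
\end{equation}
which is the first claim.

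Next, since $B = G$ is a basis of $L$, Theorem \ref{thm:Pperp Qperp} applies and gives $Q_\0^\perp = \Psi_B(P_B^\perp)$. By the definition of $\Psi_B$ in \eqref{eqn:Psi}, for each subset $I \subset \{1,\ldots,m-1\}$ we have
\begin{equation}
\Psi_B\!\left(\prod_{i\in I}\partial_{s_i}\right)(\x) = \prod_{i\in I}(\x B)_i = \prod_{i\in I}(\x\g^{(i)}),
\end{equation}
so the image description is immediate. To confirm that these $2^{m-1}$ polynomials remain linearly independent after applying $\Psi_B$, one invokes the injectivity of $\Psi_B$ noted just after \eqref{eqn:Psi}, which holds because $B$ is linearly independent. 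This simultaneously gives the stated generating set for $Q_\0^\perp$ and the equality $\dim_\C(Q_\0^\perp) = |2^{\{1,\ldots,m-1\}}| = 2^{m-1}$.

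There is no substantive obstacle here; the entire proposition is a cleanup after Proposition \ref{prop:LauricellaFC1}. The only subtlety worth verifying explicitly is the claim that the $q_\nnu$ simplify to bare monomials in $\partial_\s$, which uses the monomial nature of $P_B$, and the injectivity of $\Psi_B$ for the dimension count; both are straightforward and deserve one line each in the write-up. As a sanity check, $\dim_\C(Q_\0^\perp) = 2^{m-1} = \vol(A)$, which matches the holonomic rank of $M_A(\0)$ computed from the standard pairs, confirming that one has obtained a full space of solutions rather than a proper subspace.
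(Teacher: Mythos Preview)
Your proof is correct and follows exactly the route the paper takes: the paper's own proof is the single line ``This is immediate from Lemma \ref{lem:Basis_of_Pperp}, Theorem \ref{thm:Pperp Qperp}, and Proposition \ref{prop:LauricellaFC1},'' and you have simply unpacked what that sentence means.
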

\begin{proof}
  This is immediate from Lemma \ref{lem:Basis_of_Pperp}, Theorem \ref{thm:Pperp Qperp}, and Proposition \ref{prop:LauricellaFC1}.
  \end{proof}

\begin{corollary}
$\{ ((\prod_{i\in I}\partial_{s_i})\bullet F_{\cN}(\x,\s))_{|\s=\0}\,|\, I\subset [1,m-1]\}$
forms a basis of solutions of $M_A(\0)$.
\end{corollary}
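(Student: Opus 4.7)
The plan is to verify the hypotheses of Theorem \ref{thm:PBKandQperp} and then match dimensions with the holonomic rank.

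First I would check that $B=G$ is a basis of $L$, which was already recorded just before the definition of $B$. Second I need to show the identity $P_\cN = m(\s)\cdot P_B$. Since the unique exponent is $\v=\0$, we have $I_\0 = \nsupp(\v) = \emptyset$; as $K_\cN \subset I_\0$ by construction, this forces $K_\cN = \emptyset$, and therefore
\begin{equation}
m(\s) = (B\s)^{I_\0\setminus K_\cN} = (B\s)^{\emptyset} = 1.
\end{equation}
The condition $K_\cN = I_\0$ is precisely the special case of Proposition \ref{prop:Key1}(i) in which one concludes $P_\cN = P_B$, so the required identity $P_\cN = m(\s)\cdot P_B$ holds trivially.

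With both hypotheses in hand, Theorem \ref{thm:PBKandQperp} applies. It confirms that $\v=\0$ is an exponent (which we already know from the preceding lemma) and asserts that
\begin{equation}
\bigl\{\bigl(q(\partial_\s)\bullet\widetilde{F}_\cN(\x,\s)\bigr)_{|\s=\0}\,\bigm|\, q(\partial_\s)\in P_\cN^\perp\bigr\}
\end{equation}
spans the space of series solutions to $M_A(\0)$ in the direction of $\w$ with exponent $\v=\0$. Because $\v=\0$ is the unique exponent by the preceding lemma, this spans the entire solution space of $M_A(\0)$. Substituting $\widetilde{F}_\cN = m(\s)F_\cN = F_\cN$ and running $q(\partial_\s)$ over the basis $\{\partial_\s^I = \prod_{i\in I}\partial_{s_i}\mid I\subset\{1,\ldots,m-1\}\}$ of $P_\cN^\perp = P_B^\perp$ from the previous proposition, we obtain exactly $2^{m-1}$ spanning solutions.

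Finally, to upgrade \emph{spanning} to \emph{basis}, I would invoke the count of the holonomic rank: by the normality remark and $\vol(A) = 2^{m-1}$ noted earlier in this section, the solution space has dimension $2^{m-1}$, which equals the cardinality of our spanning set. Hence the family is automatically linearly independent and forms a basis. There is no substantive obstacle here: the real work lies behind us in Theorem \ref{thm:PBKandQperp} and in the computation of $P_B^\perp$, and the only delicate point to state carefully is the reduction $m(\s)=1$, which is what makes the hypothesis $P_\cN = m(\s)\cdot P_B$ automatic in this example.
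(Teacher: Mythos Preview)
Your argument is essentially the one the paper leaves implicit, and the logical skeleton is right: verify $B$ is a basis of $L$, observe $I_\0=\emptyset$ so $m(\s)=1$ and $K_\cN=I_\0$, conclude $P_\cN=P_B$ via Proposition~\ref{prop:Key1}(i), and invoke Theorem~\ref{thm:PBKandQperp} together with the uniqueness of the exponent.

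There is one genuine slip in your final step. You justify the dimension count by appealing to ``the normality remark and $\vol(A)=2^{m-1}$ noted earlier in this section,'' but Section~6 contains no normality statement; that remark is in Section~5 for the Aomoto--Gel'fand case. So as written, your linear-independence step is unsupported. The cleanest repair avoids the rank count altogether: since $m(\s)=1$, the latter part of Theorem~\ref{thm:PBKandQperp} says the starting term of $\bigl(\partial_\s^I\bullet F_\cN\bigr)_{|\s=\0}$ is $x^\0\,\Psi_B(\partial_\s^I)(\log\x)$, and $\Psi_B$ is injective because $B$ is linearly independent. Hence the $2^{m-1}$ starting terms are linearly independent, so the solutions themselves are linearly independent; combined with the spanning statement you already have, this gives a basis without ever invoking normality or the holonomic rank. (A minor stylistic point: you do not actually need $K_\cN\subset I_\0$ to get $m(\s)=1$; the inclusion $I_\0\setminus K_\cN\subset I_\0=\emptyset$ already suffices.)
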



\end{document}